\documentclass[11pt, a4paper]{amsart}
\pdfoutput=1 
\usepackage[dvips]{graphicx}
\usepackage{amssymb, amstext, amscd, amsmath, color}
\usepackage{epstopdf}
\DeclareGraphicsRule{.tif}{png}{.png}{`convert #1 `dirname #1`/`basename #1 .tif`.png}

\usepackage{tikz,enumerate}
\usepackage{url}
\usepackage{tikz-cd}
\usetikzlibrary{arrows}

\definecolor{cof}{RGB}{219,144,71}
\definecolor{pur}{RGB}{186,146,162}
\definecolor{greeo}{RGB}{91,173,69}
\definecolor{greet}{RGB}{52,111,72}

\usepackage{hhline}
\usepackage{hyperref}

\usepackage{kbordermatrix} % include package @ document preamble
 % change default array delimiters to parentheses

\usepackage{etoolbox}
\let\bbordermatrix\bordermatrix
\patchcmd{\bbordermatrix}{8.75}{4.75}{}{}
\patchcmd{\bbordermatrix}{\left(}{\left[}{}{}
\patchcmd{\bbordermatrix}{\right)}{\right]}{}{}
% "WIDE STYLE"

\textwidth   16.1cm \textheight  22.3cm \topmargin  -0.4in
\oddsidemargin  -0.05in \evensidemargin  -0.05in

%TiKz stuff
\tikzstyle{vertex}=[circle, draw, fill=black, inner sep=0pt, minimum size=4pt]
\tikzstyle{blankvertex}=[circle, draw=white, fill=white, inner sep=0pt, minimum size=4pt]
\tikzstyle{edge}=[line width=1pt]
\tikzstyle{dashedge}=[line width=1.5pt, dashed]

\begin{document}

\newtheorem{theorem}{Theorem}[section]
\newtheorem{corollary}[theorem]{Corollary}
\newtheorem{proposition}[theorem]{Proposition}
\newtheorem{lemma}[theorem]{Lemma}

\theoremstyle{definition}
\newtheorem{remark}[theorem]{Remark}
\newtheorem{definition}[theorem]{Definition}
\newtheorem{example}[theorem]{Example}
\newtheorem{conjecture}[theorem]{Conjecture}

\newcommand{\FFock}{\mathcal{F}}
\newcommand{\kil}{\mathsf{k}}
\newcommand{\Hil}{\mathsf{H}}
\newcommand{\hil}{\mathsf{h}}
\newcommand{\Kil}{\mathsf{K}}
\newcommand{\Real}{\mathbb{R}}
\newcommand{\Rplus}{\Real_+}

%      Blackboard bold letters
\newcommand{\bC}{{\mathbb{C}}}
\newcommand{\bD}{{\mathbb{D}}}
\newcommand{\bN}{{\mathbb{N}}}
\newcommand{\bQ}{{\mathbb{Q}}}
\newcommand{\bR}{{\mathbb{R}}}
\newcommand{\bT}{{\mathbb{T}}}
\newcommand{\bX}{{\mathbb{X}}}
\newcommand{\bZ}{{\mathbb{Z}}}
\newcommand{\bH}{{\mathbb{H}}}
%      Useful shortforms
\newcommand{\BH}{{\B(\H)}}
\newcommand{\bsl}{\setminus}
\newcommand{\ca}{\mathrm{C}^*}
\newcommand{\cstar}{\mathrm{C}^*}
\newcommand{\cenv}{\mathrm{C}^*_{\text{env}}}
\newcommand{\rip}{\rangle}
\newcommand{\ol}{\overline}
\newcommand{\td}{\widetilde}
\newcommand{\wh}{\widehat}
\newcommand{\sot}{\textsc{sot}}
\newcommand{\wot}{\textsc{wot}}
\newcommand{\wotclos}[1]{\ol{#1}^{\textsc{wot}}}
%      Capital script letters
 \newcommand{\A}{{\mathcal{A}}}
 \newcommand{\B}{{\mathcal{B}}}
 \newcommand{\C}{{\mathcal{C}}}
 \newcommand{\D}{{\mathcal{D}}}
 \newcommand{\E}{{\mathcal{E}}}
 \newcommand{\F}{{\mathcal{F}}}
 \newcommand{\G}{{\mathcal{G}}}
\renewcommand{\H}{{\mathcal{H}}}
 \newcommand{\I}{{\mathcal{I}}}
 \newcommand{\J}{{\mathcal{J}}}
 \newcommand{\K}{{\mathcal{K}}}
\renewcommand{\L}{{\mathcal{L}}}
 \newcommand{\M}{{\mathcal{M}}}
 \newcommand{\N}{{\mathcal{N}}}
\renewcommand{\O}{{\mathcal{O}}}
\renewcommand{\P}{{\mathcal{P}}}
 \newcommand{\Q}{{\mathcal{Q}}}
 \newcommand{\R}{{\mathcal{R}}}
\renewcommand{\S}{{\mathcal{S}}}
 \newcommand{\T}{{\mathcal{T}}}
 \newcommand{\U}{{\mathcal{U}}}
 \newcommand{\V}{{\mathcal{V}}}
 \newcommand{\W}{{\mathcal{W}}}
 \newcommand{\X}{{\mathcal{X}}}
 \newcommand{\Y}{{\mathcal{Y}}}
 \newcommand{\Z}{{\mathcal{Z}}}

\newcommand{\supp}{\operatorname{supp}}
\newcommand{\conv}{\operatorname{conv}}
\newcommand{\cone}{\operatorname{cone}}
\newcommand{\vspan}{\operatorname{span}}
\newcommand{\proj}{\operatorname{proj}}
\newcommand{\sgn}{\operatorname{sgn}}
\newcommand{\rank}{\operatorname{rank}}
\newcommand{\Isom}{\operatorname{Isom}}
\newcommand{\qIsom}{\operatorname{q-Isom}}
\newcommand{\Cknet}{{\mathcal{C}_{\text{knet}}}}
\newcommand{\Ckag}{{\mathcal{C}_{\text{kag}}}}
\newcommand{\rind}{\operatorname{r-ind}}
\newcommand{\lind}{\operatorname{r-ind}}
\newcommand{\ind}{\operatorname{ind}}
\newcommand{\coker}{\operatorname{coker}}
\newcommand{\Aut}{\operatorname{Aut}}
\newcommand{\Hom}{\operatorname{Hom}}
\newcommand{\GL}{\operatorname{GL}}
\newcommand{\tr}{\operatorname{tr}}
\newcommand{\Reg}{\operatorname{Reg}}
\newcommand{\End}{\operatorname{end}}

\newcommand{\eqnwithbr}[2]{%
\refstepcounter{equation}
\begin{trivlist}
\item[]#1 \hfill $\displaystyle #2$ \hfill (\theequation)
\end{trivlist}}

\setcounter{tocdepth}{1}

\title[Which graphs are rigid in $\ell_p^d$?]{Which graphs are rigid in $\ell_p^d$?}

\author[S.~Dewar, D.~Kitson and A.~Nixon]{Sean Dewar, Derek Kitson and Anthony Nixon}

\address{Johann Radon Institute of Computational and Applied Mathematics (RICAM)
\\ Austrian Academy of Sciences \\ Linz \\Austria.}
\email{sean.dewar@ricam.oeaw.ac.at}

\address{Dept.\ Math.\ Stats.\\ Lancaster University\\
Lancaster, LA1 4YF \\U.K.}
\address{Dept.\ Math.\ Comp. St.\\Mary Immaculate College, Thurles, Co.~Tipperary, Ireland.}
\email{derek.kitson@mic.ul.ie}

\address{Dept.\ Math.\ Stats.\\ Lancaster University\\
Lancaster LA1 4YF \\U.K.}
\email{a.nixon@lancaster.ac.uk}

\thanks{D.K. supported by the Engineering and Physical Sciences Research Council [grant numbers EP/P01108X/1 and EP/S00940X/1].
S.D. supported by the Austrian Science Fund (FWF): P31888.}

\subjclass[2010]{52C25, 05C50}
\keywords{bar-joint framework, infinitesimal rigidity, rigidity matroid, normed spaces}

\begin{abstract}
We present three results which support the conjecture that a graph is minimally rigid in $d$-dimensional $\ell_p$-space, where $p\in (1,\infty)$ and $p\not=2$, if and only if it is $(d,d)$-tight. Firstly, we introduce a graph bracing operation which preserves independence in the generic rigidity matroid when passing from $\ell_p^d$ to $\ell_p^{d+1}$. 
We then prove that every $(d,d)$-sparse graph with minimum degree at most $d+1$ and maximum degree at most $d+2$ is independent in $\ell_p^d$. Finally, we prove that every triangulation of the projective plane is minimally rigid in $\ell_p^3$. A catalogue of rigidity preserving graph moves is also provided for the more general class of strictly convex and smooth normed spaces and we show that every triangulation of the sphere is independent for 3-dimensional spaces in this class.  
\end{abstract}

\maketitle
\tableofcontents

%%%%%%%%%%%%%%%%%%%%%%%%%%%%%%%%%%%%%%%%%%%%%%%%%%%%%%%%%%%%%%

\section{Introduction}
Triangles, as everyone knows, are structurally rigid in the Euclidean plane, as are tetrahedral frames in Euclidean $3$-space, or the $1$-skeleton of any $d$-simplex in $d$-dimensional Euclidean space. In fact these are examples of minimally rigid structures since the removal of any edge will result in a flexible structure. More generally, one can consider the structural properties of \emph{bar-joint frameworks} obtained by embedding the vertices of a graph $G$ in $\mathbb{R}^d$. Such a framework is \emph{rigid} if the only edge-length-preserving continuous motions of the vertices arise from isometries of $\mathbb{R}^d$. There is a long and abiding theory of rigidity with its origins in both the work of Cauchy on Euclidean polyhedra \cite{Cau}  and the work of Maxwell on stresses and strains in structures \cite{Max}.

Much of the modern theory of rigidity considers a linearisation known as {\em infinitesimal rigidity}, which leads into matroid theory, and concentrates on the generic behaviour of the underlying graph. Standard graph operations such as Henneberg moves and vertex splitting moves \cite{NR} provide a means of constructing further rigid structures in a fixed dimension $d$, whereas the coning operation applied to a rigid $d$-dimensional structure produces a rigid structure one dimension higher \cite{whi83}. 

But what happens if the underlying Euclidean metric is changed? An illustrative example is the observation by Cook, Lovett and Morgan \cite{clm} that in any non-Euclidean normed plane a rhombus with generic diagonal lengths cannot be fully rotated whilst maintaining the distances between the corners.
The study of rigidity for graphs placed in non-Euclidean finite dimensional normed spaces was initiated by Kitson and Power \cite{kit-pow} (see also \cite{dew2,dew1,Ki} eg.). These works include the fundamental result, analogous to the Geiringer-Laman theorem for the Euclidean plane \cite{laman,pol}, that the minimally rigid graphs in dimension 2 are exactly those that decompose into the edge-disjoint union of two spanning trees.

Throughout this article we consider $d$-dimensional $\ell_p$-space (denoted $\ell_p^d$), where $p\in(1,\infty)$ and $p\not=2$, and occasionally the more general class of strictly convex and smooth normed spaces. 
In Section \ref{s:rigidity}, we provide some necessary background material and present the sparsity conjecture (Conjecture \ref{con:d-dim}) which is our main motivation for the sections that follow. Our first main result is in Section \ref{s:DH} where we provide a tight analogue of coning, which we term {\em bracing}, to transfer rigidity from $\ell_p^d$ to $\ell_p^{d+1}$ for certain complete graphs (Theorem \ref{t:bracing}). Using this, we show that for a non-Euclidean $\ell_p^d$-space with $p \in (1,\infty)$ and $p \neq 2$, the analogue of a $d$-simplex is the complete graph on $2d$ vertices, in the sense that it is minimally rigid for $\ell_p^d$ and there is no smaller graph with this property. In Section \ref{sec:ops}, we present several simple construction moves for generating new rigid structures from existing ones in a strictly convex and smooth space. 

Our second main result concerns independence which, as in the Euclidean case, is characterised by the rigidity matrix (defined below) having full rank. Analogous to a result for Euclidean frameworks due to Jackson and Jord\'{a}n \cite{JJbounded}, we obtain a result showing independence in  smooth non-Euclidean $\ell_p$-spaces for graphs of bounded degree (Theorem \ref{thm:main}). 

Our final main result concerns the rigidity of triangulated surfaces in dimension 3. It is well-known that the graph of a triangulated sphere is minimally rigid in the Euclidean space $\ell_2^3$ and that, in general, triangulations of closed surfaces are generically rigid in $\ell_2^3$ (see \cite{Fog,Glu} eg). It follows from Euler's formula that if $G=(V,E)$ is a triangulation of the sphere then $|E|=3|V|-6$ while if $G$ is a triangulation of any closed surface of orientable genus $>0$ then $|E|\geq 3|V|$. A graph which is minimally rigid for a non-Euclidean $\ell_p^3$-space must satisfy $|E|=3|V|-3$ and so such triangulations are clearly either underbraced or overbraced for $\ell_p^3$. Triangulations of the projective plane, on the other hand, do satisfy the necessary counting condition for minimal rigidity in non-Euclidean $\ell_p^3$-spaces and we prove that these triangulations are indeed minimally rigid (Theorem \ref{thm:projplane}).

%%%%%%%%%%%%%%%%%%%%%%%%%%%%%%%%%%%%%%

\section{Rigidity in \texorpdfstring{$\ell_p^d$}{lp}}
\label{s:rigidity}
Let $X$ be a finite dimensional real normed linear space and let $X^*$ denote the dual space of $X$. Let $G=(V,E)$ be a finite simple graph with vertex set $V$, and consider a point $p=(p_v)_{v\in V}\in X^{V}$ such that the components $p_v$ and $p_w$ are distinct for each edge $vw\in E$. We refer to $p$ as a {\em placement} of the vertices of $G$ in $X$. The pair $(G,p)$ is referred to as a {\em bar-joint framework} in $X$. 

A linear functional $f: X \rightarrow \mathbb{R}$ \emph{supports} a non-zero point $x_0 \in X$ if $f(x_0) = \|x_0\|^2$ and $\sup_{\|x\|\leq 1}\,|f(x)|=\|x_0\|$; if exactly one linear functional supports a non-zero point $x_0$ then we say $x_0$ is \emph{smooth} and define $\varphi_{x_0}$ to be the unique support functional for $x_0$. A space $X$ is said to be \emph{smooth} if every non-zero point in $X$ is smooth. A space $X$ is said to be \emph{strictly convex} if $\|x+y\|<\|x\|+\|y\|$ whenever $x,y\in X$ are non-zero and $x$ is not a scalar multiple of $y$ (or equivalently, if the closed unit ball in $X$ is strictly convex). We will make use of the following elementary facts (see for example   \cite[Part III]{beauzamy} and \cite[Ch.~II]{cio} for a general treatment of these topics).

\begin{lemma}
\label{l:smu}
Let $X$ be a finite dimensional normed linear space and let $\mathcal{S}(X)$ denote the set of all smooth points in $X$ together with the point $0 \in X$.
Define $\Gamma : \mathcal{S}(X) \rightarrow X^*$ by setting $\Gamma(x) = \varphi_x$ and $\Gamma(0)=0$. Then,
\begin{enumerate}[(i)]
\item \label{l:smu1} $\Gamma$ is continuous,
\item \label{l:smu2} $X$ is strictly convex if and only if $\Gamma$ is injective,
\item \label{l:smu3} $X$ is smooth if and only if $\Gamma$ is surjective, and,
\item \label{l:smu4} $X$ is both strictly convex and smooth if and only if $\Gamma:X \rightarrow X^*$ is a homeomorphism.
\end{enumerate}
\end{lemma}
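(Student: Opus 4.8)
The plan is to establish the four items more or less independently, importing only one genuinely convex-geometric fact from the cited sources.

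For part \ref{l:smu1} I would argue by compactness in the finite-dimensional space $X^*$. Given $x_n\to x$ in $\mathcal S(X)$, the functionals $\varphi_{x_n}$ satisfy $\|\varphi_{x_n}\|=\|x_n\|$, so they lie in a bounded, hence relatively compact, set; if $\psi$ is any subsequential limit, then $\psi(x)=\lim\varphi_{x_n}(x_n)=\lim\|x_n\|^2=\|x\|^2$ and $\|\psi\|=\lim\|x_n\|=\|x\|$, so $\psi$ supports $x$. When $x=0$ this forces $\psi=0=\Gamma(0)$; when $x\neq0$, smoothness of $x$ forces $\psi=\varphi_x=\Gamma(x)$. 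Since every subsequence of $(\varphi_{x_n})$ has a further subsequence converging to $\Gamma(x)$, the whole sequence converges to $\Gamma(x)$.

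The forward implication of \ref{l:smu2} is direct: if $\varphi_x=\varphi_y=f\neq0$ then $\|x\|=\|y\|=\|f\|=:r>0$, and $f(x+y)=2r^2\le r\|x+y\|$ gives $\|x+y\|=\|x\|+\|y\|$, whence strict convexity yields $x=\lambda y$ with $\lambda\ge0$, and $\|x\|=\|y\|$ forces $x=y$ (the case $f=0$ gives $x=y=0$ at once). For \ref{l:smu3}, if $X$ is smooth then any $f\neq0$ attains $\|f\|$ at some unit vector $y$ by compactness of the ball, and $x:=\|f\|\,y$ satisfies $f(x)=\|x\|^2$ and $\|f\|=\|x\|$, so $f$ is the unique support functional of $x$ and $\Gamma(x)=f$; hence $\Gamma$ is onto. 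Conversely, if $x_0$ is non-smooth with support functionals $f_1\neq f_2$, then $g:=\tfrac12(f_1+f_2)$ has $\|g\|=\|x_0\|$, and any smooth $z$ with $\Gamma(z)=g$ would force $f_1(z)=f_2(z)=\|z\|^2$ (using $\|f_i\|=\|x_0\|=\|z\|$), so that each $f_i$ supports $z$, contradicting smoothness of $z$; thus $g\notin\Gamma(\mathcal S(X))$ and $\Gamma$ is not onto.

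The reverse implication of \ref{l:smu2} is the crux, and the one place where I expect to lean on the literature. First, the same midpoint estimate yields the duality ``$X$ strictly convex $\iff X^*$ smooth'', so if $X$ fails to be strictly convex there is a non-smooth point $f_0$ of $X^*$, and the face $F$ of the unit ball of $X$ exposed by $f_0$ satisfies $\dim F\ge1$. The delicate step is to extract from $F$ two distinct \emph{smooth} points of the ball with a common support functional. For this I would use the conjugate-face correspondence between the unit balls of $X$ and $X^*$: the conjugate face $G$ of $F$ is an exposed face of the dual ball, and choosing an exposed point $g_0$ of $G$ — which is then an exposed point of the dual ball, since an exposed face of an exposed face is exposed — gives a point whose conjugate face $F_{g_0}$ in the ball of $X$ has $\dim F_{g_0}\ge\dim F\ge1$ while its own conjugate face is just $\{g_0\}$. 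Then every point of the relative interior of $F_{g_0}$ — and there are at least two — is a smooth point with support functional $g_0$, so $\Gamma$ is not injective. The facts about faces of finite-dimensional convex bodies used here are exactly those supplied by \cite{beauzamy,cio}; marshalling them correctly is the main obstacle, the other three items being essentially formal.

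Part \ref{l:smu4} then follows by assembly. If $X$ is strictly convex and smooth, parts \ref{l:smu2} and \ref{l:smu3} make $\Gamma\colon X\to X^*$ a continuous bijection; since $X^*$ is then also strictly convex and smooth, the analogous map $\Gamma_{X^*}\colon X^*\to X^{**}=X$ (built from $X^*$ in place of $X$) is defined on all of $X^*$ and continuous by part \ref{l:smu1}, and it coincides with $\Gamma^{-1}$ because the defining relations ($g(x)=\|g\|^2$, $\|x\|=\|g\|$ versus $g(x)=\|x\|^2$, $\|g\|=\|x\|$) are the same. Conversely, a homeomorphism $\Gamma\colon X\to X^*$ is in particular a bijection, so $X$ is smooth by part \ref{l:smu3} (surjectivity) and strictly convex by part \ref{l:smu2} (injectivity).
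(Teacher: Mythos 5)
The paper itself gives no proof of this lemma (it simply points to Beauzamy and Cioranescu), so your argument has to stand on its own. Most of it does: your compactness proof of (i) is correct; your forward implication in (ii) and both implications in (iii) are correct (the averaging argument for the converse of (iii) is exactly right); and (iv) is fine, though you do not need the hard direction of (ii) there — if $\Gamma:X\to X^*$ is a homeomorphism then $\mathcal{S}(X)=X$, so $X$ is smooth, and injectivity then gives strict convexity immediately, because all points of a segment in the unit sphere are smooth and any functional norming its midpoint norms its endpoints.

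The genuine gap is in the converse of (ii), and it sits exactly where you flagged it. The claim ``an exposed face of an exposed face is exposed'' is false: for the convex hull of a disc and an exterior point in the plane, the tangent segment is an exposed face whose endpoint on the circle is an exposed point of the segment but not an exposed point of the hull. Worse, an exposed face need not contain \emph{any} exposed point of the body: for the convex hull of a torus of revolution, the flat top disc is an exposed face whose extreme points (the top circle) are smooth, non-exposed points of the hull. Your argument needs an exposed point $g_0$ of $B_{X^*}$ inside the conjugate face $G$ (this is what forces $\{g_0\}''=\{g_0\}$ and hence smoothness of the relative interior of $F_{g_0}$), and these examples show that step cannot be completed as written. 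Moreover the obstruction is not merely technical: the ``if'' direction of (ii) appears to fail in dimension $3$ without extra hypotheses. Take $K\geq 400$ and let $B=\{(x,y,z): |x|\le 1.05,\ |z|\le t(x,y)\}$ with $t(x,y)=1-|y|-y^2\bigl(1+\tfrac{x^2}{4}\bigr)-K\bigl((|x|-1)_+\bigr)^2$; one checks $t$ is concave on this slab, so $B$ is a symmetric convex body, its boundary contains the ``ridge'' segments $[-1,1]\times\{0\}\times\{\pm 1\}$, every point of which is non-smooth (the planes $z=\pm(1+\beta y)$, $|\beta|\le 1$, all support $B$ there), and these are the \emph{only} segments in $\partial B$ (an affine segment of $t$ forces each convex summand to be affine, which pins it to the ridge, and similarly for the equator). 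Since two distinct smooth points with a common support functional always span a boundary segment with smooth endpoints, $\Gamma$ is injective for this $X$ although $X$ is not strictly convex. So no repair of your face-theoretic argument can establish (ii) in the stated generality; the implication is safe only with an additional hypothesis such as smoothness of $X$ — which is the only situation the paper actually uses, via parts (i) and (iv).
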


\subsection{Configuration spaces}
Two bar-joint frameworks $(G,p)$ and $(G,p')$ in $X$ are said to be {\em equivalent} if $\|p_v-p_w\|= \|p_v'-p_w'\|$ for each edge $vw\in E$.
The {\em configuration space} for $(G,p)$, denoted $\C(G,p)$, consists of all placements $p'\in X^{V}$ such that $(G,p')$ is equivalent to $(G,p)$.
The configuration space will always contain all translations of $p$,
however rotations and reflections of $p$ are not guaranteed to be contained in $\C(G,p)$,
as such operations do not always preserve distance in general normed spaces.
We can alternatively express the configuration space in terms of the {\em rigidity map}, 
\[f_G:X^{V}\to \bR^{E},\,\,\,\,\,\, (x_v)_{v\in V}\mapsto (\|x_v-x_w\|)_{vw\in E},\]
where we note that $\C(G,p) = f_G^{-1}(f_G(p))$.

Remembering that an isometry is a map from $X$ to itself that preserves the distance between points with respect to the norm of $X$,
a pair of frameworks $(G,p)$ and $(G,p')$ are said to be {\em isometric} if there exists an isometry $T:X\to X$ such that $p_v=T(p'_v)$ for all $v\in V$.
It is immediate that any two isometric frameworks will be equivalent,
but the converse is not true in general.
The set of placements $p'\in X^{V}$ such that $(G,p')$ is isometric to $(G,p)$ is denoted $\O_p$ (note this set depends only on $p$).
It can be shown (see \cite[Lemma 3.4]{dew1} for example) that $\O_p$ is a smooth submanifold of $X^{V}$.

\begin{remark}
If $X$ is the standard Euclidean $d$-space,
then a pair $(G,p)$ and $(G,p')$ are isometric if and only if the frameworks $(K,p)$ and $(K,p')$ are equivalent, where $K$ denotes the complete graph on the vertex set of $G$.
This is not however true in general for non-Euclidean normed spaces;
see \cite[Section 5]{dew1} for more discussion surrounding the topic.
\end{remark}

\subsection{The rigidity matrix}
Suppose $(G,p)$ is a bar-joint framework in a normed space $X$ with the property that $p_v-p_w$ is smooth in $X$ for each edge $vw\in E(G)$. Such placements $p$ are said to be {\em well-positioned} in $X$.
Given a basis $b_1,\ldots,b_d$ for $X$, the {\em rigidity matrix} for $(G,p)$ is a matrix $R(G,p) = (r_{e,(v,k)})$, with rows indexed by $E$ and columns indexed by $V\times\{1,\ldots,d\}$. 
The entries are defined as follows;
\[r_{e,(v,k)}=\left\{\begin{array}{cc}
\varphi_{p_v-p_w}(b_k) & \mbox{ if } e=vw,  \\
0 & \mbox{ otherwise}.
\end{array}
\right.\]

If the rank of $R(G,p)$ is maximal with respect to the set of all well-positioned placements of $G$ in $X$ then $(G,p)$ is said to be a \emph{regular} bar-joint framework.
If the rigidity matrix $R(G,p)$ has independent rows then $(G,p)$ is said to be {\em independent} in $X$.

\begin{remark}
Note that if the set $\S(X)$ of smooth points in a normed space $X$ is open then the set $\Reg(G;X)$ of regular placements of a graph $G=(V,E)$ in $X$ is an open subset of $X^V$. This follows immediately from Lemma \ref{l:smu}(i) and the fact that the rank function is lower semicontinuous.
\end{remark}

\subsection{Framework rigidity}
A regular bar-joint framework $(G,p)$ is  {\em rigid} in $X$ if the equivalent conditions of Proposition \ref{p:rigid} are satisfied.
\begin{proposition}{\cite[Theorem 1.1]{dew1}}
\label{p:rigid}
Let $(G,p)$ be a regular bar-joint framework in a finite dimensional real normed linear space $X$.
If $\S(X)$ is an open subset of $X$ then the following statements are equivalent.
\begin{enumerate}[(i)]
\item If $\gamma:[0,1]\to \C(G,p)$ is a continuous path with $\gamma(0)=p$ and $\gamma(1)=p'$ then $(G,p)$ and $(G,p')$ are isometric.
\item There exists an open neighbourhood $U$ of $p$ in $\C(G,p)$ such that if $p'\in U$ then $(G,p)$ and $(G,p')$ are isometric.
\item $\rank R(G,p) = d|V|-\dim \T(p)$, where $\T(p)$ denotes the tangent space of the smooth manifold $\O_p$ at $p$.  
\end{enumerate}
\end{proposition}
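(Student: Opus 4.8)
The plan is to analyse the three conditions through the rigidity map $f_G\colon X^V\to\bR^E$, $x\mapsto(\|x_v-x_w\|)_{vw\in E}$, and its differential. The first step is to show that $f_G$ is continuously differentiable near a well-positioned placement $p$ and to identify its Jacobian. Each coordinate $x\mapsto\|x_v-x_w\|$ is differentiable at $p$ precisely because $p_v-p_w$ is smooth (uniqueness of the support functional is differentiability of the norm at that point), and Lemma~\ref{l:smu}(i) makes the derivative depend continuously on $x$ on a neighbourhood of $p$ (here one uses that $\S(X)$ is open, so nearby differences are still smooth). A short computation gives $df_G(p)=D\,R(G,p)$, where $D$ is the invertible diagonal matrix with entries $\|p_v-p_w\|^{-1}$; hence $\ker df_G(p)=\ker R(G,p)$ and $\rank df_G(p)=\rank R(G,p)$. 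Since isometries of $X$ are affine with norm-preserving linear part (Mazur--Ulam), $\O_p\subseteq\C(G,p)=f_G^{-1}(f_G(p))$, and because $f_G$ is constant on the smooth manifold $\O_p$ we get $\T(p)\subseteq\ker R(G,p)$. By rank--nullity, condition (iii) is therefore equivalent to the equality $\T(p)=\ker R(G,p)$.

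The second step is the local manifold picture at the regular point $p$. Regularity means $\rank R(G,\cdot)$ is maximised at $p$; by the Remark above (again using that $\S(X)$ is open) this maximum is attained on a whole neighbourhood of $p$, so $\rank df_G$ is locally constant there. The constant rank theorem then shows that, near $p$, the configuration space $\C(G,p)=f_G^{-1}(f_G(p))$ is an embedded $C^1$-submanifold $M\subseteq X^V$ with tangent space $\ker R(G,p)$ at $p$ and $\dim M=d|V|-\rank R(G,p)$.

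With this in hand the three implications follow the Asimow--Roth template. For (iii)$\Rightarrow$(ii): under (iii) the submanifold $\O_p$ of $M$ has full dimension, so it is open in $M$ near $p$, and any such open set serves as the neighbourhood $U$ in~(ii). For (ii)$\Rightarrow$(i): given a continuous $\gamma\colon[0,1]\to\C(G,p)$ with $\gamma(0)=p$, set $S=\{t:\gamma(t)\in\O_p\}$. This set is closed because $\O_p$ is closed in $X^V$ (fixing a base vertex $v_0$, the tuples $(p'_v-p'_{v_0})_v$ run over the compact orbit of a finite set under the compact group of linear isometries, while $p'_{v_0}$ is unconstrained), and it is open because (ii) transfers from $p$ to any isometric placement $q=T\circ p$: the isometry $T$ induces a homeomorphism of $\C(G,p)$ onto itself that preserves $\O_p$, so it carries the neighbourhood $U$ of $p$ to a neighbourhood of $q$ contained in $\O_p$. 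Connectedness of $[0,1]$ gives $S=[0,1]$, hence $\gamma(1)\in\O_p$. Finally (i)$\Rightarrow$(iii) by contraposition: if (iii) fails then $\dim\O_p=\dim\T(p)<\dim\ker R(G,p)=\dim M$, so the closed subset $\O_p\cap M$ is nowhere dense in $M$; picking $p'\in M\setminus\O_p$ close enough to $p$ to lie in its path-component in $M$ and joining it to $p$ within $M\subseteq\C(G,p)$ produces a continuous flex ending at a placement not isometric to $p$, contradicting (i).

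The main obstacle I anticipate is the analytic rather than the combinatorial content: one must know that smoothness of $p_v-p_w$ genuinely yields (continuous) differentiability of the norm at that point, and that $\rank R(G,\cdot)$ is locally constant at a regular placement so that the constant rank theorem applies — both of which rely on the standing hypothesis that $\S(X)$ is open and on the continuity of $\Gamma$ from Lemma~\ref{l:smu}(i). A secondary point to record carefully is the standard topological fact that an embedded submanifold of $M$ of the same dimension as $M$ is open in $M$, while one of smaller dimension is nowhere dense.
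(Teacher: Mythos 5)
This proposition is not proved in the paper at all: it is imported verbatim from \cite[Theorem 1.1]{dew1}, so there is no in-paper proof to compare against. Your argument is correct and follows essentially the same Asimow--Roth-style route used in that reference (differentiability of $f_G$ at well-positioned placements via the support functionals, local constancy of rank at regular points, the constant rank theorem identifying $\C(G,p)$ locally as a manifold with tangent space $\ker R(G,p)$, and the open/closed path argument through the closed submanifold $\O_p$), so nothing further is needed.
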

If a  bar-joint framework $(G,p)$ is both rigid and independent then it is said to be {\em minimally rigid} in $X$. A graph $G=(V,E)$ is said to be {\em independent} (respectively, {\em minimally rigid} or \emph{rigid}) in $X$  if there exists a placement $p \in X^{V}$ such that the pair $(G,p)$ is an independent (respectively, minimally rigid or rigid) bar-joint framework in $X$.

\subsection{Frameworks in $\ell^d_q$} 

\begin{remark}
	In the area of functional analysis,
	the variable $p$ is classically used to discuss $\ell_p$ spaces.
	This conflicts, however, with the standard notation for rigidity theory,
	where $p$ is usually used to denote a placement of a framework.
	To remove any ambiguity,
	we will from now on opt for $\ell_q^d$ spaces instead of $\ell_p^d$,
	and will retain $p$ for referring solely to placements.
\end{remark}

Let $\ell^d_q$ denote the $d$-dimensional vector space $\bR^d$ together with the norm 
$\|(x_1,\ldots,x_d)\|_q :=  (\sum_{k=1}^d |x_k|^q )^{\frac{1}{q}}$ 
where $d\geq1$ and $q\in (1,\infty)$. 
With respect to the usual basis on $\bR^d$, the rigidity matrix $R(G,p)$ for a bar-joint framework $(G,p)$ in $\ell^d_q$ has entries,
\[r_{e,(v,k)}=\left\{\begin{array}{cc}
\frac{\left[(p_v-p_w)^{(q-1)}\right]_k}{\|p_v-p_w\|_q^{q-2}} & \mbox{ if } e=vw, \\
0 & \mbox{ otherwise}.
\end{array}
\right.\]
Here, for convenience, we use the notation $x^{(q)} := (\sgn(x_1)|x_1|^q,\ldots,\sgn(x_d)|x_d|^q)$ and $[x]_k := x_k$ for each $x=(x_1,\ldots,x_d)\in \bR^d$.  
Note that by scaling each row of the rigidity matrix by the appropriate value $\|p_v-p_w\|_q^{q-2}$ we obtain an equivalent matrix $\tilde{R}(G,p)$ with entries,
\[r_{e,(v,k)}=\left\{\begin{array}{cc}
\left[(p_v-p_w)^{(q-1)}\right]_k & \mbox{ if } e=vw, \\
0 & \mbox{ otherwise}.
\end{array}
\right.\]
We refer to $\tilde{R}(G,p)$ as the {\em altered rigidity matrix} for $(G,p)$. 
It can be shown (see \cite[Lemma 2.3]{kit-pow}) that if $q\not=2$ then  $\dim \T(p) = d$. Thus, for $q\not=2$, a regular bar-joint framework $(G,p)$ in $\ell^d_q$ is rigid if and only if $\rank R(G,p) = d|V|-d$.

\begin{example}\label{ex:wheel}
Let $G$ be the wheel graph on vertices $V=\{v_0,v_1,v_2,v_3,v_4\}$ with center $v_0$ and let $q\in (1,\infty)$. Define $p$ to be the placement of $G$ in $\ell_q^2$ where,
\begin{align*}
p_{v_0}= (0,0), \quad p_{v_1}= (-1,0), \quad p_{v_2}= (0,1), \quad p_{v_3}= (1,0), \quad p_{v_4}= (1,-1).
\end{align*}
See left hand side of Figure \ref{fig:wheel}  for an illustration. The altered rigidity matrix $\tilde{R}(G,p)$ is as follows,
\[\kbordermatrix{
 &(v_0,1)&(v_0,2)&(v_1,1)&(v_1,2)&(v_2,1)&(v_2,2)&(v_3,1)&(v_3,2)&(v_4,1)&(v_4,2)\\
v_0v_1 &1&0&-1&0&0&0&0&0&0&0\\
v_0v_2 &0&-1&0&0&0&1&0&0&0&0\\
v_0v_3 &-1&0&0&0&0&0&1&0&0&0\\
v_0v_4 &-1&1&0&0&0&0&0&0&1&-1\\
v_1v_2 &0&0&-1&-1&1&1&0&0&0&0\\
v_2v_3 &0&0&0&0&-1&1&1&-1&0&0\\
v_3v_4 &0&0&0&0&0&0&0&1&0&-1\\
v_1v_4 &0&0&-2^{q-1}&1&0&0&0&0&2^{q-1}&-1\\
}.\]
Let $M$ be the $8 \times 8$ matrix formed by the first $8$ columns. We compute $\det M = 2^{q-1}-2$ and so, for $q\neq 2$, $\rank \tilde{R}(G,p) = 8 = 2|V|-2$.
Thus $(G,p)$ is regular and minimally rigid in $\ell_q^2$ for all $q \neq 2$.
Note that if we instead set $p_{v_4}=(0,-1)$ then the resulting bar-joint framework  is non-regular in $\ell_q^2$ for all $q \neq 2$ (see right hand side of Figure \ref{fig:wheel}).
\end{example}

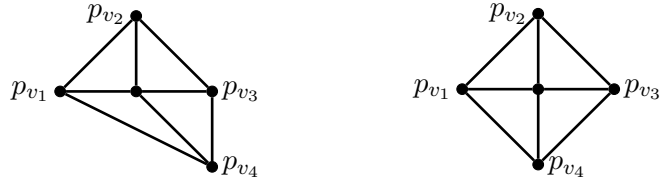
\begin{figure}[ht]
	\begin{tikzpicture}
		\node[vertex] (1) at (-1,0) {};
		\node[vertex] (2) at (0,1) {};
		\node[vertex] (3) at (1,0) {};
		\node[vertex] (4) at (1,-1) {};
		\node[vertex] (0) at (0,0) {};

		\node [left] at (-1,0) {$p_{v_1}$};
		\node [left] at (0,1.02) {$p_{v_2}$};
		\node [right] at (1,0) {$p_{v_3}$};
		\node [right] at (1,-1) {$p_{v_4}$};
		%\node [above right] at (0,0) {$p_{v_0}$};

		\draw[edge] (1)edge(2);
		\draw[edge] (2)edge(3);
		\draw[edge] (1)edge(4);
		\draw[edge] (3)edge(4);
		
		\draw[edge] (0)edge(1);
		\draw[edge] (0)edge(2);
		\draw[edge] (0)edge(3);
		\draw[edge] (0)edge(4);		
	\end{tikzpicture}
	\hspace{15mm}
		\begin{tikzpicture}
		\node[vertex] (1) at (-1,0) {};
		\node[vertex] (2) at (0,1) {};
		\node[vertex] (3) at (1,0) {};
		\node[vertex] (4) at (0,-1) {};
		\node[vertex] (0) at (0,0) {};
		
		\node [left] at (-1,0) {$p_{v_1}$};
		\node [left] at (0,1) {$p_{v_2}$};
		\node [right] at (1,0) {$p_{v_3}$};
		\node [right] at (0,-1.03) {$p_{v_4}$};
		
		\draw[edge] (1)edge(2);
		\draw[edge] (2)edge(3);
		\draw[edge] (1)edge(4);
		\draw[edge] (3)edge(4);
		
		\draw[edge] (0)edge(1);
		\draw[edge] (0)edge(2);
		\draw[edge] (0)edge(3);
		\draw[edge] (0)edge(4);		
	\end{tikzpicture}
	\caption{A bar-joint framework in $\ell_q^2$ which is regular and minimally rigid (left) and a bar-joint framework which is non-regular (right), for $q\in(1,\infty)$, $q\not=2$. }
	\label{fig:wheel}
\end{figure}

\subsection{The sparsity conjecture}
Given a graph $G=(V,E)$ and $d\geq 1$ we write $f_d(G) = d|V|-|E|$. 
We say $G$ is  \emph{$(d,d)$-sparse} if $f_d(H) \geq d$ for all subgraphs $H \subset G$.
If $G$ is $(d,d)$-sparse and $f_d(G) = d$ then $G$ is said to be \emph{$(d,d)$-tight}.

\begin{conjecture}\label{con:d-dim}
Let $q\in(1,\infty)$, $q\not=2$, and let $d\geq 1$. 
A graph $G$ is independent in $\ell_q^d$ if and only if $G$ is $(d,d)$-sparse.
\end{conjecture}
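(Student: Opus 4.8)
One direction is elementary: if $G$ is independent in $\ell_q^d$ then $G$ is $(d,d)$-sparse. Fix a well-positioned $p$ for which $R(G,p)$ has independent rows. For any subgraph $H\subseteq G$ the restriction $p|_{V(H)}$ is again well-positioned and $R(H,p|_{V(H)})$ is a row-submatrix of $R(G,p)$, hence also has independent rows. Since $R(H,p|_{V(H)})$ is, up to a positive scaling of its rows, the Jacobian of the rigidity map $f_H$ at $p|_{V(H)}$, its kernel contains the tangent space $\T(p|_{V(H)})$ of the isometry orbit $\O_{p|_{V(H)}}$, which has dimension $d$ since $q\neq 2$; so $\rank R(H,p|_{V(H)})\leq d|V(H)|-d$, and independence of the rows forces $|E(H)|\leq d|V(H)|-d$, i.e.\ $f_d(H)\geq d$. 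Thus $G$ is $(d,d)$-sparse, and it remains to prove the converse, which is the open part.

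\textbf{Reduction to tight graphs.} For the converse it suffices to show that every $(d,d)$-\emph{tight} graph is independent in $\ell_q^d$. Indeed, for each finite vertex set the $(d,d)$-sparse subgraphs of the complete graph are the independent sets of a matroid on its edge set (here $d\leq 2d-1$), and a standard argument — adjoin isolated vertices until at least $2d$ are present and then add edges greedily while preserving sparsity — shows that every $(d,d)$-sparse graph is a spanning subgraph of some $(d,d)$-tight graph $T$. If $T$ is independent in $\ell_q^d$, witnessed by a well-positioned $p$, then $R(G,p)$ is a row-submatrix of $R(T,p)$ and so $G$ is independent as well.

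\textbf{An inductive construction.} I would prove that $(d,d)$-tight graphs are independent by induction on $|V|$, via a recursive construction paired with a catalogue of independence-preserving moves. The target is a statement of the form: \emph{every $(d,d)$-tight graph is obtained from $K_{2d}$ (and possibly finitely many further small tight graphs) by a sequence of moves, each preserving generic $\ell_q^d$-independence}, the natural candidates being the $0$-extension (adjoin a vertex of degree $d$), the $1$-extension (delete an edge $uv$ and adjoin a vertex of degree $d+1$ adjacent to $u$, $v$, and $d-1$ further vertices), vertex-splitting moves, and the gluing moves catalogued in Section~\ref{sec:ops}. The base case $K_{2d}$ is independent in $\ell_q^d$ by Section~\ref{s:DH}: the single edge is trivially independent in $\ell_q^1$, and $d-1$ applications of the bracing operation of Theorem~\ref{t:bracing} produce the $1$-skeleton of the $d$-dimensional cross-polytope, which is $K_{2d}$. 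Checking that a move preserves independence is a local rank computation on $R(G,p)$ in the spirit of Example~\ref{ex:wheel}: for the $0$-extension, a generic placement of the new vertex $z$ gives $d$ new rows confined to $d$ fresh columns with $d\times d$ block $\big([(p_z-p_{u_i})^{(q-1)}]_k\big)_{i,k}$, whose determinant is a non-trivial real-analytic function of $p_z$ (because $y\mapsto y^{(q-1)}$ is a homeomorphism of $\bR^d$ that is real-analytic off the coordinate hyperplanes) and hence non-zero generically; for the $1$-extension one adapts the classical Euclidean argument by first placing $z$ on the segment $[p_u,p_v]$, so that $p_z-p_u$ and $p_v-p_z$ are positive multiples of $p_v-p_u$ and hence (by smoothness, cf.\ Lemma~\ref{l:smu}) have support functionals that are positive multiples of $\varphi_{p_v-p_u}$, whereupon a suitable combination of the rows $zu$ and $zv$ reproduces the deleted row $uv$ while the remaining $d-1$ edges at $z$ and a perturbation of $z$ off the segment supply the missing rank; the limiting step is justified by continuity of $\Gamma$ (Lemma~\ref{l:smu}).

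\textbf{The main obstacle.} The genuine difficulty — and the reason Conjecture~\ref{con:d-dim} is still a conjecture — is the combinatorial step: for $d\geq 3$ there is no known recursive construction of all $(d,d)$-tight graphs from a bounded list of base graphs using only independence-preserving moves, a situation parallel to (though formally independent of) the long-standing open problem of recursively generating the generically rigid graphs in Euclidean $d$-space. For $d=1$ (spanning trees) and $d=2$ (pairs of edge-disjoint spanning trees, with a Henneberg-type construction) such a construction does exist, which is exactly why the conjecture is a theorem there. Absent the construction, the realistic programme is to verify the conjecture on structured subclasses admitting an ad hoc construction or a direct determinant argument: bounded-degree graphs (Theorem~\ref{thm:main}), surface triangulations (Theorem~\ref{thm:projplane}), and graphs assembled by iterated bracing from lower-dimensional independent graphs (Theorem~\ref{t:bracing}). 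A secondary, genuinely geometric obstacle is that proving a move preserves independence in $\ell_q^d$ is more delicate than in the Euclidean setting: there is no isometry group to average over, so one must control the support functionals $\varphi_{p_v-p_w}$ directly, and consequently even innocuous-looking moves such as higher vertex splits may fail, or may require a $q$-dependent choice of generic placement.
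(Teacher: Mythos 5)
You have not proved this statement, but that is unavoidable: in the paper it is precisely Conjecture~\ref{con:d-dim}, and the paper offers no proof of it. What the paper does establish is the ``only if'' direction (independence in $\ell_q^d$ forces $(d,d)$-sparsity, attributed to \cite{kit-pow}) and the ``if'' direction in three special cases: $|V|\leq 2d$ via the bracing operation (Theorem~\ref{t:bracing}, Corollary~\ref{cor:K2drigid}), graphs with $\delta(G)\leq d+1$ and $\Delta(G)\leq d+2$ (Theorem~\ref{thm:main}), and triangulations of the projective plane for $d=3$ (Theorem~\ref{thm:projplane}). Measured against that, the parts of your proposal that are actually carried out are correct and track the paper closely: your proof of the easy direction (the $d$-dimensional translation space lies in the kernel of $R(H,p|_{V(H)})$ since $q\neq 2$, so $|E(H)|\leq d|V(H)|-d$) is the standard argument behind the cited necessity result; the reduction to $(d,d)$-tight graphs via the count matroid is valid (though not in the paper, and not where the difficulty lies); the base case $K_{2d}$ by iterating the bracing move from $K_2$ is exactly Corollary~\ref{cor:K2drigid}; and your sketches for $0$- and $1$-extensions match Propositions~\ref{p:0ext} and \ref{p:1ext} --- in particular your trick of placing the new vertex so that the support functionals along the deleted edge become parallel is essentially the paper's proof, which after a translation takes $p_{v_{d+1}}=-p_{v_d}$ and puts the new vertex at the origin.

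The genuine gap is the one you yourself name: for $d\geq 3$ there is no known recursive construction of all $(d,d)$-tight graphs from a bounded family of base graphs using only moves known to preserve independence in $\ell_q^d$, and without it the induction never gets off the ground beyond the special classes above. So the proposal is an accurate map of the problem rather than a proof, and it should be presented as such. Two smaller corrections: $K_{2d}$ is the \emph{fully braced} $d$-dimensional cross-polytope, not its $1$-skeleton (the skeleton is $K_{2d}$ minus a perfect matching), although the substance of your base-case claim is unaffected; and in the $0$-extension step the invertibility of the new $d\times d$ block is better justified as in Proposition~\ref{p:0ext}, via the duality map $\Gamma$ being a homeomorphism (Lemmas~\ref{l:smu} and \ref{l:qhom2}) together with openness of the regular set, rather than by real-analyticity of $y\mapsto y^{(q-1)}$, which fails on the coordinate hyperplanes and would need a separate argument that the determinant is not identically zero.
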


The conjecture above is a reformulation of a conjecture from \cite[Remark 3.16]{kit-pow}.
When $d=1$ the conjecture is true and the result is well-known. The case $d=2$ is proved in \cite{kit-pow} and is analogous to a landmark theorem proved independently by Pollaczek-Geiringer \cite{pol} and Laman \cite{laman} for graphs in the Euclidean plane. 
For $d\geq 3$, it is known that graphs which are independent in $\ell_q^d$
are necessarily $(d,d)$-sparse (see \cite{kit-pow}). Thus, it remains to prove the converse statement: every $(d,d)$-sparse graph is independent in $\ell_q^d$ for all $q\in (1,\infty)$, $q\not=2$, and for all $d\geq 3$. 

In this article, we prove this converse statement holds in three special cases: 1) when $|V|\leq 2d$, 2) when $G$ has minimum degree at most $d+1$ and maximum degree at most $d+2$, and 3) when $d=3$ and $G$ is a triangulation of the projective plane. We also provide a catalogue of independence preserving graph operations, including the well-known Henneberg moves, vertex splitting and rigid subgraph substitution.

\section{Dimension hopping}
\label{s:DH}
In this section we consider two graph operations called {\em coning} and {\em bracing}.
It is well-known that the coning operation preserves both independence and minimal rigidity when passing from $\ell_2^d$ to $\ell_2^{d+1}$ (see \cite{whi83}).
We will show that for $q\in(1,\infty)$, $q\not=2$, both the coning operation and the bracing operation preserve independence (but not minimal rigidity) when passing from $\ell_q^d$ to $\ell_q^{d+1}$. A simple application of the coning operation is that the complete graph $K_{d+1}$ is minimally rigid in $\ell_2^d$ for all $d\geq 2$. Indeed, $K_2$ is minimally rigid in 1-space, and for every $d\geq 2$, $K_{d+1}$ is obtained from $K_d$ by a coning operation. We will apply the bracing operation to prove the analogous result that $K_{2d}$ is minimally rigid in $\ell_q^d$, for all $d\geq 2$ and all $q\in(1,\infty)$, $q\not=2$. In particular, Conjecture \ref{con:d-dim} is true whenever $G$ is a subgraph of $K_{2d}$.

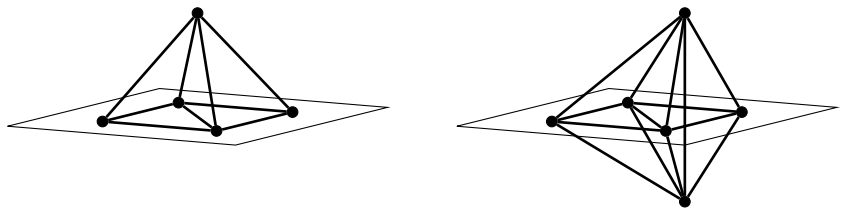
\begin{figure}[ht]
	\begin{tikzpicture}
		\node[vertex] (1) at (1.25,0.0625) {};
		\node[vertex] (2) at (2.25,0.3125) {};
		\node[vertex] (3) at (2.75,-0.0625) {};
		\node[vertex] (4) at (3.75,0.1875) {};
		\node[vertex] (0) at (2.5,1.5) {};
		\node[blankvertex] (0') at (2.5,-1) {};
		
		\draw (0,0) -- (2,0.5) -- (5,0.25) -- (3,-0.25) -- (0,0);
				
		\draw[edge] (1)edge(2);
		\draw[edge] (1)edge(3);
		%\draw[edge] (1)edge(4);
		\draw[edge] (2)edge(3);
		\draw[edge] (2)edge(4);
		\draw[edge] (3)edge(4);
		
		\draw[edge] (0)edge(1);
		\draw[edge] (0)edge(2);
		\draw[edge] (0)edge(3);
		\draw[edge] (0)edge(4);		
	\end{tikzpicture}
	\qquad 
	\begin{tikzpicture}
		\node[vertex] (1) at (1.25,0.0625) {};
		\node[vertex] (2) at (2.25,0.3125) {};
		\node[vertex] (3) at (2.75,-0.0625) {};
		\node[vertex] (4) at (3.75,0.1875) {};
		\node[vertex] (0) at (3,1.5) {};
		\node[vertex] (0') at (3,-1) {};
		
		\draw (0,0) -- (2,0.5) -- (5,0.25) -- (3,-0.25) -- (0,0);
				
		\draw[edge] (1)edge(2);
		\draw[edge] (1)edge(3);
		%\draw[edge] (1)edge(4);
		\draw[edge] (2)edge(3);
		\draw[edge] (2)edge(4);
		\draw[edge] (3)edge(4);
		
		\draw[edge] (0)edge(1);
		\draw[edge] (0)edge(2);
		\draw[edge] (0)edge(3);
		\draw[edge] (0)edge(4);		
		
		\draw[edge] (0')edge(1);
		\draw[edge] (0')edge(2);
		\draw[edge] (0')edge(3);
		\draw[edge] (0')edge(4);		
		
		\draw[edge] (0)edge(0');		
	\end{tikzpicture}
	\caption{Left: A coning operation applied to $K_4-e$. 
	Right: A bracing operation applied to $K_4-e$.}
	\label{fig:cone}
\end{figure}

\subsection{The coning operation}
\label{ss:coning}

Let $G=(V,E)$ and define $G'=(V',E')$ to be the graph with vertex set $V' = V \cup \{v_0\}$ and edge set $E'=E \cup \{v_0v : v \in V \}$.
Then $G'$ is said to be obtained from $G$ by a {\em coning operation}.
(See left hand side of Figure \ref{fig:cone} for an illustration).

\begin{theorem}\label{t:cone2}
Let $q\in (1,\infty)$ and let $d\geq 1$.
Suppose $G'=(V',E')$ is obtained from a graph $G=(V,E)$ by a coning operation.
If $G$ is independent in $\ell_q^d$ then $G'$ is independent in $\ell_q^{d+1}$.
\end{theorem}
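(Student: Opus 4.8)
The plan is to carry over the classical Euclidean coning construction, using that the $\ell_q$ support functional map acts coordinatewise: writing $x^{(q-1)}:=(\sgn(x_1)|x_1|^{q-1},\dots,\sgn(x_{d+1})|x_{d+1}|^{q-1})$, the operation $x\mapsto x^{(q-1)}$ respects the splitting $\bR^{d+1}=\bR^d\times\bR$, equivalently the isometric decomposition $\ell_q^{d+1}=\ell_q^d\oplus_q\ell_q^1$. Since $G$ is independent in $\ell_q^d$, fix a placement $p=(p_v)_{v\in V}$ in $\ell_q^d$ whose altered rigidity matrix $\tilde R(G,p)$ has linearly independent rows (for $q\in(1,\infty)$ the space $\ell_q^d$ is smooth, so $p$ is automatically well-positioned). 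Define a placement $p'$ of $G'$ in $\ell_q^{d+1}$ by $p'_v=(p_v,1)$ for $v\in V$ and $p'_{v_0}=0$. Then $p'_v-p'_w=(p_v-p_w,0)$ is nonzero for every $vw\in E$ and $p'_{v_0}-p'_v=(-p_v,-1)$ is nonzero for every $v\in V$, so $p'$ is a well-positioned placement of $G'$.

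Next I would read off the block structure of $\tilde R(G',p')$ relative to the standard basis of $\bR^{d+1}$. Partition the columns into block $A$ (columns $(v,k)$, $v\in V$, $1\le k\le d$), block $B$ (columns $(v,d+1)$, $v\in V$), and block $C$ (columns $(v_0,k)$, $1\le k\le d+1$), and partition the rows into the $|E|$ rows indexed by $E$ and the $|V|$ cone rows indexed by the edges $v_0v$. Since $(p_v-p_w,0)^{(q-1)}=((p_v-p_w)^{(q-1)},0)$, each $E$-row is supported on block $A$, where it coincides entrywise with the corresponding row of $\tilde R(G,p)$, and vanishes on blocks $B$ and $C$. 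Since $(p_v,1)^{(q-1)}=(p_v^{(q-1)},1)$, the cone row of $v_0v$ has a unit entry in the block-$B$ column $(v,d+1)$ and $0$ in every other block-$B$ column; hence the block-$B$ part of the cone rows is the $|V|\times|V|$ identity matrix. Schematically,
\[
\tilde R(G',p')=\begin{pmatrix}\tilde R(G,p)&0&0\\ \ast&I_{|V|}&\ast\end{pmatrix}.
\]

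It then remains to conclude. Given a linear relation $\sum_{e\in E}\alpha_e(\text{row}_e)+\sum_{v\in V}\beta_v(\text{row}_{v_0v})=0$, restricting to the block-$B$ columns annihilates the $E$-rows and forces $\beta_v=0$ for all $v\in V$; restricting the remaining relation to block $A$ gives $\sum_{e\in E}\alpha_e(\text{row of }\tilde R(G,p))=0$, whence $\alpha_e=0$ for all $e$ by the choice of $p$. Thus $\tilde R(G',p')$ has independent rows. Finally, $\tilde R(G',p')$ is obtained from $R(G',p')$ by scaling each row by the positive scalar $\|p'_v-p'_w\|_q^{q-2}$, so $R(G',p')$ also has independent rows, and therefore $(G',p')$ — hence $G'$ — is independent in $\ell_q^{d+1}$.

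There is no serious obstacle here; the construction is essentially forced. The only point requiring care is the coordinate bookkeeping: one must verify that appending a constant last coordinate produces difference vectors with zero last coordinate along the edges of $G$ (so that blocks $B$ and $C$ vanish on the $E$-rows) but nonzero constant last coordinate along the cone edges (so that the block-$B$ part of the cone rows is invertible). This is precisely where the product structure of $\ell_q^{d+1}$ and the coordinatewise action of $x\mapsto x^{(q-1)}$ are used, and it is also the feature that makes coning in $\ell_q$-spaces more transparent than in a general strictly convex and smooth norm.
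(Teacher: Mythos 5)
Your proposal is correct and follows essentially the same route as the paper: both place the vertices of $G$ in a hyperplane parallel to $\bR^d\times\{0\}$, put the cone vertex at a different last coordinate, and use the last-coordinate columns of the altered rigidity matrix (which, by the coordinatewise action of $x\mapsto x^{(q-1)}$, are zero on the $E$-rows and nonzero on the cone rows) to force the cone-edge coefficients of any row dependence to vanish, reducing to the independence of $\tilde R(G,p)$. The only difference is cosmetic: the paper keeps $G$ at height $0$ and moves the cone vertex off the hyperplane, while you translate so that $G$ sits at height $1$ and the cone vertex is at the origin.
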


\proof
Choose a placement $p$ such that $(G,p)$ is independent in $\ell_q^d$.
Let $\eta:\ell_q^d\to \ell_q^{d+1}$ be the natural embedding $(x_1,\ldots,x_d)\mapsto (x_1,\ldots,x_d,0)$.
Choose any $x\in \ell_q^{d+1}$ such that $\left[x \right]_{d+1}\not=0$.
Define $p'$ to be the placement of $G'$ in $\ell_q^{d+1}$ with $p'_v = \eta(p_v)$ 
for all $v \in V$ and $p'_{v_0} =x$.
Let $\omega= (\omega_{e})_{e \in E'}$ be a vector in the cokernel of  $\tilde{R}(G',p')$.
Then, for each $v\in V$ we have,
\[\omega_{v v_0} \left[(\eta(p_v)-x)^{(q-1)}\right]_{d+1}
=\sum_{w \in N_{G'}(v)} \omega_{vw} \left[(p'_v-p'_w)^{(q-1)}\right]_{d+1}
= 0.\]
Thus $\omega_{v v_0}=0$ for all $v \in V$ and so it follows that the vector $(\omega_{e})_{e \in E}$ lies in the cokernel of  $\tilde{R}(G,p)$. 
Since $\tilde{R}(G,p)$ is independent, we have $\omega_{e}=0$ for all $e\in E$. 
Hence, $\omega=0$ and so $(G',p')$ is independent in $\ell_q^{d+1}$. 
\endproof

\subsection{The bracing operation}
\label{s:bracing}
Let $d\geq 1$ and let $G=(V,E)$ be a finite simple graph with $|V|\geq 2d$.
Define $\tilde{G}$ to be the graph with vertex set $V(\tilde{G})=V\cup \{v_0,v_1\}$ and edge set,
\[E(\tilde{G})= E\cup\{v_0w:w\in S\}\cup \{v_1w:w\in S\}\cup\{v_0v_1\},\]
where $S\subseteq V$ and $|S|=2d$.
The graph $\tilde{G}$ is said to be obtained from $G$ by a {\em bracing operation} on $S$.
(See right hand side of Figure \ref{fig:cone} for an illustration).

\begin{lemma}
Let $G=(V,E)$ be a graph with $|V|\geq 2d$ and suppose $\tilde{G}$ is obtained from $G$ by a bracing operation on $S\subseteq V$, where $|S|=2d$.
\begin{enumerate}[(i)]
\item If $G$ is $(d,d)$-sparse  then $\tilde{G}$ is $(d+1,d+1)$-sparse.
\item If $G$ is $(d,d)$-tight then $\tilde{G}$ is $(d+1,d+1)$-tight if and only if $G=K_{2d}$.
\end{enumerate}
\end{lemma}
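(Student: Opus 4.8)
The plan is to track how the sparsity counts change under the bracing operation. First I would record the basic arithmetic: the operation adds two new vertices $v_0,v_1$ and adds $4d+1$ new edges (namely $2d$ edges from $v_0$ to $S$, $2d$ edges from $v_1$ to $S$, and the single edge $v_0v_1$). Hence $f_{d+1}(\tilde G) = (d+1)(|V|+2) - (|E|+4d+1) = f_d(\tilde G_{\text{on }V}) $ once we account for the shift in coefficient; more precisely, for any subgraph I want to compare $f_{d+1}$ of a subgraph of $\tilde G$ with $f_d$ of its restriction to $V$. The key observation is that the extra ``$+1$'' per vertex in the $(d+1,d+1)$ count exactly matches the structure of the braced vertices, so that the braced part neither helps nor hurts beyond what $K_{2d}$ contributes.

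For part (i), let $H$ be a subgraph of $\tilde G$; I would set $H_0 = H\cap G$ (the restriction to $V$) and split into cases according to which of $v_0,v_1$ lie in $V(H)$ and, when they do, how many of their neighbours in $S$ are included. If neither $v_0$ nor $v_1$ is in $V(H)$ then $H\subseteq G$ and $f_{d+1}(H) = f_d(H) + |V(H)| \geq d + |V(H)| \geq d+1$, using $(d,d)$-sparsity of $G$ and $|V(H)|\geq 1$ (the empty graph case is handled trivially). If exactly one of them, say $v_0$, is present with $k$ neighbours in $V(H)\cap S$, then $f_{d+1}(H) = f_{d+1}(H_0) + (d+1) - k$; since $k\leq 2d$ I need $f_{d+1}(H_0)\geq k-1 \geq $ something, but actually $f_{d+1}(H_0) = f_d(H_0)+|V(H_0)|\geq d + k$ provided $|V(H_0)|\geq k$, which holds because the $k$ neighbours of $v_0$ all lie in $V(H_0)$. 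This gives $f_{d+1}(H)\geq 2d+1-k+1 \geq d+2 > d+1$ wait — more carefully $f_{d+1}(H) \ge (d+k) + (d+1) - k = 2d+1 \ge d+1$. The case with both $v_0,v_1$ present (with $k_0,k_1$ neighbours in $V(H)\cap S$ respectively, and possibly the edge $v_0v_1$) is the binding one: here $f_{d+1}(H) = f_{d+1}(H_0) + 2(d+1) - k_0 - k_1 - \epsilon$ where $\epsilon\in\{0,1\}$ records the edge $v_0v_1$, and $f_{d+1}(H_0) = f_d(H_0) + |V(H_0)| \geq d + \max(k_0,k_1)$. Plugging in and using $k_0,k_1\leq 2d$ yields $f_{d+1}(H)\geq d+1$, with equality analysis feeding into part (ii).

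For part (ii), taking $H = \tilde G$ itself with $H_0 = G$, $k_0 = k_1 = 2d$, $\epsilon = 1$ gives $f_{d+1}(\tilde G) = f_d(G) + |V| + 2(d+1) - 4d - 1 = f_d(G) + |V| - 2d + 1$. If $G$ is $(d,d)$-tight then $f_d(G) = d$, so $f_{d+1}(\tilde G) = d + |V| - 2d + 1 = |V| - d + 1$. For $\tilde G$ to be $(d+1,d+1)$-tight we need this to equal $d+1$, i.e. $|V| = 2d$, and then $G$ is a $(d,d)$-tight graph on $2d$ vertices, which forces $|E| = d\cdot 2d - d = 2d^2 - d = \binom{2d}{2}$, so $G = K_{2d}$; conversely if $G = K_{2d}$ then $|V| = 2d$, $f_{d+1}(\tilde G) = d+1$, and sparsity from part (i) gives tightness. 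I expect the main obstacle to be the bookkeeping in the ``both $v_0,v_1$ present'' case of part (i): one must be careful that the bound $f_{d+1}(H_0)\geq d + \max(k_0,k_1)$ (rather than something weaker) is actually available, which relies on the fact that both $v_0$'s and $v_1$'s selected neighbours sit inside $V(H_0)$ so that $|V(H_0)|\geq \max(k_0,k_1)$, and on checking the degenerate subcases where $H_0$ has very few vertices or edges. Everything else is linear arithmetic in the counts.
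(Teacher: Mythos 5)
Your proof is correct and follows essentially the same route as the paper: a case analysis on which of $v_0,v_1$ lie in the subgraph combined with the $(d,d)$-sparsity count for the part inside $G$, and for (ii) the direct computation $f_{d+1}(\tilde{G})=f_d(G)+|V|-2d+1$ forcing $|V|=2d$ and hence $G=K_{2d}$. The only difference is bookkeeping: your use of $|V(H_0)|\ge\max(k_0,k_1)$ handles all subgraphs uniformly, whereas the paper first disposes of subgraphs on at most $2d+2$ vertices via the $(d+1,d+1)$-sparsity of $K_{2(d+1)}$ and then uses the cruder bounds $|S|$ and $2|S|+1$ on the number of new edges.
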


\proof
$(i)$ Let $\tilde{H}$ be a subgraph of $\tilde{G}$ and let $H=\tilde{H} \cap G$.  
Recall that $K_{2(d+1)}$ is $(d+1,d+1)$-sparse and so we may assume that $|V(\tilde{H})|>2d+2$.
If $\tilde{H}=H$ then,
\[|E(\tilde{H})| \leq d|V(\tilde{H})|-d
= (d+1)|V(\tilde{H})|-(d+1) - |V(\tilde{H})|+1.\]
If $|V(\tilde{H})| = |V(H)|+1$ then,
\begin{eqnarray*}
|E(\tilde{H})| &\leq& (d|V(H)|-d)+|S|
= (d+1)|V(\tilde{H})|-(d+1) - |V(\tilde{H})|+d+1. 
\end{eqnarray*}
Similarly, if $|V(\tilde{H})| = |V(H)|+2$ then,
\begin{eqnarray*}
|E(\tilde{H})| &\leq&  d|V(H)|-d+2|S|+1 
= (d+1)|V(\tilde{H})|-(d+1) - |V(\tilde{H})|+2d+2. 
\end{eqnarray*}
Thus $\tilde{G}$ is $(d+1,d+1)$-sparse.

$(ii)$ By a counting argument similar to $(i)$, $\tilde{G}$ is $(d+1,d+1)$-tight if and only if 
$|V(\tilde{G})|=2d+2$. In the latter case, $G$ is a $(d,d)$-tight graph with $|V|=2d$ and so $G=K_{2d}$. 

\endproof

\begin{theorem}
\label{t:bracing}
Let $G=(V,E)$ be a graph with $|V|\geq 2d$ and suppose $\tilde{G}$ is obtained from $G$ by a bracing operation on $S\subseteq V$, where $|S|=2d$.
Let $q\in (1,\infty)$, $q\not=2$, and let $d\geq 1$. 
If $G$ is independent in $\ell^d_q$ then $\tilde{G}$ is independent in $\ell^{d+1}_q$.
\end{theorem}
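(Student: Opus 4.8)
The plan is to mimic the structure of the coning proof (Theorem \ref{t:cone2}), but now exploiting the two apex vertices $v_0,v_1$ together with the edge $v_0v_1$ and the large common neighbourhood $S$ with $|S|=2d$. First I would choose a placement $p$ of $G$ in $\ell_q^d$ with $(G,p)$ independent, let $\eta:\ell_q^d\to\ell_q^{d+1}$ be the embedding $(x_1,\dots,x_d)\mapsto(x_1,\dots,x_d,0)$, and set $p'_v=\eta(p_v)$ for $v\in V$. I then need to place $v_0$ and $v_1$ in $\ell_q^{d+1}$ so that the resulting framework is well-positioned and so that a cokernel vector $\omega=(\omega_e)_{e\in E(\tilde G)}$ of $\tilde R(\tilde G,p')$ is forced to vanish. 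The natural choice is to place $v_0$ and $v_1$ symmetrically off the hyperplane $\{x_{d+1}=0\}$, say $p'_{v_0}=z+te_{d+1}$ and $p'_{v_1}=z-te_{d+1}$ for suitable $z\in\ell_q^d$ (identified with its image) and $t\neq 0$; genericity of $z$ within $\ell_q^d$ can be used to invoke independence of $(G,p)$ in the final step, and one checks that for generic $z,t$ all the new edge directions are smooth.

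Next I would read off the cokernel equations. For each $v\in V$, the $(d+1)$-st coordinate of the corresponding column only receives contributions from the edges $vv_0$ and $vv_1$ (the edges within $G$ contribute $0$ there since $p'_v-p'_w$ has vanishing last coordinate), giving
\[
\omega_{vv_0}\big[(p'_v-p'_{v_0})^{(q-1)}\big]_{d+1}+\omega_{vv_1}\big[(p'_v-p'_{v_1})^{(q-1)}\big]_{d+1}=0 .
\]
Because $p'_v-p'_{v_0}$ and $p'_v-p'_{v_1}$ differ only in the last coordinate, which is $-t$ and $+t$ respectively, the two bracketed quantities are negatives of each other (the first $d$ coordinates agree and the last coordinates are $\mp t$, so the $(q-1)$-power map sends them to $\pm(\text{same})$ in the last slot), hence $\omega_{vv_0}=\omega_{vv_1}$ for all $v\in S$. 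Then I look at the columns for $v_0$: the equations there read $\omega_{v_0v_1}(p'_{v_0}-p'_{v_1})^{(q-1)}+\sum_{w\in S}\omega_{v_0w}(p'_{v_0}-p'_w)^{(q-1)}=0$ (as a vector in $\bR^{d+1}$). Projecting onto the last coordinate and using $\omega_{v_0w}=\omega_{v_1w}$ together with the analogous $v_1$-equation gives a relation forcing $\omega_{v_0v_1}=0$ and then, after subtracting the $v_0$- and $v_1$-equations, a system whose only solution (for generic $z$, using that the $2d$ vectors $(p'_{v_0}-p'_w)$ for $w\in S$ together with the vertical direction span $\bR^{d+1}$ — this is where $|S|=2d$ and genericity enter) is $\omega_{v_0w}=\omega_{v_1w}=0$ for all $w\in S$. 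With all the new edge multipliers killed, the vector $(\omega_e)_{e\in E}$ lies in the cokernel of $\tilde R(G,p)$, which is trivial by independence of $(G,p)$, so $\omega=0$.

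The main obstacle I anticipate is the middle step: showing that the $v_0$- and $v_1$-column equations, once we know $\omega_{v_0w}=\omega_{v_1w}$, actually force these common values and $\omega_{v_0v_1}$ to be zero. This is genuinely a rank/genericity statement about the $2d$ direction vectors from the apex to $S$ inside $\ell_q^{d+1}$, and it needs care because the map $x\mapsto x^{(q-1)}$ is nonlinear, so "generic position" has to be phrased correctly (e.g. as the non-vanishing of a suitable determinant that is a nonzero real-analytic function of $z$ and $t$, hence nonzero for a dense set of parameters, reducing to a single explicit witnessing configuration such as the one used to show $K_{2d}$ is minimally rigid). Choosing the apex placement symmetric about the hyperplane is exactly what makes the first reduction ($\omega_{v_0w}=\omega_{v_1w}$) clean, and it is worth isolating the claim that for such symmetric placements the combined $(2d+1)\times(d+1)$ coefficient matrix built from the vectors $(p'_{v_0}-p'_w)^{(q-1)}$, $w\in S$, and $(p'_{v_0}-p'_{v_1})^{(q-1)}$ has a left kernel consistent only with $\omega_{v_0v_1}=0$ and equal-and-opposite entries that must then vanish; once that linear-algebra lemma is in place, the rest of the argument is bookkeeping identical in spirit to the coning proof.
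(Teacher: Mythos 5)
Your symmetric choice of apex placements, $p'_{v_0}=(z,t)$ and $p'_{v_1}=(z,-t)$ with a common projection $z$, is exactly what breaks the argument. Writing $\mu_w:=\omega_{v_0w}=\omega_{v_1w}$ (your first reduction, which is fine), the $v_0$-column and $v_1$-column equations become identical up to an overall sign: in the first $d$ coordinates both read $\sum_{w\in S}\mu_w(z-p_w)^{(q-1)}=0$, and in the last coordinate both read $2^{q-1}\omega_{v_0v_1}+\sum_{w\in S}\mu_w=0$. So they do \emph{not} force $\omega_{v_0v_1}=0$ (they only express it in terms of $\sum_w\mu_w$), subtracting them yields $0=0$, and you are left with $d+1$ conditions on the $2d+1$ unknowns $\{\mu_w\}_{w\in S}\cup\{\omega_{v_0v_1}\}$. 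Spanning of $\bR^{d+1}$ by the apex edge directions cannot close this gap: you would need the $2d+1$ vectors $(p'_{v_0}-p'_w)^{(q-1)}$, $(p'_{v_0}-p'_{v_1})^{(q-1)}$ to be linearly \emph{independent}, which is impossible since $2d+1>d+1$. Worse, the surviving freedom cannot be removed by genericity, because the unkilled $\mu_w$ feed back into the $w$-columns as $\sum_{u\in N_G(w)}\omega_{wu}(p_w-p_u)^{(q-1)}+2\mu_w(p_w-z)^{(q-1)}=0$: a nonzero cokernel vector of $\tilde R(\tilde G,p')$ exists precisely when the $d$-dimensional framework obtained from $(G,p)$ by coning a new vertex placed at $z$ onto all of $S$ has a row dependence. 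That coned graph has $|E|+2d$ edges on $|V|+1$ vertices, and since the rank of any well-positioned rigidity matrix in $\ell_q^d$ is at most $d(|V|+1)-d=d|V|$, a nonzero dependence exists whenever $f_d(G)<2d$ --- in particular for every $(d,d)$-tight $G$, e.g.\ $G=K_{2d}$, which is exactly the case the theorem is needed for (Corollary \ref{c:K2d}). So with mirror-symmetric apexes the braced framework is \emph{never} independent in the tight case, for any choice of $z$, $t$ and any independent placement $p$; this is not a matter of phrasing genericity correctly, the construction itself fails.

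The paper's proof avoids this in two ways. First, the apexes are given \emph{different} projections, $\tilde p_{v_0}=(0,\ldots,0,-\lambda)$ and $\tilde p_{v_1}=(1,\ldots,1,\lambda)$, so the $v_0$- and $v_1$-column conditions are genuinely distinct. Second, instead of a coning-style cokernel chase it uses a block-triangular rank argument: after permuting rows and columns, $\tilde R(\tilde G,\tilde p)$ has diagonal blocks $\tilde R(G,p)$ and a $(4d+1)$-row matrix $D(p)$ formed by the new edges against the new $(\,\cdot\,,d+1)$ and apex columns; $D(p)$ is shown to be row-independent for one explicit witness placement of $S$ (reducing to the invertibility of a circulant matrix, using $q\neq2$), and then openness and density let one choose a single $p$ making both blocks independent. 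If you want to salvage your outline you must de-symmetrise the apexes and prove a nondegeneracy statement of this kind for the new-edge block; as written, the middle step you flagged is not merely delicate but false.
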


\proof
Let $p:V\to\bR^d$ be a  placement of $G$ in $\bR^d$ and write $p_w=(p_w^1,\ldots,p_w^d)$ for each $w\in V$.
Define $\tilde{p}:V(\tilde{G}) \to \bR^{d+1}$ by setting $\tilde{p}_w=(p_w^1,\ldots,p_w^d,0)$ for all $w\in V$, $\tilde{p}_{v_0}=(0,\ldots,0,-\lambda)$ and $\tilde{p}_{v_1}=(1,\ldots,1,\lambda)$ for some positive scalar $\lambda>0$.
Thus the vertices of $G$ are embedded in $\bR^d\times\{0\}$ and the two new vertices $v_0$ and $v_1$ are placed on the hyperplanes $x_{d+1}=-\lambda$ and $x_{d+1}=\lambda$ respectively. After a suitable permutation of rows and columns, the (altered) rigidity matrix for  $(\tilde{G},\tilde{p})$ takes the form,
\[\tilde{R}(\tilde{G},\tilde{p}) = \left[\begin{array}{cc}
\tilde{R}(G,p)&0 \\
* & D(p) 
\end{array}\right]\]
where $D(p)$ is a $(2|S|+1)\times (|V|+2(d+1))$-matrix. 
We will show that $D(p)$ is independent for some (and hence almost every) choice of $p$.

Suppose $|V|=2d$. Then $S = V$ and the rows of $D(p)$ are those indexed by the sets $E_0=\{v_0w:w\in V\}$ and $E_1=\{v_1w:w\in V\}$ together with the edge $v_0v_1$.
The columns of $D(p)$ are those indexed by $\{(w,d+1):w\in V\}$ together with the pairs $(v_0,1),\ldots,(v_0,d+1)$ and $(v_1,1),\ldots,(v_1,d+1)$. 
Thus, after a suitable permutation of rows and columns, $D(p)$ takes the form,
\[\kbordermatrix{
& (V;d+1) & & (v_0;1,\ldots,d) & & (v_1;1,\ldots,d) & & (v_0,d+1) & & (v_1,d+1)\\ 
E_0 & \begin{matrix}\lambda^{q-1}&&\\&\ddots&\\&&\lambda^{q-1}\end{matrix} &\vrule& D_{0}(p) &\vrule& 0 &\vrule& \begin{array}{c}-\lambda^{q-1}\\ \vdots \\-\lambda^{q-1}\end{array} &\vrule&\begin{array}{c}0\\ \vdots \\0\end{array}\\ \cline{2-10}
E_1 & \begin{matrix}-\lambda^{q-1}&&\\&\ddots&\\&&-\lambda^{q-1}\end{matrix} &\vrule& 0 &\vrule& D_{1}(p) &\vrule& \begin{array}{c}0\\ \vdots \\0\end{array} &\vrule& \begin{array}{c}\lambda^{q-1}\\ \vdots \\\lambda^{q-1}\end{array} \\ \cline{2-10}
v_0v_1 & 0 &\vrule& \begin{array}{ccc}-1 &\cdots&-1\end{array} &\vrule&\begin{array}{ccc}1 &\cdots&1\end{array} &\vrule& -(2\lambda)^{q-1} &\vrule& (2\lambda)^{q-1}}.\]
Note that  to show $D(p)$ is independent for some $p$, it is sufficient to show that the square submatrix of $D(p)$ formed by deleting the  $(v_1,d+1)$-column is independent. 
Adding each $E_0$ row, indexed by $v_0w$, of this square submatrix to the corresponding $E_1$ row, indexed by $v_1w$, we obtain, 
\[ \kbordermatrix{
& (V(G);d+1) & & (v_0;1,\ldots,d) & & (v_1;1,\ldots,d) & & (v_0,d+1) \\ 
E_0 & \begin{matrix}\lambda^{q-1}&&\\&\ddots&\\&&\lambda^{q-1}\end{matrix} &\vrule& D_{0}(p) &\vrule& 0 &\vrule& \begin{array}{c}-\lambda^{q-1}\\ \vdots \\-\lambda^{q-1}\end{array} 
\\ \cline{2-8}
E_1 & 0 &\vrule& D_0(p) &\vrule& D_{1}(p) &\vrule& \begin{array}{c}-\lambda^{q-1}\\ \vdots \\-\lambda^{q-1}\end{array}  \\ \cline{2-8}
v_0v_1 & 0 &\vrule& \begin{array}{ccc}-1 &\cdots&-1\end{array} &\vrule&\begin{array}{ccc}1 &\cdots&1\end{array} &\vrule& -(2\lambda)^{q-1}}.\]
It is clear that the first $|V|$ rows, indexed by $E_0$, are independent and that it is now sufficient to show there exists $p$ such that the $(2d+1)\times(2d+1)$-matrix, 
\[ A:=\kbordermatrix{
&  (v_0;1,\ldots,d) & & (v_1;1,\ldots,d) & & (v_0,d+1) \\ 
E_1 & D_0(p) &\vrule& D_{1}(p) &\vrule& \begin{array}{c}-\lambda^{q-1}\\ \vdots \\-\lambda^{q-1}\end{array} \\ \cline{2-6}
v_0v_1 & \begin{array}{ccc}-1 &\cdots&-1\end{array} &\vrule&\begin{array}{ccc}1 &\cdots&1\end{array} &\vrule& -(2\lambda)^{q-1}}\]
is independent. To this end, let $V=\{w_1,\ldots,w_{d},\tilde{w}_1,\ldots,\tilde{w}_d\}$. For each $i=1,\ldots,d$, choose $p_{w_i}\in \bR^d$ and   $p_{\tilde{w}_i}\in \bR^d$ such that, 
\[p_{w_i}^j 
= \left\{\begin{array}{cl} 0 & \mbox{ if }i=j,\\
\frac{1}{2} & \mbox{ otherwise},
\end{array}\right. \quad\quad \mbox{and } \quad\quad
p_{\tilde{w}_i}^j 
= \left\{\begin{array}{cl} 
1 & \mbox{ if }i=j,\\
\frac{1}{2} & \mbox{ otherwise}.
\end{array}\right.\]
Then, after a suitable permutation of rows and columns, 
the square submatrix $A$ takes the form,
\[ \kbordermatrix{
&  (v_0;1,\ldots,d) & & (v_1;1,\ldots,d) & & (v_0,d+1) \\ 
{\tiny \begin{array}{c}v_1w_1\\ \vdots \\ v_1w_d\end{array}} & 2I-2C &\vrule& 2C-I &\vrule& 
\begin{array}{c}-\lambda^{q-1}\\ \vdots \\-\lambda^{q-1}\end{array} \\ \cline{2-6}
{\tiny \begin{array}{c}v_1\tilde{w}_1\\ \vdots \\ v_1\tilde{w}_d\end{array}} & I-2C &\vrule& 2C-2I &\vrule& \begin{array}{c}-\lambda^{q-1}\\ \vdots \\-\lambda^{q-1}\end{array} \\ \cline{2-6}
v_0v_1 & \begin{array}{ccc}-1 &\cdots&-1\end{array} &\vrule&\begin{array}{ccc}1 &\cdots&1\end{array} &\vrule& -(2\lambda)^{q-1}}\]
where $C$ is the $d\times d$-matrix,
\[C=\left[\begin{array}{cccc}
 1 &\frac{1}{2^{q-2}} & \cdots &\frac{1}{2^{q-2}}  \\  
 \frac{1}{2^{q-2}}& 1 & \cdots &\frac{1}{2^{q-2}} \\
 \vdots & \vdots&\ddots & \vdots  \\
 \frac{1}{2^{q-2}}&\frac{1}{2^{q-2}} & \cdots &1
\end{array}\right].\]
Subtracting each $v_1\tilde{w}_i$ row from the corresponding $v_1w_i$ row and applying further row reductions, 
this matrix reduces to,
\[ B:=\kbordermatrix{
&  (v_0;1,\ldots,d) & & (v_1;1,\ldots,d) & & (v_0,d+1) \\ 
{\tiny \begin{array}{c}v_1w_1\\ \vdots \\ v_1w_d\end{array}} & I &\vrule& I &\vrule& 
\begin{array}{c}0\\ \vdots \\0\end{array} \\ \cline{2-6}
{\tiny \begin{array}{c}v_1\tilde{w}_1\\ \vdots \\ v_1\tilde{w}_d\end{array}} & 0 &\vrule& C &\vrule& \begin{array}{c}-\lambda^{q-1}\\ \vdots \\-\lambda^{q-1}\end{array} \\ \cline{2-6}
v_0v_1 & \begin{array}{ccc}0 &\cdots&0\end{array} &\vrule&\begin{array}{ccc}2 &\cdots&2\end{array} &\vrule& -(2\lambda)^{q-1}}.\]
For $i=1,\ldots,d$, let $r_{i}$ denote the row of $B$ which is indexed by $v_1\tilde{w}_i$ and let $r_e$ denote the row indexed by $v_0v_1$.
Note that $C$ is a circulant matrix with determinant,
\[\det(C) =\left(1+\frac{d-1}{2^{q-2}}\right)\left(1-\frac{1}{2^{q-2}}\right)^{d-1}.\]
Thus, since $q\not=2$, $C$ is invertible and so the rows $r_1,\ldots,r_d$ are independent.

Suppose $r_{e} = \sum_{i=1}^d \mu_{i} r_{i}$ for some scalars $\mu_{1},\ldots, \mu_d\in \bR$. 
On considering the $(v_0,d+1)$ column it is clear that $\sum_{i=1}^d\mu_i =2^{q-1}$. 
Moreover, considering the $(v_1,1)$ column,
\[
\left(1-\frac{1}{2^{q-2}}\right)\mu_{1}
= \mu_1+\frac{1}{2^{q-2}}(\mu_2+\cdots+\mu_d)-2\\
=0.\]
Thus, since $q\not=2$, we have $\mu_1=0$. By similar arguments, $\mu_2=\cdots=\mu_d=0$.
Thus the matrix $B$, and hence also the matrices $A$ and $D(p)$, are independent.

Note that the set of points $p$ for which $D(p)$ is independent is open and dense in $\bR^{d|V(G)|}$. Thus we may choose $p\in \bR^{d|V(G)|}$ such that both $\tilde{R}(G,p)$ and $D(p)$ are independent. In particular, $\tilde{R}(\tilde{G},\tilde{p})$ is independent, as required. 

Finally, if $|V|>2d$ then note that $\tilde{R}(\tilde{G},\tilde{p})$ will have additional columns, indexed by $\{(w,d+1):w\in V\backslash S\}$, with zero entries. These columns do not alter the dependencies between the rows and so the result follows as above. 
\endproof

As a corollary we show that $K_{2d}$ is minimally rigid in $\ell_q^d$ for $q\in (1,\infty)$, $q\not=2$.

\begin{corollary}\label{cor:K2drigid}
\label{c:K2d}
Let $q\in (1,\infty)$, $q\not=2$. 
\begin{enumerate}[(i)]
\item If $|V|\leq 2d$ then $G=(V,E)$ is independent in $\ell^d_q$.
\item $K_{2d}$ is minimally rigid in $\ell^d_q$ for all $d\geq 1$.
\end{enumerate}
\end{corollary}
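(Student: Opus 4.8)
The plan is to reduce both parts to Theorem~\ref{t:bracing}, the key observation being that $K_{2(d+1)}$ is \emph{precisely} the graph obtained from $K_{2d}$ by a bracing operation on its entire vertex set. Indeed, taking $G=K_{2d}$ and $S=V(G)$ in the definition of the bracing operation, the braced graph $\tilde G$ has $2d+2=2(d+1)$ vertices, and every pair of them is an edge: pairs inside $S$ because $G=K_{2d}$; pairs $\{v_0,w\}$ and $\{v_1,w\}$ with $w\in S$ because of the two new stars; and $\{v_0,v_1\}$ because of the bridge edge $v_0v_1$. Hence $\tilde G=K_{2(d+1)}$.

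First I would prove, by induction on $d$, that $K_{2d}$ is independent in $\ell_q^d$. The base case $d=1$ is immediate, since the (altered) rigidity matrix of $K_2$ in $\ell_q^1=\bR$ is a single non-zero row. For the inductive step, assume $K_{2d}$ is independent in $\ell_q^d$; since $|V(K_{2d})|=2d$, the hypothesis $|V|\geq 2d$ of Theorem~\ref{t:bracing} holds, and $K_{2(d+1)}$ is obtained from $K_{2d}$ by a bracing operation on $S=V(K_{2d})$ with $|S|=2d$, so Theorem~\ref{t:bracing} yields that $K_{2(d+1)}$ is independent in $\ell_q^{d+1}$.

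For part~(i) I would use a subgraph argument. Any graph $G=(V,E)$ with $|V|\leq 2d$ is isomorphic to a subgraph of $K_{2d}$, obtained by identifying $V$ with a subset of $V(K_{2d})$ and deleting the edges not in $E$. Fixing a placement $p$ of $K_{2d}$ for which $\tilde R(K_{2d},p)$ has full row rank, the matrix $\tilde R(G,p|_V)$ is obtained from $\tilde R(K_{2d},p)$ by deleting the rows indexed by $E(K_{2d})\setminus E$ together with the columns indexed by $(w,k)$ for $w\notin V$; each surviving row is indexed by an edge with both ends in $V$, hence has zero entries in every deleted column, so the surviving rows remain independent. Thus $G$ is independent in $\ell_q^d$.

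For part~(ii) I would combine the independence of $K_{2d}$ with the rank criterion of Proposition~\ref{p:rigid}(iii). Choosing a regular placement $p$ of $K_{2d}$, independence forces $\rank\tilde R(K_{2d},p)$ to equal its number of rows, namely $|E(K_{2d})|=\binom{2d}{2}=2d^2-d=d\cdot 2d-d=d\,|V(K_{2d})|-d$. As $q\neq 2$ we have $\dim\T(p)=d$ by \cite[Lemma~2.3]{kit-pow}, so $\rank\tilde R(K_{2d},p)=d\,|V(K_{2d})|-\dim\T(p)$ and $(K_{2d},p)$ is rigid; being also independent, it is minimally rigid in $\ell_q^d$. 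I do not expect a genuine obstacle here — the substance lies entirely in Theorem~\ref{t:bracing} — and the only points demanding care are the identification of $K_{2(d+1)}$ as a braced $K_{2d}$, the harmless zero columns in the subgraph step of~(i), and the elementary identity $\binom{2d}{2}=d\,|V|-d$ that makes $K_{2d}$ exactly $(d,d)$-tight.
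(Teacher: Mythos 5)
Your proposal is correct and follows essentially the same route as the paper: an induction using the bracing operation (Theorem \ref{t:bracing}) starting from $K_2$, a subgraph/row-deletion argument for part (i), and the count $|E(K_{2d})|=d|V|-d$ combined with the rank criterion for part (ii). You merely spell out details the paper leaves implicit, such as verifying that bracing $K_{2d}$ on its full vertex set yields exactly $K_{2(d+1)}$.
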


\proof
It is clear that $K_2$ is independent in $\bR$.
Note that, for all $d\geq 2$, $K_{2d}$ is obtained from $K_{2(d-1)}$ by a bracing operation on the vertex set of $K_{2(d-1)}$.
Thus, by Theorem \ref{t:bracing}, $K_{2d}$ is independent in $\ell^d_q$ for all $d\geq2$.
If $|V|\leq 2d$ then $G$ is a subgraph of $K_{2d}$ and hence is independent in $\ell^d_q$. 
Finally, since $K_{2d}$ is independent and $|E|=d|V|-d$, it is minimally rigid in $\ell^d_q$.
\endproof

\begin{remark}
We conjecture that $K_{2d}$ admits a rigid  (but not necessarily minimally rigid) placement in every $d$-dimensional normed space. This conjecture clearly holds for the Euclidean norm and the above corollary confirms the conjecture for all non-Euclidean smooth $\ell_q$ norms. The conjecture is also known to hold for all non-Euclidean normed planes (see \cite{dew2}) and for the cylinder and hypercylinder norms on $\bR^3$ and $\bR^4$ respectively (see \cite{kitlev17}).
\end{remark}

\section{Graph operations}
\label{sec:ops}

In this section we provide a catalogue of graph operations which preserve independence in smooth and strictly convex normed spaces. These include the well known Henneberg moves (0 and 1-extensions), vertex splitting moves and rigid subgraph substitutions. By applying any sequence of these graph operations to $K_{2d}$ we may obtain a large class of minimally rigid graphs  for $\ell_q^d$ when $q\in(1,\infty)$ and $q\not=2$.

\subsection{0-extensions}

\begin{definition}
Let $G=(V,E)$ be a graph and define $G'$ by setting $V(G')=V\cup \{v\}$ and $E(G')= E\cup\{vw:w\in S\}$,
where $S\subseteq V$ and $|S|=d$.
The graph $G'$ is said to be obtained from $G$ by a {\em $d$-dimensional 0-extension} on $S$; see Figure \ref{fig:0}.
\end{definition}

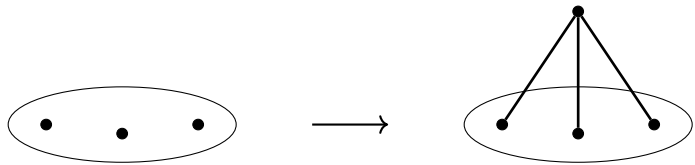
\begin{figure}[ht]
	\begin{tikzpicture}
		\node[vertex] (1) at (-5,0) {};
		\node[vertex] (2) at (-6,-0.12) {};
		\node[vertex] (3) at (-7,0) {};
		
		\draw (-6,0) ellipse (1.5cm and 0.5cm);
		
		\draw[thick,->] (-3.5,0) -- (-2.5,0);
		
		\node[vertex] (1') at (1,0) {};
		\node[vertex] (2') at (0,-0.12) {};
		\node[vertex] (3') at (-1,0) {};
		\node[vertex] (0') at (0,1.5) {};
		
		\draw (0,0) ellipse (1.5cm and 0.5cm);
				
		\draw[edge] (0')edge(1');
		\draw[edge] (0')edge(2');
		\draw[edge] (0')edge(3');
	\end{tikzpicture}
	\caption{A $3$-dimensional $0$-extension.}
	\label{fig:0}
\end{figure}

To prove that 0-extensions preserve rigidity in the generality of strictly convex and smooth normed spaces we will need the following lemma.

\begin{lemma}
\label{l:qhom2}
Let $X$ be a finite dimensional real normed linear space which is smooth and strictly convex  and let $d=\dim X$. Let $y_1, \ldots, y_n \in X$ where $n \leq d$.
Then, for all $\epsilon > 0$, there exists $y'_1, \ldots, y'_n \in X$ such that $\|y_i - y'_i \| < \epsilon$ for each $1 \leq i \leq n$ and $\varphi_{y'_1}, \ldots, \varphi_{y'_n}$ are linearly independent in $X^*$.
\end{lemma}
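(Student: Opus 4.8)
The plan is to induct on $n$, using the homeomorphism $\Gamma : X \to X^*$ from Lemma \ref{l:smu}(iv) together with a dimension-counting argument. For $n=1$ the statement is essentially vacuous: if $y_1 \neq 0$ then $\varphi_{y_1} \neq 0$, and if $y_1 = 0$ we may perturb it slightly to a nonzero point, whose support functional is then nonzero hence linearly independent as a singleton. So suppose $n \geq 2$ and that the claim holds for $n-1$. Given $y_1,\ldots,y_n$ and $\epsilon > 0$, first apply the inductive hypothesis to $y_1,\ldots,y_{n-1}$ to obtain perturbations $y_1',\ldots,y_{n-1}'$ with $\|y_i - y_i'\| < \epsilon$ and $\varphi_{y_1'},\ldots,\varphi_{y_{n-1}'}$ linearly independent in $X^*$. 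It remains only to adjust $y_n$.

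The key point is that the set of ``bad'' choices for the last point is small. Let $W = \vspan\{\varphi_{y_1'},\ldots,\varphi_{y_{n-1}'}\} \subseteq X^*$, a subspace of dimension $n-1 \leq d-1$, hence a proper closed subspace of $X^*$ with empty interior. Since $\Gamma$ is a homeomorphism of $X$ onto $X^*$, the preimage $\Gamma^{-1}(W) = \{ x \in X : \varphi_x \in W \}$ is a closed subset of $X$ with empty interior (its image under the open map $\Gamma$ is the nowhere-dense set $W$). In particular the open ball $B(y_n,\epsilon)$ is not contained in $\Gamma^{-1}(W)$, so there exists $y_n' \in B(y_n,\epsilon)$ with $\varphi_{y_n'} \notin W$. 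For this choice, $\varphi_{y_n'}$ is not in the span of $\varphi_{y_1'},\ldots,\varphi_{y_{n-1}'}$, so the full list $\varphi_{y_1'},\ldots,\varphi_{y_n'}$ is linearly independent, completing the induction.

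The only genuine subtlety is making sure the perturbation of $y_n$ does not destroy the independence already arranged for the first $n-1$ functionals — but this is automatic, since we leave $y_1',\ldots,y_{n-1}'$ untouched when choosing $y_n'$. A secondary point worth stating carefully is why $\Gamma^{-1}(W)$ has empty interior: if it contained a ball, then $\Gamma$ being a continuous surjection (indeed a homeomorphism) would force $W$ to contain the image of that ball, which is open in $X^*$ by invariance of domain (or simply because $\Gamma$ is an open map, being a homeomorphism), contradicting that $W$ is a proper subspace. I expect this topological step to be the main thing to get right; everything else is bookkeeping. Alternatively, one can avoid the induction entirely by working in the product: the map $(x_1,\ldots,x_n) \mapsto (\varphi_{x_1},\ldots,\varphi_{x_n}) \in (X^*)^n$ is continuous, and the set where the $\varphi_{x_i}$ fail to be independent is the preimage, under this map followed by $\Gamma^{-n}$, of the determinantal variety of rank-deficient $n$-tuples, which is nowhere dense in $(X^*)^n$; hence a point of $\prod_i B(y_i,\epsilon)$ avoiding it exists. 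I would present the inductive version as it keeps the topology elementary.
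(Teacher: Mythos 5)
Your proof is correct, but it is organised differently from the paper's. The paper argues in one shot: fixing a basis $b_1,\ldots,b_d$, it packages the $n$ support functionals into a map $\theta\colon X^n\to M_{n\times d}(\mathbb{R})$, $(x_1,\ldots,x_n)\mapsto(\varphi_{x_i}(b_j))_{i,j}$, notes that $\theta$ is a homeomorphism because $\Gamma$ is one (Lemma \ref{l:smu}(iv)), and then pulls back the open dense set of full-row-rank $n\times d$ matrices to get a dense set of good placements in $X^n$ --- this is exactly the ``product'' alternative you sketch in your last few lines. Your presented argument instead inducts on $n$, perturbing only the last point so that $\varphi_{y_n'}$ escapes the span $W$ of the previously arranged functionals, and the topological core is that $\Gamma^{-1}(W)$ is closed with empty interior since $W$ is a proper subspace of $X^*$ and $\Gamma$ is a homeomorphism (in particular open); that step, which you correctly flag as the one needing care, is sound, and the base case and the observation that the chosen $y_n'$ is automatically nonzero are handled. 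Both proofs rest on the same key input, Lemma \ref{l:smu}(iv); what yours buys is a more elementary genericity fact (a proper subspace is nowhere dense) in place of density of the full-rank locus in $M_{n\times d}(\mathbb{R})$, at the cost of an induction. What the paper's formulation buys is the explicit homeomorphism $\theta$ itself, which is reused verbatim in the proof of Proposition \ref{p:0ext} (there one takes $n=d$ and intersects $\operatorname{Reg}(G;X)$ with $X^{|V\setminus S|}\times\theta^{-1}(\GL_d(\mathbb{R}))$), so if you adopt your inductive version you would need to extract that map separately for the $0$-extension argument.
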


\proof
Let $\epsilon>0$ and let $b_1,\ldots,b_d$ be a basis for $X$.
Define,
\[\theta:X^{n}\to M_{n \times d}(\bR), \,\,\,\,
(x_1,\ldots,x_n)\mapsto \left[\begin{array}{ccc}\varphi_{x_1}(b_1) & \cdots & \varphi_{x_1}(b_d) \\
\vdots & & \vdots\\
\varphi_{x_n}(b_1) & \cdots & \varphi_{x_n}(b_d)\end{array}\right].\]
Note that since $X$ is smooth and strictly convex then by Lemma \ref{l:smu}(\ref{l:smu4}), the duality map $\Gamma:X\to X^*$, $x\mapsto \varphi_x$, is a homeomorphism. It follows that $\theta$ is also a homeomorphism. 
Recall that the set $\I_{n\times d}(\bR)$ of independent $n\times d$ real matrices is open and dense in $M_{n\times d}(\bR)$. 
Thus $\theta^{-1}(\I_{n\times d}(\bR))$ is dense in $X^{n}$
and so there exists $y'=(y'_1, \ldots, y'_n) \in X^{n}$ such that $\|y_i - y'_i \| < \epsilon$ for each $1 \leq i \leq n$ and $\theta(y')$ is independent.
In particular, the linear functionals $\varphi_{y'_1}, \ldots, \varphi_{y'_n}$ are linearly independent, as required.
\endproof

In the following proposition, the set of regular placements of $G$ in $X$ is denoted $\text{Reg}(G;X)$.

\begin{proposition}
\label{p:0ext}
Let $X$ be a finite dimensional real normed linear space which is smooth and strictly convex and let $d= \dim X$.
Let $G=(V,E)$ be a graph and suppose $G'$ is obtained from $G$ by a $d$-dimensional $0$-extension on $S\subseteq V$, where $|S|= d$. Then $G$ is independent (resp. minimally rigid) in $X$ if and only if $G'$ is independent (resp. minimally rigid) in $X$.
\end{proposition}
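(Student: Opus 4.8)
\textbf{Proof proposal for Proposition \ref{p:0ext}.}

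The plan is to prove the two directions of the equivalence separately, using the fact that a $0$-extension adds one vertex $v$ joined to a set $S$ of exactly $d$ vertices, together with $d$ new edges $\{vw : w\in S\}$, which exactly increases the column count of the rigidity matrix by $d$ and the row count by $d$. The easy direction is ``$G'$ independent (resp.\ minimally rigid) $\Rightarrow$ $G$ independent (resp.\ minimally rigid)'': if $(G',p')$ is independent, then deleting the $d$ rows indexed by the new edges and the $d$ columns indexed by $(v,1),\dots,(v,d)$ leaves a matrix whose rows are still independent and which is (up to the zero columns coming from $v$, which no longer matter) the rigidity matrix $R(G,p)$ for the restricted placement $p = p'|_V$; one must also check $p$ is well-positioned, which is immediate since $p_v - p_w = p'_v - p'_w$ for $vw\in E$. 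The rank count $|E'| = |E| + d$ and $d|V'| = d|V| + d$ shows minimal rigidity transfers too, using the characterisation $\rank R = d|V| - d$ from Section \ref{s:rigidity} (recalling $q\neq 2$, or more generally that $\dim\T(p)$ is constant for the spaces under consideration — here I would invoke the standing hypothesis that $X$ is smooth and strictly convex and the relevant earlier statement that $\O_p$ is a smooth manifold whose tangent space dimension is an isometry-group quantity independent of $p$).

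For the substantive direction, ``$G$ independent $\Rightarrow$ $G'$ independent'', I would start from a regular independent placement $(G,p)$ of $G$ in $X$ and extend it by choosing a placement $p'_v$ for the new vertex $v$. Write the rigidity matrix of $(G',p')$, after reordering rows and columns, in block form
\[
R(G',p') = \begin{bmatrix} R(G,p) & 0 \\ * & D \end{bmatrix},
\]
where the bottom $d$ rows are indexed by the new edges $\{vw : w\in S\}$ and the last $d$ columns by $(v,1),\dots,(v,d)$, so that $D$ is the $d\times d$ matrix whose $w$-th row is $(\varphi_{p'_v - p_w}(b_1),\dots,\varphi_{p'_v - p_w}(b_d))$ for $w\in S$. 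Since $R(G,p)$ has independent rows, the rows of $R(G',p')$ are independent as soon as $D$ is invertible, i.e.\ as soon as the $d$ support functionals $\varphi_{p'_v - p_w}$, $w\in S$, are linearly independent in $X^*$. This is exactly where Lemma \ref{l:qhom2} applies: the vectors $\{p'_v - p_w : w\in S\}$ form a family of $|S| = d \leq \dim X$ vectors, so we may perturb them (equivalently, perturb $p'_v$ together with an arbitrarily small generic perturbation — or more cleanly, first fix the $p_w$ generic and then choose $p'_v$ so that the differences are generic) to make the corresponding support functionals linearly independent. A clean way to run this: apply Lemma \ref{l:qhom2} to the $d$ vectors $y_w := p'_v - p_w$; it yields nearby vectors $y'_w$ with $\varphi_{y'_w}$ independent, and since the map $p'_v \mapsto (p'_v - p_w)_{w\in S}$ is a homeomorphism onto an open subset of $X^S$ when the $p_w$ are in general position, one can realise the $y'_w$ as genuine differences $p'_v{}' - p_w$ for a single choice of $p'_v{}'$ — or one simultaneously perturbs the $p_w$ within $V$, which is harmless because regularity of $(G,p)$ is an open condition. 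Then $D$ is invertible, the rows of $R(G',p')$ are independent, and $(G',p')$ is independent in $X$.

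For minimal rigidity, once independence is established one only needs to match the edge count: if $G$ is minimally rigid then $|E| = d|V| - \dim\T$ (with $\dim\T$ the relevant constant), hence $|E'| = |E| + d = d|V'| - \dim\T$, and an independent framework attaining this rank bound is rigid by Proposition \ref{p:rigid}(iii). I expect the main obstacle to be the bookkeeping in the forward direction: one must be careful that the perturbation used to make $D$ invertible does not destroy the independence of $R(G,p)$ — this is resolved by noting both conditions are open and dense, so a common good choice of placement exists — and one must verify that $p'_v$ can be chosen so that $p'_v - p_w$ is smooth (hence $(G',p')$ well-positioned) for every $w\in S$; this follows because in a smooth space every nonzero point is smooth, and $p'_v \neq p_w$ is a generic (open, dense) condition. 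Everything else is a routine rank count using the block structure above.
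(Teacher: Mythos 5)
Your proposal is correct and follows essentially the same route as the paper: block-decompose $R(G',p')$ so that the new-edge rows end in a $d\times d$ block of support functionals, and use Lemma \ref{l:qhom2} together with openness of the set of regular placements to find a single placement for which $R(G,p)$ is independent and that block is invertible; the paper does this in one stroke by intersecting the open set $\Reg(G;X)$ with the dense set $X^{|V\setminus S|}\times\theta^{-1}(\GL_d(\bR))$, placing the new vertex at the origin, which also gives both directions of the equivalence from the same block structure rather than treating the converse by row/column deletion as you do (both are fine). One caveat: your first suggestion for realising the perturbed differences, namely that $p'_v\mapsto(p'_v-p_w)_{w\in S}$ is a homeomorphism onto an open subset of $X^S$, is false for $d\ge 2$ (the image is only a $d$-dimensional affine slice of $X^S$), so you cannot fix the $p_w$ and move only the new vertex; however, your stated fallback of perturbing the $p_w$ simultaneously, harmless because regularity/independence is an open condition, is exactly the paper's argument and closes the gap.
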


\proof
Let $\theta$ be the homeomorphism described in Lemma \ref{l:qhom2} for $n=d$.
Then $\theta^{-1}(\GL_d(\bR))$ is dense in $X^{d}$ where $\GL_d(\bR)$ denotes the general linear group of degree $d$ over $\bR$.
Note that the map, 
\[ \eta: X^{|V|}\to M_{|E|\times|V|}(\bR), \quad x \mapsto R(G,x),\]
is continuous. Since the rank function is lower semicontinuous, it follows that $\Reg(G;X)$ is open in $X^{|V|}$.
Thus the intersection 
\[ \Reg(G;X)\cap (X^{|V\backslash S|}\times \theta^{-1}(\GL_d(\bR))) \]
is  non-empty  in $X^{|V|}$.
Let $p=(p_1,p_2)$ be a point in this intersection and set 
\[ p'=(p_1,p_2,0)\in X^{|V\backslash S|}\times X^{d}\times X. \]
Here $p'$ describes a placement of $G'$ in $X$ in which $p_1$ is a placement of the vertices in $V\backslash S$, $p_2$ is a placement of the vertices in $S$, and the new vertex $v$ is placed at the origin.
After a suitable permutation of rows and columns, the rigidity matrix for $(G',p')$ takes the form,
\[R(G',p') = \begin{bmatrix}
R(G,p)&0 \\
C(p_2)&\theta(p_2)
\end{bmatrix}.\]
As $p_1\in  \Reg(G;X)$ and $\theta(p_2)$ is invertible it follows that $p'\in \Reg(G';X)$. 
Note that $R(G',p')$ is independent if and only if $R(G,p)$ is independent, and that $f_d(G')= f_d(G)$ so the result follows.
\endproof

\subsection{1-extensions}

\begin{definition}
Let $G=(V,E)$ be a graph containing vertices $v_1,\ldots,v_{d+1}$ and the edge $v_dv_{d+1}\in E$. 
Define $G'$ by setting, 
\begin{align*}
V(G')=V\cup \{v_0\}, \qquad E(G')= (E\backslash \{v_dv_{d+1}\})\cup\{v_0v_1,\ldots,v_0v_{d+1}\}.
\end{align*}
The graph $G'$ is said to be obtained from $G$ by a {\em $d$-dimensional 1-extension} on the vertices $v_1,\ldots,v_{d+1}\in V$ and the edge $v_dv_{d+1}\in E$; see Figure \ref{fig:1}.  
\end{definition}

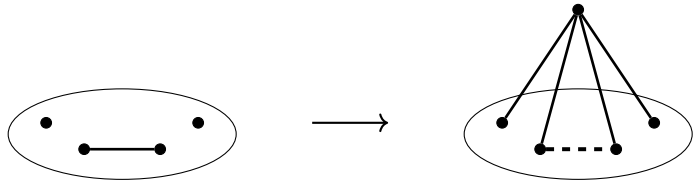
\begin{figure}[ht]
	\begin{tikzpicture}
		\node[vertex] (1) at (-5,0) {};
		%\node[vertex] (2) at (-6,0.25) {};
		\node[vertex] (3) at (-7,0) {};
		
		\node[vertex] (4) at (-5.5,-0.35) {};
		\node[vertex] (5) at (-6.5,-0.35) {};
		
		\draw[edge] (4)edge(5);
		
		\draw (-6,-0.15) ellipse (1.5cm and 0.6cm);
		
		\draw[thick,->] (-3.5,0) -- (-2.5,0);
		
		\node[vertex] (1') at (1,0) {};
		%\node[vertex] (2') at (0,0.25) {};
		\node[vertex] (3') at (-1,0) {};
		
		\node[vertex] (0') at (0,1.5) {};
		
		\node[vertex] (4') at (-0.5,-0.35) {};
		\node[vertex] (5') at (0.5,-0.35) {};
		
		\draw[dashedge] (4')edge(5');
		
		\draw (0,-0.15) ellipse (1.5cm and 0.6cm);
				
		\draw[edge] (0')edge(1');
		%\draw[edge] (0')edge(2');
		\draw[edge] (0')edge(3');
		\draw[edge] (0')edge(4');
		\draw[edge] (0')edge(5');
		
	\end{tikzpicture}
	\caption{An example of a $3$-dimensional $1$-extension.}
	\label{fig:1}
\end{figure}

\begin{proposition}
\label{p:1ext}
Let $X$ be a finite dimensional real normed linear space which is smooth and strictly convex and let $d= \dim X$.
Suppose $G'$ is obtained from $G=(V,E)$ by a $d$-dimensional $1$-extension.
If $G$ is independent in $X$
then $G'$ is independent in $X$. 
Further; if both $G$ and $G'$ are independent then $G$ is rigid in $X$ if and only if $G'$ is rigid in $X$.
\end{proposition}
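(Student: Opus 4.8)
The plan is to proceed in two parts, mirroring the structure of the statement. First I would prove that a $1$-extension preserves independence. Choose a placement $p$ of $G$ realising independence, with the additional genericity assumption (obtainable by Lemma \ref{l:qhom2} together with the openness of $\Reg(G;X)$, exactly as in the proof of Proposition \ref{p:0ext}) that the support functionals $\varphi_{p_{v_1}-z},\ldots,\varphi_{p_{v_{d+1}}-z}$ are linearly independent for generic $z\in X$, and that the row of $R(G,p)$ indexed by $v_dv_{d+1}$ is nonzero. The standard trick is to place the new vertex $v_0$ on the open line segment between $p_{v_d}$ and $p_{v_{d+1}}$, say at a point $z=p_{v_0}$ collinear with $p_{v_d},p_{v_{d+1}}$. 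Then, by the nature of the norm, $\varphi_{z-p_{v_d}}$ and $\varphi_{z-p_{v_{d+1}}}$ are negative scalar multiples of one another and of $\varphi_{p_{v_d}-p_{v_{d+1}}}$; consequently the $v_0v_d$ and $v_0v_{d+1}$ rows of $R(G',p')$ add (after scaling) to a vector supported on the columns of $v_d$ and $v_{d+1}$ which is a scalar multiple of the deleted $v_dv_{d+1}$ row of $R(G,p)$. A cokernel vector $\omega$ of $R(G',p')$ therefore restricts to a cokernel vector of $R(G,p)$ (with the $v_dv_{d+1}$-coordinate recovered from the common value $\omega_{v_0v_d}=\omega_{v_0v_{d+1}}$ up to the scalar), so $\omega$ vanishes on $E$; then the linear independence of $\varphi_{z-p_{v_1}},\ldots,\varphi_{z-p_{v_{d+1}}}$, read off from the columns of $v_0$, forces $\omega_{v_0v_i}=0$ for all $i$. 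Hence $\omega=0$ and $(G',p')$ is independent.

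For the second part, suppose both $G$ and $G'$ are independent. Since $f_d(G')=f_d(G)$ and, for $q\not=2$ (more generally in a non-Euclidean smooth strictly convex space) rigidity of an independent framework is equivalent to $|E|=d|V|-d$ by the rank count recorded after Proposition \ref{p:rigid}, independence already forces $|E(G')|\le d|V(G')|-\dim\T$ with equality iff rigid; so it suffices to observe that $|E(G)|=d|V|-\dim\T(p)$ exactly when $|E(G')|=d|V(G')|-\dim\T(p')$. But $|E(G')|-|E(G)|=d$ and $|V(G')|-|V(G)|=1$, and $\dim\T$ is the fixed constant $d$ (respectively $\dim X$) independent of the graph by \cite[Lemma 2.3]{kit-pow}, so the two rank-maximality conditions are literally the same equation. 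Thus $G$ is rigid iff $G'$ is. (Alternatively, and more robustly if one does not wish to invoke the dimension count, one argues directly: an infinitesimal flex of $(G,p)$ extends to one of $(G',p')$ by assigning $v_0$ the unique velocity compatible with the two new edges — solvable because the $d+1$ functionals $\varphi_{z-p_{v_i}}$ span $X^*$ — and conversely an infinitesimal flex of $(G',p')$ restricts to one of $(G,p)$ since the deleted edge constraint is implied by the two new ones along the collinear configuration; this gives a bijection between the nontrivial flex spaces.)

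The main obstacle I anticipate is justifying that the collinear placement of $v_0$ is compatible with $(G',p')$ being \emph{regular}, i.e.\ that this special placement attains the maximal rank. In the Euclidean theory this is the familiar subtlety that the $1$-extension move only works at a generic point and one must perturb; here one must combine the collinearity (needed to make the two new rows combine into the old deleted row) with genericity in the remaining degrees of freedom. The resolution is the same density argument used in Proposition \ref{p:0ext}: the collinearity constraint cuts out a positive-dimensional affine subvariety of placements of $v_0$, and within it — together with an open dense set of placements of $v_1,\ldots,v_{d+1}$ supplied by Lemma \ref{l:qhom2} and the openness of $\Reg(G;X)$ — the rank of $R(G',p')$ is maximal, because we have exhibited at least one placement (the one constructed above) where the cokernel is trivial and hence the rank is full, which is the maximum possible given $G'$ is $(d,d)$-tight-or-sparse over its support. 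I would also need the elementary norm-geometry fact that along a ray through a point $z$, the support functionals at $z-p_{v_d}$ and $z-p_{v_{d+1}}$ are positive multiples of $\varphi_{p_{v_{d+1}}-p_{v_d}}$ — immediate from $\varphi_{\lambda x}=\lambda\varphi_x$ for $\lambda>0$ and $\varphi_{-x}=-\varphi_x$ — which is where strict convexity and smoothness (via Lemma \ref{l:smu}) enter to guarantee these functionals are well defined and behave linearly under scaling.
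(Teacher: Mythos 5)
Your overall strategy is the paper's: place the new vertex $v_0$ collinear with $p_{v_d}$ and $p_{v_{d+1}}$ (the paper translates so that $p_{v_{d+1}}=-p_{v_d}$ and puts $v_0$ at the origin, i.e.\ the midpoint case), so that the two new rows recombine into the deleted row and a cokernel vector of $R(G',p')$ passes down to one of $R(G,p)$. However, as written there is a genuine flaw at the key step. You assume, and later crucially use, that the $d+1$ support functionals $\varphi_{p_{v_1}-z},\ldots,\varphi_{p_{v_{d+1}}-z}$ are linearly independent; this is impossible, since they lie in the $d$-dimensional space $X^*$, and in your own configuration $\varphi_{z-p_{v_d}}$ and $\varphi_{z-p_{v_{d+1}}}$ are parallel (negative multiples of one another) precisely because of the collinearity you impose. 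Hence the final step ``forces $\omega_{v_0v_i}=0$ for all $i$'' does not go through as stated. Relatedly, the identity $\omega_{v_0v_d}=\omega_{v_0v_{d+1}}$ (in the general collinear case the weighted relation $t\,\omega_{v_0v_d}=(1-t)\,\omega_{v_0v_{d+1}}$) is asserted before it is proved, yet it is exactly what is needed for the restricted vector to be consistent at both the $v_d$ and $v_{d+1}$ columns of $R(G,p)$.

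The repair is the paper's deduction order: using Lemma \ref{l:qhom2} and openness of the set of independent placements, one only needs the $d$ functionals $\varphi_{p_{v_0}-p_{v_1}},\ldots,\varphi_{p_{v_0}-p_{v_{d-1}}}$ together with $\varphi_{p_{v_d}-p_{v_{d+1}}}$ to be linearly independent. The $v_0$-column equation then gives $\omega_{v_0v_1}=\cdots=\omega_{v_0v_{d-1}}=0$ and the proportionality between $\omega_{v_0v_d}$ and $\omega_{v_0v_{d+1}}$; only after that can you fold these two coefficients into a single coefficient on the deleted edge $v_dv_{d+1}$ and invoke the independence of $R(G,p)$ --- and it is this invocation (the deleted-edge row being part of an independent matrix), not any independence of $v_0$-column functionals, that finally kills $\omega_{v_0v_d}$ and $\omega_{v_0v_{d+1}}$. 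Your worry about regularity is a red herring: the statement only requires exhibiting one independent placement of $G'$, so no maximal-rank or perturbation argument is needed beyond choosing $p$ in the open set of independent placements. The second half of your argument (rigidity transfers because $f_d(G')=f_d(G)$ once both graphs are independent) is essentially what the paper does, though for a general smooth, strictly convex $X$ you should argue via Proposition \ref{p:rigid} rather than cite \cite[Lemma 2.3]{kit-pow}, which concerns $\ell_q^d$ only.
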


\begin{proof}
Let $v_0$ be the unique vertex in $V(G') \setminus V$, let $v_0 v_1, \ldots, v_0 v_{d+1} \in E(G')$ be the added edges for distinct $v_1, \ldots ,v_{d+1} \in V$, and let $v_d v_{d+1}$ be the deleted edge. 
If $G$ is independent in $X$ then there exists a placement $p$ of $G$ in $X$ for which $(G,p)$ is independent.
By translating the framework $(G,p)$ we may assume without loss of generality that 
$p_v\not=0$ for all $v\in V$ and $p_{v_{d+1}} = - p_{v_d}$.
By Lemma \ref{l:qhom2}, and since the set of independent placements of $G$ is open in $X^{V}$, we may also assume that the linear functionals $\varphi_{p_{v_1}}, \ldots, \varphi_{p_{v_d}}$ are linearly independent.
Define a placement $p'$ of $G'$ in $X$ by setting $p'_v=p_v$ for all $v\in V$ and
$p'_{v_0} =0$. We claim that $(G',p')$ is independent in $X$.

Suppose $a=(a_{e})_{e\in E(G')} \in \mathbb{R}^{E(G')}$ is a linear dependence on the rows of $R(G',p')$. From the entries of the $v_0$-column of $R(G',p')$ we obtain,
\[ \sum_{i=1}^{d-1} a_{v_0 v_i}  \varphi_{p_{v_i}} +(a_{v_0 v_{d}} - a_{v_0 v_{d+1}}) \varphi_{p_{v_d}}  
= -\sum_{i=1}^{d+1} a_{v_0 v_i} \varphi_{p'_{v_0} - p'_{v_i}}  
= 0.\]
Thus, since $\varphi_{p_{v_1}}, \ldots, \varphi_{p_{v_d}}$ are linearly independent,  
we have $a_{v_0 v_1} = \ldots = a_{v_0 v_{d-1}} = 0$ and $a_{v_0 v_{d}} =  a_{v_0 v_{d+1}}$.  
Define $b=(b_{e})_{e\in E} \in \mathbb{R}^{E}$ with $b_{e} = a_{e}$ for $e \neq v_d v_{d+1}$ and $b_{v_d v_{d+1}} = \frac{1}{2}a_{v_0 v_{d}}$. Then $b$ is a linear dependence on the rows of $R(G, p)$.
Thus $b=0$ as $(G, p)$ is independent. 
It now follows that $a=0$ and so $(G',p')$ is independent, as required.

The final statement of the proposition follows since $f_d(G') = f_d(G)$. 
\end{proof}

\subsection{Vertex splitting}

\begin{definition}\label{vertexsplitdef}
Let $G=(V,E)$ be a graph containing a vertex $v_0 \in V$ and edges $v_0 v_i \in E$ for $i=1, \ldots, d-1$. 
Let $G'$ be a graph obtained from $G$ by the following process:
\begin{enumerate}[(i)]
\item adjoin a new vertex $w_0$ to $G$ together with the edges $w_0 v_0, w_0 v_1, \ldots, w_0 v_{d-1}$, 
\item for every edge of the form $v_0 w$ in $E$, where $w \notin\{v_1,\ldots, v_{d-1}\}$, either leave the edge as it is or replace it with the edge $w_0 w$. 
\end{enumerate}
The graph $G'$ is said to be obtained from $G$ by a {\em d-dimensional vertex split} at the vertex $v_0\in V$ and edges $v_0 v_1, \ldots, v_0 v_{d-1}\in E$; see Figure \ref{fig:vsplit}.  
\end{definition}

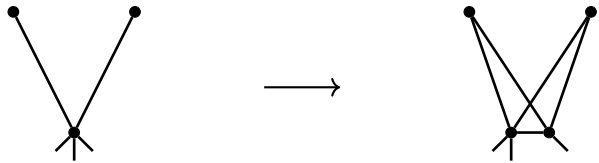
\begin{figure}[ht]
	\begin{tikzpicture}
		\node[vertex] (0) at (-6,0) {};
		\node[vertex] (1) at (-6.8,1.6) {};
		\node[vertex] (2) at (-5.2,1.6) {};
		
		\node[blankvertex] (n1) at (-6.3,-0.3) {};
		\node[blankvertex] (n2) at (-6,-0.45) {};
		\node[blankvertex] (n3) at (-5.7,-0.3) {};
		
		\draw[edge] (0)edge(1);
		\draw[edge] (0)edge(2);	

		\draw[edge] (0)edge(n1);
		\draw[edge] (0)edge(n2);	
		\draw[edge] (0)edge(n3);	
			
		\draw[thick,->] (-3.5,0.6) -- (-2.5,0.6);
		
		\node[blankvertex] (m1) at (-0.55,-0.3) {};
		\node[blankvertex] (m2) at (-0.25,-0.45) {};
		\node[blankvertex] (m3) at (0.55,-0.3) {};

		\node[vertex] (0') at (-0.25,0) {};
		\node[vertex] (0'') at (0.25,0) {};
		\node[vertex] (1') at (-0.8,1.6) {};
		\node[vertex] (2') at (0.8,1.6) {};
		
		\draw[edge] (0')edge(1');
		\draw[edge] (0')edge(2');
		\draw[edge] (0'')edge(1');
		\draw[edge] (0'')edge(2');
		\draw[edge] (0')edge(0'');
		
		\draw[edge] (0')edge(m1);
		\draw[edge] (0')edge(m2);	
		\draw[edge] (0'')edge(m3);	
	\end{tikzpicture}
	\caption{A $3$-dimensional vertex split.}
	\label{fig:vsplit}
\end{figure}

For a graph $G=(V,E)$ and a vertex $v\in V$, we will use $N_G(v)$, or $N(v)$ when the context is clear, to denote the set of neighbours of $v$ in $G$.

\begin{proposition}
\label{p:split}
Let $X$ be a smooth and strictly convex normed space with dimension $d$.
Suppose $G'$ is a $d$-dimensional vertex split of $G$. If $G$ is independent in $X$ then $G'$ is independent in $X$. Further; if both $G$ and $G'$ are independent then $G'$ is rigid in $X$ if and only if $G$ is rigid in $X$.
\end{proposition}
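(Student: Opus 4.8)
The plan is to follow the template of Proposition \ref{p:1ext}. A $d$-dimensional vertex split adds one vertex and $d$ edges, so $f_d(G')=f_d(G)$; hence once independence of $G'$ has been established, the rigidity equivalence follows exactly as in Proposition \ref{p:1ext}. I therefore concentrate on transferring independence. Write $w_0$ for the new vertex, let $v_1,\dots,v_{d-1}$ be the common neighbours of $v_0$ and $w_0$ in $G'$, and let $W=N_{G'}(w_0)\setminus\{v_0,v_1,\dots,v_{d-1}\}$ be the set of ``moved'' neighbours, so that for each $w\in W$ the edge $v_0w\in E$ has been replaced in $G'$ by $w_0w$.

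Start with an independent placement $(G,p)$. First I would make the split well-conditioned: applying Lemma \ref{l:qhom2} to the vectors $p_{v_0}-p_{v_1},\dots,p_{v_0}-p_{v_{d-1}}$ and using that the independent placements of $G$ form an open subset of $X^V$, perturb $p_{v_1},\dots,p_{v_{d-1}}$ (keeping $p_{v_0}$ fixed) so that $(G,p)$ remains independent and $\varphi_{p_{v_0}-p_{v_1}},\dots,\varphi_{p_{v_0}-p_{v_{d-1}}}$ are linearly independent in $X^*$. As $X$ is smooth and strictly convex, $\Gamma\colon X\to X^*$ is a surjective homeomorphism (Lemma \ref{l:smu}), so I may choose $u\in X$ with $\varphi_u$ outside the hyperplane spanned by those $d-1$ functionals; then $\varphi_u,\varphi_{p_{v_0}-p_{v_1}},\dots,\varphi_{p_{v_0}-p_{v_{d-1}}}$ form a basis of $X^*$. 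For $t>0$ define the placement $p^{(t)}$ of $G'$ by $p^{(t)}_v=p_v$ for $v\in V$ and $p^{(t)}_{w_0}=p_{v_0}+tu$; for small $t$ this is a valid placement, automatically well-positioned since $X$ is smooth.

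The heart of the proof is a limiting rank argument. Let $R_t$ denote $R(G',p^{(t)})$ with the single row indexed by $v_0w_0$ scaled by $1/t$; rescaling a row by $t\neq0$ preserves rank, so it suffices to produce one $t>0$ with $R_t$ of full row rank. Since $\varphi_{\pm tu}=\pm t\,\varphi_u$ and $\Gamma$ is continuous (Lemma \ref{l:smu}(i)), the matrices $R_t$ converge as $t\to0^+$, entrywise, to a matrix $R_0$: the rigidity matrix of $G'$ with $w_0$ formally placed at $p_{v_0}$, except that the $v_0w_0$ row has entries $-\varphi_u(b_k)$ in the $v_0$-block and $\varphi_u(b_k)$ in the $w_0$-block. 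By lower semicontinuity of rank it is enough to show $R_0$ has independent rows. Given a linear dependence $a=(a_e)_{e\in E(G')}$ on the rows of $R_0$, I would read off the $v_0$-block and $w_0$-block identities of $a^{\mathsf{T}}R_0=0$; adding them, the two contributions of $\varphi_u$ coming from the $v_0w_0$ row cancel, leaving an identity in $X^*$ supported on the edges of $G$ incident to $v_0$. Using this identity one defines $b=(b_e)_{e\in E}$ by $b_{v_0v_i}=a_{v_0v_i}+a_{w_0v_i}$ for $1\le i\le d-1$, $b_{v_0w}=a_{w_0w}$ for $w\in W$, and $b_e=a_e$ otherwise; a block-by-block check — the $v_0$-block being the identity just derived, and every other block using that the functionals at $w_0$ in $R_0$ coincide with the matching ones at $v_0$ — shows $b$ is a linear dependence on the rows of $R(G,p)$, hence $b=0$. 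This forces $a_e=0$ for every edge not incident to $v_0$ or $w_0$, $a_{v_0w}=0$ for unmoved neighbours $w$, $a_{w_0w}=0$ for $w\in W$, and $a_{v_0v_i}=-a_{w_0v_i}$. Feeding these vanishings back into the $w_0$-block identity gives $a_{w_0v_0}\varphi_u+\sum_{i=1}^{d-1}a_{w_0v_i}\varphi_{p_{v_0}-p_{v_i}}=0$, and since these $d$ functionals are a basis of $X^*$ we obtain $a_{w_0v_0}=0$ and $a_{w_0v_i}=0$ for all $i$, hence $a=0$.

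I expect the main difficulty to be bookkeeping rather than conceptual: correctly identifying the limit matrix $R_0$ (especially the rescaling of the degenerating $v_0w_0$ row and the convergence $R_t\to R_0$) and verifying that the contracted vector $b$ is genuinely a dependence for $R(G,p)$ at the $v_i$-blocks and the moved-neighbour blocks. The substantive inputs are the smoothness of $X$ — so that $\Gamma$ is onto, giving the freedom to choose the split direction $u$ with $\varphi_u$ completing a basis of $X^*$ — together with Lemma \ref{l:qhom2} and the openness of the independent locus.
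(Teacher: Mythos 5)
Your proposal is correct and takes essentially the same route as the paper's proof: both establish independence of the degenerate matrix obtained by formally placing $w_0$ at $p_{v_0}$ with the $v_0w_0$ row given by $\pm\varphi_u$ (the paper's matrix $R$ with $y=u$), via the same contraction of a row dependence to a dependence on $R(G,p)$ and the linear independence of $\varphi_u,\varphi_{p_{v_0}-p_{v_1}},\dots,\varphi_{p_{v_0}-p_{v_{d-1}}}$, and then pass to the true placement $p'_{w_0}=p_{v_0}+t u$ for small $t$ using continuity of the duality map and semicontinuity of rank. The only cosmetic differences are that the paper applies Lemma \ref{l:qhom2} to $y$ together with the $d-1$ difference vectors instead of invoking surjectivity of $\Gamma$, and packages the perturbation step through the explicitly scaled matrix $R_\epsilon$ rather than your limit $R_t\to R_0$.
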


\begin{proof}
Let $v_0,w_0,v_1, \ldots, v_{d-1}$ be as described in Definition \ref{vertexsplitdef}. 
Since $G$ is independent in $X$ there exists a placement $p\in X^V$ of $G$ in $X$ such that $R(G,p)$  is independent. 
Choose $y \in X\backslash\{0\}$. 
By Lemma \ref{l:qhom2}, and since the set of independent placements of $G$ is open in $X^V$, we may assume that the linear functionals $\varphi_y, \varphi_{p_{v_0} - p_{v_1}}, \ldots, \varphi_{p_{v_0} - p_{v_{d-1}}}$ are linearly independent. 
Write $E(G')=E_1\cup E_2 \cup \{v_0w_0\}$ where $E_1$ consists of all edges in $G'$ which are not incident with $w_0$ and $E_2$ consists of all edges in $G'$ of the form $vw_0$ with $v \neq v_0$.
Fix a basis $b_1, \ldots,b_d$ for $X$ and define $R$ to be the $|E(G')| \times d|V(G')|$ matrix with non-zero row entries as described below and zero entries everywhere else,
\begin{align*}
\bbordermatrix{ &  & \scriptstyle{(v,i)} &   & \scriptstyle{(w,i)} &  & \scriptstyle{(v_0,i)} &  & \scriptstyle{(w_0,i)} &   \cr	
\scriptscriptstyle{vw \in E_1} & \ldots & \varphi_{p_v-p_w}(b_i) & \ldots & -\varphi_{p_v-p_w}(b_i) & \ldots & \ldots & \ldots & \ldots & \ldots \cr
\scriptscriptstyle{v w_{0} \in E_2} & \ldots & \varphi_{p_v-p_{v_0}}(b_i) & \ldots & \ldots & \ldots & \ldots & \ldots & -\varphi_{p_v-p_{v_0}}(b_i) & \ldots \cr
\scriptscriptstyle{v_0 w_{0}} & \ldots & \ldots & \ldots & \ldots & \ldots & \varphi_y(b_i) & \ldots & -\varphi_y(b_i) & \ldots \cr}
\end{align*}
Suppose $a \in \mathbb{R}^{E(G')}$ is a linear dependence on the rows of $R$.
Define $b \in \mathbb{R}^{E}$, where 
\begin{align*}
b_{vw} := 
\begin{cases}
a_{v_0 v_i} + a_{w_0 v_i} & \text{if } v w = v_0 v_i \text{ for any } i =1, \ldots, d-1, \\
a_{v w_0} & \text{if } v w = v v_0 \text{ but } v v_0 \notin E(G'),\\
a_{vw} & \text{otherwise.}
\end{cases}
\end{align*}
If $v\not= v_0$ then note that 
$\sum_{w \in N_G(v)} b_{v w} \varphi_{p_v-p_w} = \sum_{w \in N_{G'}(v)} a_{v w} \varphi_{p_v-p_w}=0.$
Also note that $\sum_{w \in N_G(v_0)} b_{v w} \varphi_{p_{v_0}-p_w}=A+B$ where,
\[A=a_{v_0 w_0} \varphi_y + \sum_{w \in N_{G'}(v_0) \setminus \{w_0\}} a_{v_0 w} \varphi_{p_{v_0}-p_w}  =0,\]
\[B=- a_{v_0 w_0} \varphi_y + \sum_{w \in N_{G'}(w_0) \setminus \{v_0\}} a_{w_0 w} \varphi_{p_{v_0}-p_w}=0.\]
Thus if $b\not=0$ then $b$ is a linear dependence on the rows of $R(G,p)$, a contradiction. We conclude that $b=0$.
In particular,  we have $a_{v_0 v_i}= -a_{w_0 v_i}$ for all $i=1,\ldots,d-1$ and  
$a_{vw}=0$ for all edges $vw$ in $E(G')\backslash \{v_0w_0,v_0v_i,w_0v_i:i=1,\ldots,d-1\}$.
As $\varphi_y,\varphi_{p_{v_0} - p_{v_1}}, \ldots, \varphi_{p_{v_0} - p_{v_{d-1}}}$ are linearly independent then by observing how the linear dependence acts on the $v_0$ columns of $R$ we obtain $a_{v_0w_0}=0$ and $a_{v_0v_i}=0$ for all $i=1,\ldots, d-1$. Thus $a=0$ and so $R$ is independent.

Let $\epsilon>0$ and let $R_\epsilon$ denote the independent matrix obtained by multiplying the entries of the $v_0w_0$ row of $R$ by $\epsilon$. Define a placement $p'$ of $G'$ in $X$ by setting $p'_v=p_v$ for all $v\in V$ and $p'_{w_0} =p_{v_0} +\epsilon y$. 
Note that for each edge $v_0w\in E(G)$, $p_{v_0}-p_w$ is a smooth point of $X$. Thus, using Lemma \ref{l:smu}, it follows that for $\epsilon$ sufficiently small, the rigidity matrix $R(G',p')$ will lie in an open neighbourhood of $R_\epsilon$ consisting of independent matrices. We conclude that $G'$ is independent in $X$.

The final statement of the proposition follows since $f_d(G') = f_d(G)$. 
\end{proof}

\begin{remark}
There is a natural variant of vertex splitting known as \emph{spider splitting}. In this version, $d$ vertices adjacent to $v_0$ become adjacent to both $v_0$ and $w_0$ but there is no edge between $v_0$ and $w_0$, see Figure \ref{fig:spsplit}.  With a simplified version of the proof of Proposition \ref{p:split} above we obtain the analogous result. The 2-dimensional spider split has been considered in Euclidean contexts under other names such as the vertex-to-4-cycle move  \cite{NR}.
\end{remark}

\begin{figure}[ht]
	\begin{tikzpicture}
		\node[vertex] (0) at (-6,0) {};
		\node[vertex] (1) at (-7,1.6) {};
		\node[vertex] (2) at (-5,1.6) {};
		\node[vertex] (3) at (-6,1.8) {};
		
		\node[blankvertex] (n1) at (-6.3,-0.3) {};
		\node[blankvertex] (n2) at (-6,-0.45) {};
		\node[blankvertex] (n3) at (-5.7,-0.3) {};
		
		\draw[edge] (0)edge(1);
		\draw[edge] (0)edge(2);
		\draw[edge] (0)edge(3);	

		\draw[edge] (0)edge(n1);
		\draw[edge] (0)edge(n2);	
		\draw[edge] (0)edge(n3);	
			
		\draw[thick,->] (-3.5,0.6) -- (-2.5,0.6);
		
		\node[blankvertex] (m1) at (-0.55,-0.3) {};
		\node[blankvertex] (m2) at (-0.25,-0.45) {};
		\node[blankvertex] (m3) at (0.55,-0.3) {};

		\node[vertex] (0') at (-0.25,0) {};
		\node[vertex] (0'') at (0.25,0) {};
		\node[vertex] (1') at (-1,1.6) {};
		\node[vertex] (2') at (1,1.6) {};
		\node[vertex] (3') at (0,1.8) {};
		
		\draw[edge] (0')edge(1');
		\draw[edge] (0')edge(2');
		\draw[edge] (0')edge(3');
		\draw[edge] (0'')edge(1');
		\draw[edge] (0'')edge(2');
		\draw[edge] (0'')edge(3');
		
		\draw[edge] (0')edge(m1);
		\draw[edge] (0')edge(m2);	
		\draw[edge] (0'')edge(m3);	
	\end{tikzpicture}
	\caption{A $3$-dimensional spider split.}
	\label{fig:spsplit}
\end{figure}
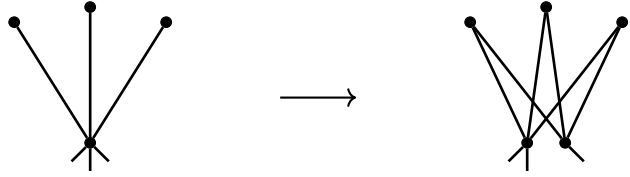

\subsection{Graph substitution} 

\begin{definition}
Let $G$ and $H$ be graphs and choose $v_0\in V(G)$. 
A graph $G'$ is obtained from $G$ by a {\em vertex-to-$H$ substitution} at $v_0$ if it is formed by replacing the vertex $v_0 \in V(G)$ with $V(H)$, adding the edges $E(H)$ and changing each edge $v_0w \in E(G)$ to $v w$ for some $v \in V(H)$. See Figure \ref{fig:sub} for an example of a vertex-to-$K_4$ substitution applied to a wheel graph.
\end{definition}

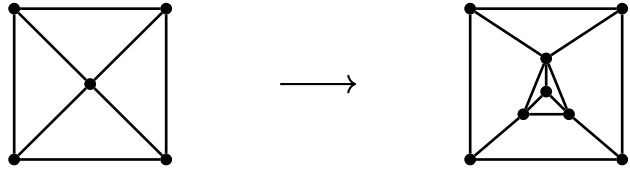
\begin{figure}[ht]
	\begin{tikzpicture}
		\node[vertex] (0) at (-6,0) {};
		\node[vertex] (1) at (-5,1) {};
		\node[vertex] (2) at (-5,-1) {};
		\node[vertex] (3) at (-7,-1) {};
		\node[vertex] (4) at (-7,1) {};
		
		\draw[edge] (0)edge(1);
		\draw[edge] (0)edge(2);
		\draw[edge] (0)edge(3);	
		\draw[edge] (0)edge(4);	
		
		\draw[edge] (1)edge(2);
		\draw[edge] (2)edge(3);
		\draw[edge] (3)edge(4);
		\draw[edge] (4)edge(1);
		
		\draw[thick,->] (-3.5,0) -- (-2.5,0);

		\node[vertex] (1') at (1,1) {};
		\node[vertex] (2') at (1,-1) {};
		\node[vertex] (3') at (-1,-1) {};
		\node[vertex] (4') at (-1,1) {};
		
		\node[vertex] (00) at (0,-0.1) {};
		\node[vertex] (01) at (0,0.34) {};
		\node[vertex] (02) at (0.3,-0.4) {};
		\node[vertex] (03) at (-0.3,-0.4) {};
		
		\draw[edge] (00)edge(01);
		\draw[edge] (00)edge(02);
		\draw[edge] (00)edge(03);
		\draw[edge] (01)edge(02);
		\draw[edge] (01)edge(03);
		\draw[edge] (02)edge(03);
				
		\draw[edge] (01)edge(1');
		\draw[edge] (02)edge(2');
		\draw[edge] (03)edge(3');	
		\draw[edge] (01)edge(4');	
		
		\draw[edge] (1')edge(2');
		\draw[edge] (2')edge(3');
		\draw[edge] (3')edge(4');
		\draw[edge] (4')edge(1');
	\end{tikzpicture}
	\caption{A vertex-to-$K_4$ substitution at the center vertex of the wheel graph on $5$ vertices. This graph operation will preserve rigidity in any non-Euclidean $2$-dimensional normed space \cite[Lemma 5.5]{dew2}.}
	\label{fig:sub}
\end{figure}

Recall that $\T(p)$ denotes the tangent space at $p$ of $\O_p$; the smooth manifold of placements isometric to $p$. Our next result shows that the vertex-to-$H$ substitution move preserves independence for a normed space $X$ whenever $H$ is independent. 

\begin{proposition}
\label{p:sub}
Let $X$ be a normed space with dimension $d$ and suppose that the set of smooth points of $X$ form an open subset.
Suppose $G'$ is obtained from $G$ by a vertex-to-$H$ substitution at $v_0$. 
If $G$ and $H$ are independent in $X$ then $G'$ is independent in $X$. 
Further; if $\dim \T(r) = d$ for any placement $r$ of $H$ and $H$ is rigid in $X$ then $G'$ is rigid in $X$ if and only if $G$ is rigid in $X$.
\end{proposition}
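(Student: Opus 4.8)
The plan is to place a tiny independent copy of $H$ at the former position of $v_0$ and to read off the block structure of the rigidity matrix in the limit as the copy shrinks to a point. First I would choose $p\in\Reg(G;X)$ and $q\in\Reg(H;X)$; since $\S(X)$ is open, these placements are independent. For $\epsilon>0$ I define a placement $p^{(\epsilon)}$ of $G'$ by $p^{(\epsilon)}_w=p_w$ for $w\in V\setminus\{v_0\}$ and $p^{(\epsilon)}_v=p_{v_0}+\epsilon q_v$ for $v\in V(H)$. An edge inside the copy of $H$ then has difference vector $\epsilon(q_v-q_{v'})$, an edge of $G$ away from $v_0$ is unchanged, and a rerouted edge $vw$ (with $v\in V(H)$, $w\in V\setminus\{v_0\}$) has difference vector $p_{v_0}-p_w+\epsilon q_v\to p_{v_0}-p_w$; since each of these limits is smooth and $\S(X)$ is open, $p^{(\epsilon)}$ is a well-positioned placement of $G'$ for every sufficiently small $\epsilon$.

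Next I would partition the rows of $R(G',p^{(\epsilon)})$ into the edges $E^-$ of $G$ not incident with $v_0$, the rerouted edges $E_0'$, and $E(H)$, and the columns into those indexed by $V\setminus\{v_0\}$ and those indexed by $V(H)$. Since $\varphi_{\epsilon x}=\epsilon\varphi_x$ and every edge of $H$ lies inside $V(H)$, the $E(H)$-block equals $\epsilon R(H,q)$ on the $V(H)$-columns and vanishes elsewhere. Rescaling the $E(H)$-rows by $\epsilon^{-1}$ does not change the rank, and by continuity of $\Gamma$ (Lemma \ref{l:smu}(i)) the rescaled matrix converges as $\epsilon\to0$ to
\[
M_0=\begin{bmatrix} R_1 & 0\\ S_0 & T_0\\ 0 & R(H,q)\end{bmatrix},
\]
whose $E^-\cup E_0'$ rows are assembled from the functionals $\varphi_{p_v-p_w}$ and, for rerouted edges, $\varphi_{p_{v_0}-p_w}$. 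By lower semicontinuity of rank it then suffices to prove $\rank M_0=|E(G')|=|E(G)|+|E(H)|$. To do this I fix $v^*\in V(H)$ and perform the invertible column operation replacing column $(v^*,k)$ by $\sum_{v\in V(H)}(v,k)$ for each $k$. Because the rows of $R(H,q)$ annihilate the $d$ translation vectors (the vector placing $b_k$ at every vertex of $H$), the $E(H)$-rows become zero in the merged columns and, in the columns of $V(H)\setminus\{v^*\}$, equal $R(H,q)$ with the $d$ columns of $v^*$ deleted; this deletion preserves rank since those columns are minus the sum of the others, so the block still has full row rank $|E(H)|$. Meanwhile, on the columns of $(V\setminus\{v_0\})$ together with the merged columns, the $E^-\cup E_0'$ rows become exactly $R(G,p)$ — a rerouted edge $vw$ contributing the $v_0w$-row of $R(G,p)$, with the merged column in the role of the $v_0$-columns — while the $E(H)$-rows vanish there. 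Hence $M_0$ is, after reordering, block upper triangular with diagonal blocks $R(G,p)$ and a full-row-rank truncation of $R(H,q)$, giving $\rank M_0=|E(G)|+|E(H)|$. Thus $(G',p^{(\epsilon)})$ is independent for small $\epsilon$, so $G'$ is independent in $X$.

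For the rigidity statement I would first observe that the hypothesis forces $\dim\Isom(X)=d$: for a placement $r$ of $H$ with two distinct coordinates the $\Isom(X)$-stabiliser of $r$ is finite, so $\dim\T(r)=\dim\Isom(X)$, which equals $d$ by assumption. Consequently $\dim\T(u)=d$ for every placement $u$ of every graph having at least two distinct coordinates, in particular for $p$, $q$ and $p^{(\epsilon)}$. Since $H$ is independent and rigid, Proposition \ref{p:rigid}(iii) gives $|E(H)|=\rank R(H,q)=d|V(H)|-d$, i.e. $f_d(H)=d$; combined with $|V(G')|=|V|+|V(H)|-1$ and $|E(G')|=|E(G)|+|E(H)|$ this yields $f_d(G')=f_d(G)+f_d(H)-d=f_d(G)$. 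As $p^{(\epsilon)}$ is regular and $(G',p^{(\epsilon)})$ independent, Proposition \ref{p:rigid}(iii) makes $(G',p^{(\epsilon)})$ rigid equivalent to $f_d(G')=d$; similarly $(G,p)$ rigid is equivalent to $f_d(G)=d$; and since all regular placements of a fixed graph share the same rigidity-matrix rank and here the same value $d$ of $\dim\T$, these are equivalent to $G'$, resp. $G$, being rigid in $X$. Now $f_d(G')=f_d(G)$ gives the stated equivalence.

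The main obstacle I expect is the rank computation for the limiting matrix $M_0$: arranging the column-merging operation so that $M_0$ becomes block triangular, and in particular verifying that deleting one vertex's columns from the independent matrix $R(H,q)$ does not lower its rank. The remaining points — well-positionedness of $p^{(\epsilon)}$, openness of $\Reg(\cdot\,;X)$, and extracting $\dim\Isom(X)=d$ from the hypothesis on $H$ — should be routine.
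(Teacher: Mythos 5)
Your treatment of the independence statement is correct and follows essentially the paper's own route: the same shrinking placement $p'_v=p_{v_0}+\epsilon r_v$ on $V(H)$, the same limiting matrix (the paper writes it down directly, with rows built from $\varphi_{p_v-p_w}$, $\varphi_{r_v-r_w}$ and $\varphi_{p_{v_0}-p_w}$), and the same perturbation step using continuity of the duality map (Lemma \ref{l:smu}(i)) together with openness of the full-row-rank condition. The only real difference is how full row rank of the limit matrix is verified: you merge the $V(H)$-columns and exhibit a block-triangular form with diagonal blocks $R(G,p)$ and a column-truncated $R(H,r)$ (the truncation argument via ``deleted columns lie in the span of the remaining ones'' is fine), whereas the paper chases a hypothetical row dependence, collapsing it first to a dependence on $R(G,p)$ and then to one on $R(H,r)$. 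These are two phrasings of the same computation, both resting on the fact that the $E(H)$-rows annihilate translations.

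The rigidity part, however, contains a false step. You claim that in any normed space (with open set of smooth points) the $\Isom(X)$-stabiliser of a placement with two distinct coordinates is finite, and deduce $\dim\T(r)=\dim\Isom(X)$. This is not true in general: for a cylinder-type norm on $\bR^3$, say $\|x\|=\max\{(x_1^2+x_2^2)^{1/2},|x_3|\}$, the smooth points form an open set, yet rotations about the $x_3$-axis fix every point of that axis, so a placement with two distinct points on the axis has a one-dimensional stabiliser. Hence $\dim\Isom(X)=d$ does not follow by the argument you give. The conclusion you need is nevertheless recoverable from the stated hypotheses: if $|V(H)|\geq 2$ then, as you compute, $|E(H)|=d|V(H)|-d\leq\binom{|V(H)|}{2}$ forces $|V(H)|\geq 2d\geq d+1$, so $H$ admits an injective placement whose points affinely span $X$; any isometry fixing such a configuration is the identity (isometries of a normed space are affine), so that placement has trivial stabiliser, giving $\dim\Isom(X)=\dim\T(r)=d$, and then $\dim\T(u)=d$ for every placement of every graph, which is exactly what your counting argument needs (the case $|V(H)|=1$ being trivial since then $G'=G$). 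With that repair your derivation of the equivalence via $f_d(G')=f_d(G)$ and Proposition \ref{p:rigid}(iii) is sound; note the paper itself compresses this entire discussion into the single observation that $f_d(G')=f_d(G)$.
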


\begin{proof}
Let $(G,p)$ and $(H,r)$ be independent in $X$. Denote by $\partial V(H)$ all the edges in $G'$ with exactly one vertex in $V(H)$. Let $b_1, \ldots, b_d$ be a basis for $X$. Consider the $|E(G')| \times d|V(G')|$ matrix $R$ with non-zero row entries as described below,
\begin{align*}
\bbordermatrix{  &   &   &   & \scriptstyle{(v,i)} &   &   &   & \scriptstyle{(w,i)} &   &   &   \cr	
\scriptstyle{vw \in E(G) \cap E(G')}           & & \ldots &  & \varphi_{p_v-p_w}(b_i)        &  & \ldots  &  & -\varphi_{p_v-p_w}(b_i)         &  & \ldots &  \cr
\scriptstyle{vw \in E(H)}                      &  & \ldots &  & \varphi_{r_v-r_w}(b_i)      &  & \ldots  &  & -\varphi_{r_v-r_w}(b_i)         &  & \ldots &  \cr
\scriptstyle{vw \in \partial V(H), ~ v \in V(H)} &  & \ldots &  & \varphi_{p_{v_0}-p_w}(b_i)  &  & \ldots  &  & -\varphi_{p_{v_0}-p_w}(b_i)  &  & \ldots &  \cr}
\end{align*}
Suppose $a \in \mathbb{R}^{E(G')}$ is a linear dependence on the rows of $R$. Define $b \in \mathbb{R}^{E(G)}$ by setting $b_{vw} = a_{vw}$ if $vw \in E(G) \cap E(G')$ and $b_{v_0 w} = a_{vw}$ if the edge $v_0 w \in E(G)$ is replaced by $vw \in \partial V(H)$. 
If $v\notin N_G(v_0)\cup\{v_0\}$ then note that,
\[\sum_{w \in N_G(v)} b_{vw} \varphi_{p_{v}-p_w} = 
\sum_{w \in N_{G'}(v)} a_{vw} \varphi_{p_{v}-p_w} =0.\]
If $v\in N_G(v_0)$ and $vv_0\in E(G)$ is replaced by $vz \in \partial V(H)$ then note that,
\[\sum_{w \in N_G(v)} b_{vw} \varphi_{p_{v}-p_w} = 
a_{vz} \varphi_{p_{v}-p_{v_0}}+\sum_{w \in N_{G'}(v)\backslash \{z\}} a_{vw} \varphi_{p_{v}-p_w} =0.\]
Since,
\[\sum_{v \in V(H)}\sum_{w \in N_H(v)} a_{vw} \varphi_{r_v-r_w}
= \sum_{vw \in E(H)} a_{vw} (\varphi_{r_v-r_w} + \varphi_{r_w-r_v}) 
= 0,\]
we have, 
\begin{eqnarray*}
\sum_{w \in N_G(v_0)} b_{v_0 w} \varphi_{p_{v_0}-p_w} 
&=& \sum_{v \in V(H)}  \sum_{w \in N_{G'}(v) \setminus V(H)} a_{vw} \varphi_{p_{v_0}-p_w}\\
&=& \sum_{v \in V(H)} \left(\sum_{w \in N_H(v)} a_{vw} \varphi_{r_v-r_w}  + \sum_{w \in N_{G'}(v) \setminus V(H)} a_{vw} \varphi_{p_{v_0}-p_w}\right)\\
&=& 0.
\end{eqnarray*}
Thus, if $b\not=0$ then $b$ is a linear dependence on the rows of $R(G,p)$. Since $R(G,p)$ is independent, it follows that $b=0$. In particular, $a_{vw}=0$ for all $vw\in E(G')\backslash E(H)$.
Note that if $a_H=(a_{vw})_{vw \in E(H)}$ is non-zero then $a_H$ is a linear dependence on the rows of $R(H,r)$. Since $R(H,r)$ is independent, we conclude that $a_H=0$ and so $a=0$. Thus we have shown that $R$ is independent.

Let $\epsilon>0$ and let $R_\epsilon$ denote the independent matrix obtained by multiplying the entries of the $E(H)$ rows of $R$ by $\epsilon$. 
Define a placement $p'$ of $G'$ in $X$ by setting $p'_v=p_v$ for all $v\in V(G')\backslash V(H)$ and $p'_{v} =p_{v_0}+ \epsilon r_v$ for all $v\in V(H)$. 
Note that for each edge $v_0w\in E(G)$, $p_{v_0}-p_w$ is a smooth point of $X$. Thus, using Lemma \ref{l:smu}, it follows that for $\epsilon$ sufficiently small, the rigidity matrix $R(G',p')$ will lie in an open neighbourhood of $R_\epsilon$ consisting of independent matrices. We conclude that $G'$ is independent in $X$.

If $H$ is also rigid and $\dim \mathcal{T}(r)=d$ for any choice of placement $r$ of $H$, then we note that $f_d(G') = f_d(G)$, thus $G'$ is rigid if and only if $G$ is rigid. 
\end{proof}

\begin{remark}
It can be shown that Proposition \ref{p:sub} holds for any normed space. Since the proof is significantly more technical we refer the reader to \cite{dewarthesis} for details.
\end{remark}

\section{Degree-bounded graphs}

Recall that Conjecture \ref{con:d-dim} proposed a characterisation of independence in $\ell_q^d$. We will prove the conjecture for a certain family of degree bounded graphs. This is analogous to a theorem of Jackson and Jord\'{a}n \cite{JJbounded} who worked in the Euclidean space $\ell_2^d$. 

Let $G=(V,E)$. For $U\subset V$, let $G[U]$ denote the subgraph of $G$ induced by $U$ and let $i_G(U)$, or simply $i(U)$ when the context is clear, denote the number of edges in $G[U]$. We also use $d(U,W)$ to denote the number of edges of the form $xy$ with $x\in U\setminus W$ and $y\in W\setminus U$, where $U,W\subset V$.
Let $\delta(G)$ denote the minimum degree in the graph $G$ and $\Delta(G)$ denote the maximum degree in $G$. Let $d_G(v)$, or simply $d(v)$, denote the degree of a vertex $v$ in $G$.

\begin{theorem}\label{thm:main}
Let $q\in (1,\infty)$, $q\not=2$ and let $d\geq 3$.
Suppose $G$ is a connected graph with $\delta(G)\leq d+1$ and $\Delta(G)\leq d+2$ for any $d\geq 3$.
Then $G$ is independent in $\ell_q^d$ if and only if $G$ is $(d,d)$-sparse.
\end{theorem}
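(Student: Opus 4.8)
The plan is to establish the nontrivial implication, namely that every $(d,d)$-sparse graph obeying the degree bounds is independent in $\ell_q^d$, the converse already being known \cite{kit-pow}. I would induct on $|V(G)|$, proving the slightly more robust assertion that every $(d,d)$-sparse graph each of whose connected components $C$ satisfies $\delta(C)\le d+1$ and $\Delta(C)\le d+2$ is independent in $\ell_q^d$; since the rigidity matrix of a disjoint union of well-positioned frameworks is block diagonal, a disjoint union of independent graphs is independent, so it suffices to treat connected $G$, and the base case $|V(G)|\le 2d$ is covered by Corollary \ref{c:K2d}(i). For the inductive step I split according to whether $\delta(G)\le d$ or $\delta(G)=d+1$ (the only remaining possibility, by hypothesis).

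If $G$ has a vertex $v$ of degree at most $d$, I remove it. The graph $G-v$ is $(d,d)$-sparse and, since $G$ is connected, each component $C$ of $G-v$ contains a neighbour of $v$, which has degree at most $d+1$ in $C$; so $C$ again satisfies the hypotheses, and by induction $G-v$ is independent. Then $G$ is recovered from $G-v$ by adjoining a vertex of degree at most $d$, so $G$ is independent by Proposition \ref{p:0ext} (or the evident variant of its proof when the degree is less than $d$). If instead $\delta(G)=d+1$, I fix a vertex $v$ of degree $d+1$; the aim is an \emph{admissible} $1$-reduction, that is, a non-adjacent pair $v_1,v_2\in N_G(v)$ for which $G':=(G-v)+v_1v_2$ is $(d,d)$-sparse, since then $G$ arises from $G'$ by a $d$-dimensional $1$-extension and Proposition \ref{p:1ext}, applied together with the inductive hypothesis for $G'$, finishes the step. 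To produce such a pair I would analyse the maximal $(d,d)$-tight subgraphs of $G-v$: submodularity of $f_d$ (namely $f_d(H_1\cup H_2)+f_d(H_1\cap H_2)\le f_d(H_1)+f_d(H_2)$ whenever $V(H_1)\cap V(H_2)\ne\varnothing$) forces these to be pairwise vertex-disjoint, and a pair $\{v_i,v_j\}\subseteq N_G(v)$ fails to be admissible exactly when $v_iv_j\in E(G)$ or $v_i,v_j$ lie in a common such subgraph. Partitioning $N_G(v)$ by which tight subgraph (if any) contains each vertex, and comparing the number of cross-part pairs with the number of edges that $(d,d)$-sparsity of $G$ allows between those parts — estimated via $f_d$ of the subgraph induced by $v$ together with the relevant tight subgraphs — shows that an admissible pair exists unless $G[N_G(v)]$ is complete, or complete minus a single edge.

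That exceptional near-complete case, together with the task of confirming that the reduced graph $G'$ still satisfies the degree hypotheses (the real danger being that inserting $v_1v_2$ turns some component $(d+2)$-regular), is the technical core, and the step I expect to be the main obstacle. Here the maximum-degree bound should be decisive: each vertex of the near-clique $A:=N_G(v)\cup\{v\}$ has at most one neighbour outside $A$, so at most $d+1$ edges of $G$ leave $A$, and feeding this into the $(d,d)$-sparsity inequality for the subgraph induced by $A$ and the remainder of $G$ should force one of three outcomes: $|V(G)|\le 2d$, in which case we are done by Corollary \ref{c:K2d}(i); or $G-A$ has a vertex of degree at most $d+1$ at which a reduction of the previous type is available; or $G$ decomposes as the independent core $K_{d+2}$ joined to the independent graph $G-A$ along at most $d$ edges, a configuration seen to be independent directly by placing the two pieces in generic relative position. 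The same localisation also lets the admissible pair, when one exists, be chosen so that no component of $G'$ becomes regular, so that the inductive hypothesis applies. All the other ingredients — Propositions \ref{p:0ext} and \ref{p:1ext}, the submodularity of $f_d$, and the base case from Corollary \ref{c:K2d} — are routine or already established.
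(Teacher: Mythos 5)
Your skeleton matches the paper's proof (induction on $|V|$, 0-extensions at vertices of degree at most $d$, admissible 1-reductions found by analysing critical/tight sets, and a separate treatment of the near-clique exception), but the two places you defer as ``the technical core'' are precisely where the argument has genuine gaps, and they do not close the way you suggest. First, in the exceptional case $G[\{v\}\cup N(v)]=K_{d+2}$ the number of edges joining $A=\{v\}\cup N(v)$ to $G-A$ can be $d+1$ (one per vertex of $N(v)$), not ``at most $d$'' as in the third branch of your trichotomy; and if your counting really only excludes pairs when $G[N(v)]$ is complete \emph{or complete minus an edge}, the latter case allows up to $d+3$ edges leaving $A$. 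Gluing two independent pieces ``in generic relative position'' only has the $d$-dimensional space of relative translations to work with, so it handles at most $d$ connecting bars; with $d+1$ or more you must exploit the nontrivial flexes of $K_{d+2}$ (which is independent but \emph{not} rigid in $\ell_q^d$ for $d\geq 3$, since $|E(K_{d+2})|<d(d+2)-d$). The paper does this cleanly: it proves (Lemma \ref{lem:7}) that an admissible 1-reduction exists whenever $G[\{v\}\cup N(v)]\neq K_{d+2}$ — using the degree bound $\Delta\leq d+2$, the fact that critical sets have internal minimum degree $d$ (Lemma \ref{lem:5}), maximality, and $d\geq 3$ — so ``complete minus an edge'' never survives as an exception; and in the genuine exception it observes that $G$ is a subgraph of a vertex-to-$H$ substitution ($H=G-K_{d+2}$) applied to $K_{d+3}$, which is independent by Corollary \ref{c:K2d}(i) since $d+3\leq 2d$, and invokes Proposition \ref{p:sub}. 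Your proposal never uses the substitution move, and the middle branch of your trichotomy (``$G-A$ has a low-degree vertex at which a reduction of the previous type is available'') does not obviously yield independence of $G$: a reduction inside $G-A$ only re-proves what induction already gives and does not control the connecting bars, while a reduction in $G$ at such a vertex may again run into a $K_{d+2}$ neighbourhood.

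Second, the claim that the admissible pair ``can be chosen so that no component of $G'$ becomes regular'' is asserted, not proved, and it is exactly the danger you identify: the component of $G'$ containing the new edge $v_1v_2$ could consist entirely of vertices of degree $d+2$ (this is compatible with $(d,d)$-sparsity once $|C|\geq 2d/(d-2)$), and then your strengthened inductive hypothesis does not apply. The paper avoids this by first disposing of the case where $G-v$ is disconnected — each $G[V_i+v]$ is independent by induction plus Proposition \ref{p:0ext}, and the pieces are reassembled at the cut vertex $v$ via Proposition \ref{p:sub} — so that when the 1-reduction is performed $G'$ is connected, and one of the $d-1\geq 2$ neighbours of $v$ not incident to the added edge has lost an edge and hence has degree at most $d+1$ in $G'$. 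Without some such device (or an actual proof of your selection claim), the inductive step is incomplete.
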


To prove the theorem we will need several additional lemmas.
The first of these is easily proved by counting the contribution to both sides.

\begin{lemma}\label{lem:4}
Let $G=(V,E)$. For any $U,W\subset V$ we have $i(U)+i(W)+d(U,W)=i(U\cup W)+i(U\cap W)$.
\end{lemma}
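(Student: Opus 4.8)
The plan is to prove this by a double-counting argument, showing that every edge of $G$ with both endpoints in $U\cup W$ contributes the same total to the left-hand side as to the right-hand side. First I would fix an arbitrary edge $xy\in E$ with $x,y\in U\cup W$, since edges with an endpoint outside $U\cup W$ contribute $0$ to both sides (the term $d(U,W)$ only counts edges between $U\setminus W$ and $W\setminus U$, both of which lie inside $U\cup W$, and $i(\cdot)$ only counts edges inside the relevant vertex set). Then I would partition the relevant edges according to where their endpoints lie relative to the sets $U\setminus W$, $W\setminus U$, and $U\cap W$, which together partition $U\cup W$. Writing $A=U\setminus W$, $B=W\setminus U$, $C=U\cap W$, there are six cases for the (unordered) pair of parts containing $x$ and $y$: both in $A$; both in $B$; both in $C$; one in $A$ and one in $C$; one in $B$ and one in $C$; one in $A$ and one in $B$.

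For each case I would simply tabulate the contribution to each of the four quantities $i(U)$, $i(W)$, $d(U,W)$, $i(U\cap W)$ and check that the two sides agree. For instance, an edge inside $C$ contributes $1$ to each of $i(U)$, $i(W)$, $i(U\cap W)$ and $0$ to $d(U,W)$, giving $1+1+0=1+1$ on both sides. An edge inside $A$ contributes $1$ only to $i(U)$, giving $1+0+0=1+0$. An edge between $A$ and $C$ contributes $1$ to $i(U)$ (both endpoints in $U$), $0$ to $i(W)$, $0$ to $d(U,W)$ (the endpoint in $C$ is not in $W\setminus U$), and $0$ to $i(U\cap W)$, giving $1+0+0=1+0$; note it lies in $U\cup W$ but not in $U\cap W$, so the right side is also $1$. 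The only case exercising the $d(U,W)$ term is an edge between $A$ and $B$: it contributes $0$ to $i(U)$, $0$ to $i(W)$, $1$ to $d(U,W)$, and $0$ to $i(U\cap W)$, while on the right side it lies in $U\cup W$ but not $U\cap W$, contributing $1+0$; so $0+0+1=1+0$. The remaining two cases are symmetric to ones already done.

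I do not anticipate any genuine obstacle here; the lemma is a purely combinatorial identity and the ``hard part'' is merely being careful that the six cases are exhaustive and that the bookkeeping in each is correct. One minor point worth stating cleanly is that $A$, $B$, $C$ really do partition $U\cup W$ and that no edge counted on either side can have an endpoint outside $U\cup W$, which justifies restricting attention to these six cases at the outset.
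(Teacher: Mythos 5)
Your proof is correct and follows exactly the approach the paper intends: the paper simply remarks that Lemma \ref{lem:4} ``is easily proved by counting the contribution to both sides,'' which is precisely your edge-by-edge double count over the partition $U\setminus W$, $W\setminus U$, $U\cap W$. The case analysis and bookkeeping you give are complete and accurate, so nothing further is needed.
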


We will say that $U\subset V$ is \emph{critical} if $|U|>1$ and $i(U)=d|U|-d$. 

\begin{lemma}\label{lem:5}
Let $G=(V,E)$ be $(d,d)$-sparse and suppose $U\subset V$ is critical. Then $d_{G[U]}(v)\geq d$ for all $v\in U$.
\end{lemma}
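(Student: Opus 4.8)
The plan is a short counting argument by contradiction against the $(d,d)$-sparsity of $G$. Suppose, for contradiction, that some $v\in U$ has $d_{G[U]}(v)\le d-1$, and set $U'=U\setminus\{v\}$.

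First I would record the elementary edge count: deleting $v$ from the induced subgraph $G[U]$ removes precisely the $d_{G[U]}(v)$ edges at $v$, so $i(U')=i(U)-d_{G[U]}(v)$. Combining this with criticality, $i(U)=d|U|-d$, and the assumed bound on the degree gives
\[
i(U') \;=\; i(U)-d_{G[U]}(v) \;\ge\; (d|U|-d)-(d-1) \;=\; d|U'|-d+1 .
\]
Next I would apply $(d,d)$-sparsity to the subgraph $G[U']$: when $|U'|\ge 2$ this forces $i(U')\le d|U'|-d$, contradicting the displayed inequality. The degenerate case $|U'|\le 1$ is dealt with separately — $|U|\le 1$ is excluded by the definition of critical, while if $|U|=2$ then criticality reads $i(U)=d$, which is incompatible with $i(U)\le 1$ unless $d=1$, and for $d=1$ one has $d_{G[U]}(v)=1=d$, again contradicting $d_{G[U]}(v)\le d-1$. (In the regime $d\ge 2$ that is actually used, no critical set has only two vertices, so this case is vacuous.)

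I do not anticipate any genuine obstacle here: the whole content is the one-line edge count followed by a single invocation of sparsity. The only point requiring minor care is the small-$|U|$ bookkeeping above, which is why I peel off the case $|U|=2$ explicitly rather than feeding a possibly-too-small subgraph into the sparsity inequality.
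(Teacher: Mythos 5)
Your argument is correct and is essentially the paper's own proof: delete the low-degree vertex, count $i(U\setminus\{v\})=i(U)-d_{G[U]}(v)>d|U\setminus\{v\}|-d$, and contradict $(d,d)$-sparsity. The extra small-$|U|$ bookkeeping is harmless but unnecessary, since the paper's sparsity condition $f_d(H)\geq d$ applies to every subgraph, including those on a single vertex.
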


\begin{proof}
Suppose $U$ is critical and there exists $x\in U$ with $d_{G[U]}(x)< d$. Then 
$$i(U-\{x\})=i(U)- d_{G[U]}(x)=d|U|-d- d_{G[U]}(x)=d|U-\{x\}|- d_{G[U]}(x)>d|U-\{x\}|-d,$$ contradicting the $(d,d)$-sparsity of $G$.
\end{proof}

Let $G=(V,E)$. A graph $G'$ is said to be obtained from $G$ by a \emph{($d$-dimensional) 1-reduction} at $v$ adding $x_1x_2$ if $V(G')=V-\{v\}$, for some vertex $v$ with $N_G(v)=\{x_1,x_2,\dots, x_{d+1}\}$, and $E(G')=E\setminus \{vx_1,vx_2,\dots,vx_{d+1} \}\cup \{x_1x_2\}$.

\begin{lemma}\label{lem:6}
Let $G=(V,E)$ be $(d,d)$-sparse, suppose $v\in V$ has $d(v)=d+1$ and $x,y\in N(v)$. Then the graph resulting from a 1-reduction at $v$ adding $xy$ is not $(d,d)$-sparse if and only if either $xy\in E$ or there exists a critical set $U$ with $x,y\in U\subset V-\{v\}$.
\end{lemma}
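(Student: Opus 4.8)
The plan is to prove both directions of the biconditional by analysing the edge count of $G'$ obtained from the $1$-reduction at $v$ adding $xy$, where $N(v)=\{x_1,\dots,x_{d+1}\}$ with $x=x_1$, $y=x_2$. Write $V'=V-\{v\}$ and $E'=(E\setminus\{vx_1,\dots,vx_{d+1}\})\cup\{xy\}$, so $|E'|=|E|-d$. Since $G$ is $(d,d)$-sparse, every subgraph $H$ of $G$ not containing $v$ satisfies $f_d(H)\geq d$; the only way $G'$ can fail to be $(d,d)$-sparse is if some subgraph $H'$ of $G'$ has $f_d(H')<d$, and any such $H'$ must contain the new edge $xy$ (otherwise $H'\subseteq G$). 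So the obstruction is always witnessed by a subgraph containing both endpoints $x,y$ and the edge $xy$.

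First I would handle the easy direction. If $xy\in E$, then adding $xy$ to $G'$ creates a multi-edge, so $G'$ is not a simple $(d,d)$-sparse graph (or, interpreting $1$-reduction in the multigraph sense, $G'$ is not $(d,d)$-sparse because the two-vertex subgraph $\{x,y\}$ already carries two edges). If instead there is a critical set $U$ with $x,y\in U\subseteq V-\{v\}$, consider $H'=G'[U]$. Then $i_{G'}(U)=i_G(U)+1=d|U|-d+1$ since $xy\notin E$ but $xy\in E'$ and $U$ misses $v$. Hence $f_d(G'[U])=d|U|-(d|U|-d+1)=d-1<d$, so $G'$ is not $(d,d)$-sparse.

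For the converse, suppose $G'$ is not $(d,d)$-sparse and $xy\notin E$. Pick a subgraph $H'$ of $G'$ with $f_d(H')<d$; as noted, $H'$ contains $xy$, hence both $x$ and $y$, and (since removing edges only increases $f_d$) we may take $H'=G'[U]$ for $U=V(H')\subseteq V'$, with $|U|\geq 2$. If $|U|=2$ then $U=\{x,y\}$ carries the single edge $xy$ with $f_d=2d-1\geq d$ (as $d\geq 1$), no contradiction — so this case forces $xy$ to have been a pre-existing edge, i.e.\ the multi-edge situation; since we assumed $xy\notin E$, we have $|U|\geq 3$ actually unnecessary, but in any case $i_{G'}(U)=i_G(U)+1$ because $U\subseteq V-\{v\}$. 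Then $d|U|-i_G(U)-1=f_d(G'[U])<d$, giving $i_G(U)> d|U|-d-1$, i.e.\ $i_G(U)\geq d|U|-d$. Combined with $(d,d)$-sparsity of $G$, which forces $i_G(U)\leq d|U|-d$, we get $i_G(U)=d|U|-d$, so $U$ is critical (note $|U|\geq 2>1$), and $x,y\in U\subseteq V-\{v\}$, as required.

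The only genuine subtlety is the two-vertex/$xy\in E$ edge case: one must be careful whether $1$-reduction is allowed to create a parallel edge and, if so, how $(d,d)$-sparsity is read for multigraphs. I would resolve this by the convention already implicit in the surrounding text — that $1$-reductions producing a multi-edge are simply declared non-sparse — which makes the ``$xy\in E$'' clause of the lemma correct by fiat and lets the counting argument above cover all remaining cases cleanly. Everything else is the routine double-counting encapsulated in the identity $i_{G'}(U)=i_G(U)+[xy\in E'\setminus E]$ together with Lemma~\ref{lem:5}'s guarantee that $v\notin U$ is automatic once $d(v)=d+1$ and $U$ is critical (by Lemma~\ref{lem:5}, every vertex of a critical set has degree $\geq d$ inside it, which is consistent with $v$ having only $d+1$ neighbours total, so no extra work is needed to exclude $v$ — it is excluded by construction since $U\subseteq V'$).
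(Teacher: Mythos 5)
Your proof is correct and follows essentially the same route as the paper's: the easy direction by direct counting, and the converse by observing that any violating subgraph of $G'$ must contain the new edge $xy$, whence its vertex set (which avoids $v$ by construction) is critical in $G$. The digressions about the $|U|=2$ case and Lemma \ref{lem:5} are unnecessary but harmless, since $v\notin U$ holds by construction and the counting argument already forces $U$ to be critical.
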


\begin{proof}
If $xy\in E$ or there exists a critical set $U$ with $x,y\in U\subset V-\{v\}$ then it is obvious that the 1-reduction at $v$ adding $xy$ does not result in a $(d,d)$-sparse graph. Conversely if a 1-reduction at $v$ adding $xy$ does not result in a $(d,d)$-sparse graph then either there is a pair of parallel edges between $x$ and $y$ in the resulting graph giving $xy\in E$ or there is a violation of $(d,d)$-sparsity. In the latter case let $G'$ be the graph resulting from the specified 1-reduction. Then there is a subgraph of $H_1=(V_1,E_1)$ of $G'$ with $i(V_1)=d|V_1|-(d-1)$. Clearly $x,y\in V_1$, otherwise $H_1$ is a subgraph of $G$ contradicting the $(d,d)$-sparsity of $G$. Hence $V_1$, as a subset of $V$, is the required critical set in $G$.
\end{proof}

The key technical lemma we will need is the following.

\begin{lemma}\label{lem:7}
Let $d\geq 3$ and suppose $G=(V,E)$ is $(d,d)$-sparse. Suppose $v\in V$ has $d(v)=d+1$ and $d(x)\leq d+2$ for all $x\in N(v)$. Then there is a 1-reduction at $v$ which results in a $(d,d)$-sparse graph unless $G[\{v\}\cup N(v)]=K_{d+2}$.
\end{lemma}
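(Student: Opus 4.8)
The statement is one-directional, so the plan is to prove its contrapositive by contradiction: assume that $G[\{v\}\cup N(v)]\neq K_{d+2}$ and that \emph{no} $1$-reduction at $v$ produces a $(d,d)$-sparse graph, and derive a contradiction. Since $v$ is joined to every vertex of $N(v)$, the first assumption gives a non-edge $xy$ with $x,y\in N(v)$, and Lemma~\ref{lem:6} (applied to the $1$-reduction at $v$ adding $xy$, which is not $(d,d)$-sparse although $xy\notin E$) yields a critical set containing $x$ and $y$ and avoiding $v$. So the family of critical sets $C$ with $v\notin C$ and $|N(v)\cap C|\geq 2$ is non-empty; I fix one, $C^{*}$, maximising $t:=|N(v)\cap C^{*}|$, and write $A=N(v)\cap C^{*}$ and $B=N(v)\setminus C^{*}$, so $|A|=t\geq 2$ and $|B|=d+1-t$.

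First I would rule out $t=d+1$: then $N(v)\subseteq C^{*}$, so $i(C^{*}\cup\{v\})=i(C^{*})+(d+1)=d|C^{*}|+1$ and $f_d(C^{*}\cup\{v\})=d-1<d$, violating $(d,d)$-sparsity; hence $t\leq d$ and $B\neq\emptyset$. Next I would show every $A$--$B$ pair is an edge. If $x_a\in A$, $x_b\in B$ and $x_ax_b\notin E$, then by Lemma~\ref{lem:6} there is a critical set $W$ with $x_a,x_b\in W$, $v\notin W$. If $|C^{*}\cap W|\geq 2$, substituting $i(C^{*})=d|C^{*}|-d$, $i(W)=d|W|-d$ and the sparsity bounds $i(C^{*}\cap W)\leq d|C^{*}\cap W|-d$, $i(C^{*}\cup W)\leq d|C^{*}\cup W|-d$ into Lemma~\ref{lem:4} forces equality throughout, so $C^{*}\cup W$ is critical; but $N(v)\cap(C^{*}\cup W)\supseteq A\cup\{x_b\}$ has size $t+1$, contradicting maximality of $t$. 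Hence $C^{*}\cap W=\{x_a\}$, so the neighbours of $x_a$ inside $C^{*}$ and inside $W$ are disjoint; Lemma~\ref{lem:5} supplies $\geq d$ of each, and $v$ is a further neighbour in neither, whence $d(x_a)\geq 2d+1>d+2$, contradicting $d(x_a)\leq d+2$.

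Now every vertex $x_a\in A$ has at least $d$ neighbours in $C^{*}$, the neighbour $v\notin C^{*}$, and a neighbour at each vertex of $B\subseteq N(v)\setminus C^{*}$, all in pairwise disjoint sets, so $d(x_a)\geq d+1+|B|=2d+2-t$; with $d(x_a)\leq d+2$ this forces $t\geq d$, hence $t=d$ and $|B|=1$, say $B=\{x_{d+1}\}$, and $x_{d+1}$ is adjacent to all of $A$ by the previous step. Finally I would examine $H:=G[C^{*}\cup\{v,x_{d+1}\}]$, on $|C^{*}|+2$ vertices: its edge set contains the $i(C^{*})=d|C^{*}|-d$ edges inside $C^{*}$, the $d+1$ edges from $v$ to $N(v)\subseteq V(H)$, and the $d$ edges from $x_{d+1}$ to $A\subseteq C^{*}$, three pairwise disjoint families, so $|E(H)|\geq d|C^{*}|+d+1$ and $f_d(H)\leq d(|C^{*}|+2)-(d|C^{*}|+d+1)=d-1<d$, contradicting $(d,d)$-sparsity.

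The main subtlety I anticipate is the choice of extremal object: it is maximality of $|N(v)\cap C^{*}|$, not of $C^{*}$ as a set, that makes the union step $C^{*}\cup W$ productive, and the bound $\Delta(G)\leq d+2$ has to be spent in the right order — first to eliminate $|C^{*}\cap W|=1$, then to force $A$--$B$ completeness, then to pin down $t=d$. Once $t=d$ is established the concluding count on $C^{*}\cup\{v,x_{d+1}\}$ is routine.
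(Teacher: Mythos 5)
Your proposal is correct and takes essentially the same route as the paper's proof: Lemma \ref{lem:6} converts the assumed failure of every 1-reduction into critical sets avoiding $v$, an extremal critical set combined with Lemma \ref{lem:4} forces all edges between $N(v)\cap C^*$ and $N(v)\setminus C^*$, and Lemma \ref{lem:5} together with the bound $d(x)\leq d+2$ on $N(v)$ produces the contradiction. The differences are cosmetic: you maximise $|N(v)\cap C^*|$ rather than taking a maximal critical set containing a fixed non-adjacent pair, and you close with a direct sparsity count on $G[C^*\cup\{v,x_{d+1}\}]$ instead of contradicting maximality; also your case split on $|C^*\cap W|$ is unnecessary, since the union argument already works when the intersection is a single vertex.
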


\begin{proof}
Suppose $G[\{v\}\cup N(v)]\neq K_{d+2}$. Then without loss of generality we may suppose that $xy\notin E$ for some $x,y\in N(v)$. Hence Lemma \ref{lem:6} implies there is a critical set $U\subset V-v$ with $x,y\in U$. Choose $U$ to be the maximal critical set containing $x,y$ but not $v$. If $N(v)\subset U$ then $i(U\cup \{v\})> d|U\cup \{v\}|-d$, contradicting $(d,d)$-sparsity. So without loss of generality we may suppose $w\notin U$ for some $w\in N(v)\setminus \{x,y\}$.

Suppose there is a critical set $W$ with $y,w\in W\subset V-\{v\}$. Then, by the maximality of $U$, $U\cup W$ is not critical, so $i(U\cup W)\leq d|U\cup W|-(d+1)$. Since $G$ is $(d,d)$-sparse we also have $i(U\cap W)\leq d|U\cap W|-d$.
Now using Lemma \ref{lem:4} we get $d|U|+d|W|-2d+d(U,W)\leq d|U\cup W|+d|U\cap W|-2d-1$, a contradiction.

Hence Lemma \ref{lem:6} implies that $yw\in E$. The same argument applied to the pair $x,w$ implies that $xw\in E$. Since $d\geq 3$, there exists $z\in N(v)\setminus \{x,y,w\}$. If $z\notin U$ then we can repeat the same argument to the pair $y,z$ to find that $yz\in E$. However this would imply that $d+2\geq d(y)\geq d_{G[U]}(y)+3$, which is a contradiction by Lemma \ref{lem:5}.
Hence for all $z\in N(v)\setminus \{x,y,w\}$ we have that $z\in U$. We may now apply the previous argument to each pair $z,w$ to see that each $zw\in E$. Hence $w$ has $d$ neighbours in $U$ so $U'=U\cup \{w\}$ is critical, contradicting the maximality of $U$.
\end{proof}

We can now prove the theorem.

\begin{proof}[Proof of Theorem \ref{thm:main}]
Necessity is easy. For the sufficiency we use induction on $|V|$. The base cases are $K_1$ and $K_{d+2}$. The latter of which is independent in $\ell_q^d$ by Corollary \ref{c:K2d}(i).

Suppose $G=(V,E)$ is $(d,d)$-sparse, $|V|\geq 2$, $G\neq K_{d+2}$ and $v\in V$ has minimum degree. Suppose first that $G-v$ is disconnected. Then each component $H_i=(V_i,E_i)$ of $G-v$ is connected with $\delta(H_i)\leq d+1$ and $\Delta(H_i)\leq d+2$. Hence $H_i$ is independent in $\ell_q^d$ by induction. Since $d_{H_i+v}(v)\leq d$, Proposition \ref{p:0ext} implies that $G[V_i+v]$ is independent in $\ell_q^d$. Hence $G$ is independent in $\ell_q^d$ by Proposition \ref{p:sub}.
Thus we may suppose that $G-v$ is connected.

Suppose $d(v)\leq d$. Then $G-v$ is connected with $\delta(G-\{v\})\leq d+1$ and $\Delta(G-\{v\})\leq d+2$. Hence $G-\{v\}$ is independent in $\ell_q^d$ by induction and $G$ is independent in $\ell_q^d$ by Proposition \ref{p:0ext}. Thus we may suppose that $d(v)=d+1$. Suppose $G[\{v\}\cup N(v)]\neq K_{d+2}$. Then Lemma \ref{lem:7} implies there is a 1-reduction at $v$ which results in a $(d,d)$-sparse graph $G'$. Since $G-\{v\}$ is connected, $G'$ is connected. Since $\delta(G)\leq d+1$ and $\Delta(G)\leq d+2$ we also have $\delta(G')\leq d+1$ and $\Delta(G')\leq d+2$. By induction $G'$ is independent in $\ell_q^d$ and hence $G$ is independent in $\ell_q^d$ by Proposition \ref{p:1ext}.

Hence $G[\{v\}\cup N(v)]= K_{d+2}$. Since $G\neq K_{d+2}$, there exists $u\in V\setminus V(K_{d+2})$. Consider $H=G-K_{d+2}$. Each component $H_i$ of $H$ is connected with $\delta(H_i)\leq d+1$ and $\Delta(H_i)\leq d+2$. Hence $H_i$ is independent in $\ell_q^d$ by induction, and trivially $H$ is independent in $\ell_q^d$. Note that for each vertex $r\in K_{d+2}$, there is at most one edge of the form $rs$ where $s\in H$. Thus $G$ is a subgraph of the graph formed from $K_{d+3}$ by a vertex-to-$H$ move on $t$ where $t$ is the vertex of $K_{d+3}$ not in the $K_{d+2}$. 
Also, since $d\geq 3$, $K_{d+3}$ is independent in $\ell_q^d$ by Corollary \ref{c:K2d}(i).
That $G$ is independent in $\ell_q^d$ now follows from Proposition \ref{p:sub}.
\end{proof}

We close this section by noting another independence result for normed spaces which we adapt from \cite{JJbounded}. This time we may use the combinatorics of \cite{JJbounded} directly.

\begin{theorem}\label{thm:ind2}
Let $X$ be a smooth and strictly convex normed space of dimension $3$ and let $G=(V,E)$ be a graph such that $i(U)\leq \frac{1}{2}(5|U|-7)$ for all $U\subset V$ with $|U|\geq 2$. Then $G$ is independent in $X$.
\end{theorem}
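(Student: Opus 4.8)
The plan is to transplant the inductive construction that Jackson and Jordán \cite{JJbounded} use for the Euclidean rigidity matroid $\mathcal{R}_3$ into a $3$-dimensional smooth and strictly convex normed space $X$, using the fact that every graph move appearing in that recursion has already been shown, in Section \ref{sec:ops}, to preserve independence for such $X$ (all of Propositions \ref{p:0ext}, \ref{p:1ext}, \ref{p:split} and \ref{p:sub} require exactly that $X$ be smooth and strictly convex). So I would argue by induction on $|V|$. The base case is $K_1$, which is vacuously independent; $K_2$ is independent since its single rigidity-matrix row is non-zero. For the inductive step, let $G$ satisfy $i_G(U)\le\tfrac12(5|U|-7)$ for all $U\subseteq V$ with $|U|\ge 2$. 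Then $2|E|=2i_G(V)\le 5|V|-7<5|V|$, so $G$ has a vertex $v$ with $d(v)\le 4$, and every vertex-deleted subgraph again satisfies the hypothesis.

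If $d(v)\le 3$ and $G-v$ is connected, then $G-v$ is independent in $X$ by induction, and $G$ is a subgraph of a $3$-dimensional $0$-extension of $G-v$ on some set $S\supseteq N(v)$ with $|S|=3$ (pad with dummy edges if $d(v)<3$), so $G$ is independent in $X$ by Proposition \ref{p:0ext}. If $G-v$ is disconnected one instead applies induction to each component, re-attaches $v$ to each piece by a $0$-extension, and recombines the pieces along $v$ by iterated vertex-to-$H$ substitution using Proposition \ref{p:sub}, exactly as in the proof of Theorem \ref{thm:main}. If $d(v)=4$, then — and this is the combinatorial heart of \cite{JJbounded}, which we import without change — either there is a pair $x,y\in N(v)$ with $xy\notin E$ such that the $1$-reduction at $v$ adding $xy$ (delete $v$ and its four incident edges, add the edge $xy$) again satisfies the $\tfrac12(5,7)$-sparsity condition, or $G[\{v\}\cup N(v)]$ lies on a short explicit list of small graphs. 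In the first case $G'$ is independent in $X$ by induction and $G$ is recovered from $G'$ by a $3$-dimensional $1$-extension, so $G$ is independent in $X$ by Proposition \ref{p:1ext}. In the second case $G$ carries a fixed small subgraph around $v$ and is obtained from a strictly smaller $\tfrac12(5,7)$-sparse graph by one of the four catalogued moves (a $0$- or $1$-extension, a vertex split, or a vertex-to-$H$ substitution), which preserve independence by Propositions \ref{p:0ext}--\ref{p:sub}; these finitely many configurations are checked by hand, each being built up from $K_1$ by those same moves together with Lemma \ref{l:qhom2}.

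The point I want to stress is that no new combinatorics enters: the counting arguments (the $\tfrac12(5,7)$-analogues of Lemmas \ref{lem:4}--\ref{lem:7}, in particular the existence of an admissible $1$-reduction at a degree-$4$ vertex outside the special configurations) are quoted verbatim from \cite{JJbounded}, and the only thing one must verify is that the recursion they produce uses \emph{only} inverse $0$-extensions, inverse $1$-extensions, inverse vertex splits and subgraph substitutions in their $3$-dimensional forms — never, say, a $2$-extension or a reduction that our Section \ref{sec:ops} does not cover — and that the finitely many irreducible/special graphs in their analysis are all independent in an arbitrary $3$-dimensional smooth strictly convex $X$. That bookkeeping, rather than any genuinely new idea, is where I expect the real work to lie.
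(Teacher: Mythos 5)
Your proposal follows essentially the same route as the paper: induct on $|V|$, import the combinatorial recursion from the proof of Jackson--Jord\'{a}n's Theorem 5.1 verbatim, and apply the independence-preserving operations of Section \ref{sec:ops}. The only real difference is that your hedging about disconnected graphs, special degree-$4$ configurations, vertex splits and substitutions is unnecessary: the cited proof already shows that every graph satisfying $i(U)\leq \frac{1}{2}(5|U|-7)$ admits a $0$-reduction or an admissible $1$-reduction to a smaller graph satisfying the same hypothesis, with no exceptional cases, so the paper's argument needs only the base case $K_2$ together with Propositions \ref{p:0ext} and \ref{p:1ext}.
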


\begin{proof}
We use induction on $|V|$. If $|V|=2$ then trivially $K_2$ is independent in $X$. If $|V|\geq 3$ then, in the proof of \cite[Theorem 5.1]{JJbounded}, it was shown that there must exist a 0-reduction or a 1-reduction on $G$ to a smaller graph satisfying the hypotheses of the theorem. Since this smaller graph is independent in $X$ by induction the proof is completed by application of Propositions \ref{p:0ext} and  \ref{p:1ext}.
\end{proof}

Note that neither Theorem \ref{thm:main} nor \ref{thm:ind2} are best possible. Indeed if Conjecture \ref{con:d-dim} is true then one can remove the degree hypotheses in Theorem  \ref{thm:main} and replace the sparsity assumption in Theorem  \ref{thm:ind2} by $(3,k)$-sparsity, where $k$ is the dimension of the isometry group of the normed space. On the other hand it seems to be a difficult problem to work with vertices of degree 5 so even extending Theorem \ref{thm:ind2} to include the case when $i(U)=\frac{1}{2}5|U|$ may be challenging.

\section{Surface graphs}
In this final section we consider the graphs of triangulated surfaces.
We will use our results to deduce first that every triangulation of the sphere is independent in $\ell_q^3$ and then that every triangulation of the projective plane is minimally rigid in $\ell_q^3$ for $1<q\neq 2<\infty$.
To this end we will use the following topological results providing recursive constructions of triangulations of the sphere and of the projective plane by vertex splitting due to Steinitz \cite{Ste} and Barnette \cite{bar}. In the statements we use \emph{topological vertex splitting} to mean a vertex splitting operation that preserves the surface, and we use $K_7-K_3$ to denote the unique graph obtained from $K_7$ by deleting the edges of a triangle.

\begin{proposition}[\cite{Ste}]\label{prop:sphere}
Every triangulation of the sphere can be obtained from $K_4$ by topological vertex splitting operations.
\end{proposition}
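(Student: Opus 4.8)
The plan is to prove the equivalent reduction statement: every triangulation $T$ of the sphere with $|V(T)|\geq 5$ arises from a strictly smaller triangulation of the sphere by a single topological vertex splitting. Since the only triangulation of the sphere on four vertices is $K_4$, iterating this downwards in the number of vertices (noting that a contraction always leaves at least four vertices) yields the proposition. For sphere triangulations a topological vertex split adds a new vertex $w_0$ adjacent to $v_0$ and to two of its neighbours $v_1,v_2$ (those forming the two new triangular faces on the new edge $w_0v_0$), and is exactly the inverse of contracting the edge $w_0v_0$. A standard fact, proved by applying the Jordan curve theorem to the link of $w_0v_0$, is that contracting an edge $uv$ of a triangulation of the sphere again gives a triangulation of the sphere precisely when $u$ and $v$ have exactly two common neighbours (necessarily the two apices of the triangular faces on $uv$); call such an edge \emph{contractible}. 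So the whole proposition reduces to showing that every triangulation of the sphere with at least five vertices has a contractible edge.

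I would find a contractible edge at a vertex $v$ of minimum degree. By Euler's formula a triangulation of the sphere has $3|V|-6$ edges, so the average degree is below $6$ and $v$ has degree $\delta\in\{3,4,5\}$; the link of $v$ is a cycle $C$ of length $\delta$ on the vertex set $N(v)$, bounding the disc $D$ (the side of $C$ not containing $v$) that carries the remainder of $T$. From the common-neighbour criterion, an edge $vx$ with $x\in N(v)$ is contractible if and only if $x$ is not joined in $T$ to either of the two vertices of $C$ that are non-consecutive to $x$: indeed the neighbours of $v$ are exactly $N(v)$, and the two vertices consecutive to $x$ in $C$ are already the apices of the faces on $vx$. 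Thus it suffices to find $x\in N(v)$ adjacent to none of its non-consecutive link vertices.

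The three cases are then short. If $\delta=3$ there are no non-consecutive pairs in $C$, so every edge at $v$ is contractible. If $\delta=4$, say $C=abcd$, the only forbidden adjacencies are the diagonals $ac$ and $bd$; both would have to be embedded in $D$, where they are crossing chords of $\partial D$, impossible for disjoint edges of a plane graph, so one of them is absent and the corresponding edge at $v$ is contractible. If $\delta=5$, say $C=abcde$, suppose no edge at $v$ is contractible; then each of the five vertices of $C$ is an endpoint of at least one of the five pentagon-diagonals that happens to be an edge of $T$, so those diagonals form an edge cover of a $5$-set and hence number at least $3$. But they are chords of $\partial D$ with pairwise disjoint interiors, i.e.\ pairwise non-crossing diagonals of a convex pentagon, of which there are at most $5-3=2$ — a contradiction. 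This settles all cases.

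The delicate point is the plane-topology bookkeeping rather than any single clever step: one must check that the relevant ``diagonals'' of the link cycle are genuinely forced to lie in the disc $D$ on the far side of $v$ (so that the crossing and packing bounds apply), that the common-neighbour count still detects contractibility in the presence of extra vertices and edges inside $D$, and that the contracted object is an honest simplicial triangulation of the sphere (no doubled edges, link of the new vertex a cycle) and not merely a plane graph. I would therefore isolate the contractibility criterion as a separate lemma with a short Jordan-curve proof, after which the degree-case analysis above goes through cleanly.
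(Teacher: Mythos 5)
Your proposal is correct, but note that the paper itself gives no proof of this statement: it is imported verbatim from Steinitz--Rademacher \cite{Ste}, so there is nothing internal to compare against. What you supply is the standard modern self-contained argument: reduce to showing that every sphere triangulation on at least five vertices has a contractible edge (one whose endpoints have exactly two common neighbours), find one at a vertex $v$ of minimum degree $\delta\in\{3,4,5\}$, and settle the three cases by planarity counts for chords of the link cycle inside the disc $D$ on the far side of $v$. Your case analysis is sound: for $\delta=3$ every spoke is contractible; for $\delta=4$ the two diagonals of the link $4$-cycle would be crossing chords of $D$, so one is absent; for $\delta=5$ covering all five link vertices needs at least three diagonals while a pentagon admits at most two pairwise non-crossing ones. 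The two points you flag are exactly the ones that must be spelled out in a full write-up, and both are standard: (a) contracting an edge of a sphere triangulation on at least five vertices whose endpoints have exactly two common neighbours yields a simplicial triangulation of the sphere, and the inverse operation is precisely a topological $3$-dimensional vertex split in the sense of Definition \ref{vertexsplitdef} (the new vertex shares exactly the two apex neighbours with $v_0$, the remaining neighbours being redistributed); and (b) any edge of $T$ joining two non-consecutive link vertices must be embedded in $D$, since the closed star of $v$ contains no edges besides spokes and link edges, so the crossing and packing bounds apply. Your route buys a self-contained combinatorial induction (and is close in spirit to Barnette's argument behind Proposition \ref{prop:projplane}); the citation buys the paper brevity, and Steinitz's original treatment, phrased through reductions of convex polyhedra, is genuinely different in flavour from the contractible-edge induction you give.
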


\begin{proposition}[\cite{bar}]\label{prop:projplane}
Every triangulation of the projective plane can be obtained from $K_6$ or $K_7-K_3$ by topological vertex splitting operations.
\end{proposition}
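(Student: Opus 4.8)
The plan is to argue by induction on the number of vertices, exploiting the fact that a topological vertex split is, up to relabelling, exactly the inverse of an edge contraction. Call an edge $uv$ of a triangulation \emph{contractible} if identifying $u$ with $v$ (and discarding the resulting doubled edge) again yields a simplicial triangulation of the same surface, and call a triangulation \emph{irreducible} if it has no contractible edge. First I would record the standard topological lemma that contracting a contractible edge of a triangulation of the projective plane produces a smaller triangulation of the projective plane, and that the reverse operation is precisely a $3$-dimensional topological vertex split in the sense of Definition \ref{vertexsplitdef} (the two faces meeting along $uv$ supply the two shared neighbours of the split vertex, matching $d-1=2$). Granting this lemma, the proposition reduces to identifying all irreducible triangulations: any triangulation is either irreducible or has a contractible edge, and in the latter case one contracts, applies the inductive hypothesis to the smaller triangulation, and recovers the original by a single topological vertex split. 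Thus it suffices to prove that the only irreducible triangulations of the projective plane are $K_6$ and $K_7-K_3$, which then serve as the base cases of the induction (exactly as $K_4$ does in the sphere case, Proposition \ref{prop:sphere}).

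The second step is to replace irreducibility by a purely combinatorial criterion. In a triangulation of a closed surface each edge lies on exactly two faces, with two distinct \emph{apex} vertices; the edge $uv$ fails to be contractible precisely when $u$ and $v$ have a common neighbour beyond these two apexes, equivalently when $uv$ lies on a \emph{non-facial} $3$-cycle $uvw$ (a triangle that does not bound a face). Hence a triangulation is irreducible if and only if every edge lies on some non-facial $3$-cycle. Alongside this I would record the consequences of Euler's formula for the projective plane (Euler characteristic $1$) together with the condition that every face is a triangle, namely $|E|=3|V|-3$ and $|F|=2|V|-2$; in particular the average degree equals $6-6/|V|<6$, so an irreducible triangulation always contains a vertex of degree at most $5$.

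The heart of the argument, and the step I expect to be the main obstacle, is to bound the vertex count of an irreducible triangulation and then enumerate the small cases. A non-facial $3$-cycle in the projective plane is either null-homotopic, in which case it bounds a triangulated disk containing at least one interior vertex, or non-contractible, in which case cutting along it leaves a triangulated disk; these two possibilities govern how the mandatory non-facial triangles can cover every edge. My plan is to fix a vertex $v$ of degree at most $5$ and run a case analysis on its degree, using irreducibility to force many chords among the neighbours of $v$ and using the edge count $|E|=3|V|-3$ to show that such configurations cannot coexist once $|V|$ grows; the topological dichotomy above is what prevents the null-homotopic triangles from escaping their bounding disks and thereby traps the count at $|V|\le 7$. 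A direct enumeration of the triangulations on $6$ and $7$ vertices then checks that the only ones in which every edge lies on a non-facial triangle are $K_6$ and $K_7-K_3$, completing the classification and hence the induction. The delicate part throughout is the interplay between the local chord conditions imposed by irreducibility and the global topology of the projective plane, and I would expect the degree-$5$ case to require the most careful bookkeeping.
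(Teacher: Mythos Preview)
The paper does not prove this proposition at all: it is stated with a citation to Barnette \cite{bar} and used as a black box in the proof of Theorem \ref{thm:projplane}. So there is no ``paper's own proof'' to compare your proposal against.

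That said, your outline is broadly the right shape for Barnette's argument: reduce to the inverse operation (edge contraction), characterise contractibility via non-facial $3$-cycles, and classify the irreducible triangulations. The part you flag as the main obstacle --- bounding $|V|$ for an irreducible triangulation and then enumerating --- is indeed where the real work lies, and your sketch of ``fix a vertex of degree at most $5$ and force chords among its neighbours'' is not yet a proof: you would need to actually carry out the case analysis and the topological cutting argument (null-homotopic versus non-contractible $3$-cycles) to reach $|V|\le 7$, and this is exactly what Barnette's paper does in detail. If the intent is to supply a self-contained proof rather than cite \cite{bar}, that case analysis must be written out; as it stands your proposal is a plan rather than a proof. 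For the purposes of the present paper, however, the citation suffices.
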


\begin{theorem}
Let $X$ be a smooth and strictly convex normed space of dimension $3$, and let $G$ be a triangulation of the sphere.
Then $G$ is independent  in $X$.
\end{theorem}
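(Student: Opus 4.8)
The plan is to run an induction along the recursive construction of sphere triangulations supplied by Proposition \ref{prop:sphere}: every triangulation $G$ of the sphere is obtained from $K_4$ by a finite sequence of topological vertex splitting operations, so it suffices to show that $K_4$ is independent in $X$ and that a single topological vertex split preserves independence in $X$.

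\emph{Base case.} I would first check that $K_4$ is independent in the three-dimensional smooth, strictly convex space $X$. This is small enough to do by hand: $K_4$ is obtained from the single edge $K_2$ by adjoining a third vertex joined to both endpoints and then performing a $3$-dimensional $0$-extension (a fourth vertex joined to the other three), so independence follows from the block-triangular structure of the rigidity matrix exhibited in the proof of Proposition \ref{p:0ext}. Alternatively, for a placement of the four vertices chosen via Lemma \ref{l:qhom2} so that the support functionals of three suitably chosen edge vectors are linearly independent, one checks directly that $R(K_4,p)$ has rank $6=|E(K_4)|$.

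\emph{Inductive step.} The key point is that a topological vertex splitting operation on a triangulation is precisely a $3$-dimensional vertex split in the sense of Definition \ref{vertexsplitdef}: splitting a vertex $v_0$ of the triangulation along two vertices $v_1,v_2$ of its link cycle introduces a new vertex $w_0$ adjacent to $v_0,v_1,v_2$, so that the two ``anchor'' edges $v_0v_1,v_0v_2$ play the role of the $d-1=2$ retained edges, and distributes the remaining edges at $v_0$ between $v_0$ and $w_0$ — exactly the data of Definition \ref{vertexsplitdef} with $d=3$. Hence Proposition \ref{p:split} applies verbatim, and if the triangulation before the split is independent in $X$ then so is the one afterwards. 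Since the statement only asserts independence and not rigidity, I need only the independence half of Proposition \ref{p:split} and do not have to control the tangent spaces $\T(p)$. Iterating along the Steinitz sequence then yields the theorem.

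I do not expect a genuine obstacle here, since the two substantive ingredients — the Steinitz construction and the vertex-splitting proposition — are already available. The only points requiring care are the base case $K_4$ and the bookkeeping that identifies ``topological vertex splitting'' with the graph-theoretic vertex split of dimension $3$; both are routine, with the base case being the most delicate only in that it uses the (easy) fact that adjoining a vertex of degree at most $d$ preserves independence, which is visible from the proof of Proposition \ref{p:0ext}.
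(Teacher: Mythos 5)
Your proposal is correct and follows essentially the same route as the paper: the paper likewise obtains $K_4$ via Proposition \ref{p:0ext}, identifies Steinitz's topological vertex splits with the $3$-dimensional vertex splits of Proposition \ref{p:split}, and concludes by induction along Proposition \ref{prop:sphere}. Your extra care about the base case (degree $\leq d$ additions) and about matching the topological split to Definition \ref{vertexsplitdef} matches what the paper leaves implicit.
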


\begin{proof}
Let $G$ be a triangulation of the sphere. Proposition \ref{prop:sphere} shows that $G$ can be generated from $K_4$ by vertex splitting operations.  
We may use Proposition \ref{p:0ext} to deduce that $K_4$ is indepedendent in $X$ and Proposition \ref{p:split} shows that vertex splitting preserves minimal rigidity in $X$. The theorem follows from these results by an elementary induction argument.
\end{proof}

To give an analogous result for the projective plane we will need to restrict to $\ell_q^3$ and make use of the following lemmas.

\begin{lemma}\label{l:powerinequality}
Let $x > y > 0$.
If $k > 1$ then $x^k - y^k > (x -y )^k$
and if $k<1$ then $x^k - y^k < (x -y )^k$.
\end{lemma}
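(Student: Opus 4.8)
This is an elementary real-variable inequality, and the quickest route is to prove the equivalent superadditivity/subadditivity statement for power functions and then specialise. Precisely, I would first establish the following: for all $a,b>0$, if $k>1$ then $a^k+b^k<(a+b)^k$, and if $k<1$ then $a^k+b^k>(a+b)^k$. The lemma then follows immediately by taking $a=x-y>0$ and $b=y>0$, so that $a+b=x$, giving $(x-y)^k+y^k<x^k$ when $k>1$ and $(x-y)^k+y^k>x^k$ when $k<1$, which rearrange to the two asserted inequalities.

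The plan for the auxiliary claim is a two-line computation. Write
\[(a+b)^k=(a+b)^{k-1}(a+b)=(a+b)^{k-1}a+(a+b)^{k-1}b.\]
Since $a+b>a>0$ and $a+b>b>0$, monotonicity of $t\mapsto t^{k-1}$ on $(0,\infty)$ does the rest: when $k>1$ the exponent $k-1$ is positive, so $(a+b)^{k-1}>a^{k-1}$ and $(a+b)^{k-1}>b^{k-1}$, whence $(a+b)^k>a^{k-1}a+b^{k-1}b=a^k+b^k$; when $k<1$ the exponent $k-1$ is negative, so the inequalities reverse and $(a+b)^k<a^k+b^k$.

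There is no real obstacle here; the only thing to be careful about is the direction of the monotonicity of $t\mapsto t^{k-1}$ in the two regimes $k>1$ and $k<1$, which is exactly what produces the two opposite conclusions. (An equivalent alternative, if a self-contained calculus argument is preferred, is to normalise by $x$, set $t=y/x\in(0,1)$, and analyse $f(t)=1-t^k-(1-t)^k$: one has $f(0)=f(1)=0$ and $f'(t)=k\bigl((1-t)^{k-1}-t^{k-1}\bigr)$, whose sign is governed by whether $t\lessgtr\tfrac12$ together with the sign of $k-1$, so $f$ has a single interior extremum at $t=\tfrac12$ and is therefore strictly positive on $(0,1)$ when $k>1$ and strictly negative when $k<1$.) I would present the superadditivity argument as the main proof since it is shortest.
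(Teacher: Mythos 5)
Your proof is correct, but it follows a genuinely different route from the paper. The paper fixes $y>0$ and runs a calculus argument: it differentiates $f(t)=t^k-y^k-(t-y)^k$ on $(y,\infty)$, notes $f'(t)=k\left(t^{k-1}-(t-y)^{k-1}\right)$ has a constant sign determined by whether $k>1$ or $k<1$, and concludes from $\lim_{t\to y}f(t)=0$ that $f$ is strictly positive (resp.\ negative) on $(y,\infty)$. You instead prove strict superadditivity/subadditivity of $a\mapsto a^k$ purely algebraically, via the splitting $(a+b)^k=(a+b)^{k-1}a+(a+b)^{k-1}b$ and the monotonicity of $t\mapsto t^{k-1}$, and then substitute $a=x-y$, $b=y$. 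Your argument is more elementary (no derivatives or limits) and in fact works uniformly for every $k\neq 1$, including $k\leq 0$, where the paper's limit step would need care since $(t-y)^k\to\infty$ as $t\to y^+$; this extra generality is not needed here, as the lemma is only applied with $k=q-1>0$. One small caveat: your parenthetical alternative with $f(t)=1-t^k-(1-t)^k$ tacitly requires $k>0$ for the boundary values $f(0)=f(1)=0$, but since you present it only as an aside and your main argument is complete, this does not affect the proof.
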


\begin{proof}
Fix $y \in (0, \infty)$ and define the smooth function
$f : (y , \infty) \rightarrow \mathbb{R}, ~ t \mapsto t^k - y^k - (t - y)^k$.
We note that 
$f'(t)= k t^{k-1} - k (t- y)^{k-1}.$
If $k > 1$ then $f'(t) >0$ and $f$ is strictly increasing, while if $k < 1$ then $f'(t) <0$ and $f$ is strictly decreasing.
As $\lim_{t \rightarrow y} f(t) =0$,
it follows that if $k > 1$ then $f(t) >0$, while if $k < 1$ then $f(t) <0$.
The result now follows by choosing $ x >y $ and rearranging $f(x)$.
\end{proof}

\begin{lemma}\label{l:specialk4ind}
Let $q \in (1,2) \cup (2,\infty)$, let $\gamma \in (0,1)$ and
let $p^\gamma$ be the placement of the complete graph $K_4$ on the vertex set $\{v_0 , v_1 , v_2, v_3 \}$ with,
\[p^\gamma_{v_0}=(0,0),\quad p^\gamma_{v_1} = (0,1),\quad p^\gamma_{v_2} = (-1,0),
\quad p^\gamma_{v_3} = (\gamma, \gamma).\]
Then $(K_4,p^\gamma)$ is independent in $\ell_q^2$.
\end{lemma}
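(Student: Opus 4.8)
The plan is to show that the altered rigidity matrix $\tilde{R}(K_4,p^\gamma)$, a $6\times 8$ matrix, has linearly independent rows; equivalently, that its cokernel is trivial, i.e.\ the only self-stress of $(K_4,p^\gamma)$ is the zero vector. Since $\ell_q^2$ is smooth for every $q\in(1,\infty)$ and the placement $p^\gamma$ has distinct endpoints on each edge, $p^\gamma$ is well-positioned and the matrix is defined. I would begin by recording the six edge vectors $(p^\gamma_{v_i}-p^\gamma_{v_j})^{(q-1)}$. Three of them, namely those for $v_0v_1$, $v_0v_2$ and $v_1v_2$, are axis-aligned or have entries in $\{0,\pm 1\}$, while the three edges incident to $v_3$ carry the factors $\gamma^{q-1}$, $(1-\gamma)^{q-1}$ and $(1+\gamma)^{q-1}$.

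Writing a self-stress as $\omega=(\omega_{ij})_{v_iv_j\in E}$, the cokernel condition is the equilibrium system $\sum_{j}\omega_{ij}\,[(p^\gamma_{v_i}-p^\gamma_{v_j})^{(q-1)}]_k=0$ at each vertex $v_i$ and each coordinate $k$. I would first exploit the three simple edges: the two equations at $v_0$ give $\omega_{02}=\gamma^{q-1}\omega_{03}$ and $\omega_{01}=-\gamma^{q-1}\omega_{03}$, the horizontal equation at $v_1$ gives $\omega_{12}=\gamma^{q-1}\omega_{13}$, and the vertical equation at $v_2$ then forces $\omega_{13}=-\omega_{23}$. These relations express every stress in terms of the two scalars $\omega_{03}$ and $\omega_{13}$. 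Substituting into the two remaining equations (the vertical one at $v_1$ and the horizontal one at $v_2$) and eliminating $\omega_{03}$ collapses the whole system to the single scalar condition $\bigl(2\gamma^{q-1}+(1-\gamma)^{q-1}-(1+\gamma)^{q-1}\bigr)\,\omega_{13}=0$; the two equations at $v_3$ are then redundant by Newton's third law.

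It therefore remains to prove that the coefficient $g:=2\gamma^{q-1}+(1-\gamma)^{q-1}-(1+\gamma)^{q-1}$ is nonzero for every $\gamma\in(0,1)$ and every $q\neq 2$, and this is the heart of the argument. I would apply Lemma \ref{l:powerinequality} with $x=1+\gamma$, $y=1-\gamma$ (so that $x>y>0$ and $x-y=2\gamma$) and exponent $k=q-1\neq 1$. When $k>1$ the lemma gives $(1+\gamma)^{k}-(1-\gamma)^{k}>(2\gamma)^{k}=2^{k}\gamma^{k}>2\gamma^{k}$, whence $g<0$; when $k<1$ the reversed inequality yields $(1+\gamma)^{k}-(1-\gamma)^{k}<2^{k}\gamma^{k}<2\gamma^{k}$, whence $g>0$. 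In either case $g\neq 0$, so $\omega_{13}=0$, and the relations above then force $\omega_{23}=\omega_{12}=0$ and $\omega_{03}=\omega_{02}=\omega_{01}=0$. Hence the self-stress is trivial and $(K_4,p^\gamma)$ is independent in $\ell_q^2$.

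The single genuine obstacle is the nonvanishing of $g$; everything else is bookkeeping with a sparse linear system. It is reassuring that the argument degenerates precisely at $q=2$ (that is, $k=1$), where $g=0$ and the power inequality becomes an equality, consistent with $K_4$ being dependent in the Euclidean plane $\ell_2^2$. The hypothesis $q\neq 2$ is thus essential and enters exactly through Lemma \ref{l:powerinequality}.
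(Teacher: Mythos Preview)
Your argument is correct and is essentially the same as the paper's proof, packaged differently: the paper deletes the two $v_0$-columns from $\tilde{R}(K_4,p^\gamma)$ and computes the resulting $6\times 6$ determinant directly as $(\gamma^{q-1})^2\bigl(2\gamma^{q-1}-(1+\gamma)^{q-1}+(1-\gamma)^{q-1}\bigr)$, whereas you reach the identical scalar factor $g$ by solving the equilibrium (cokernel) system vertex by vertex. Both then invoke Lemma~\ref{l:powerinequality} with $x=1+\gamma$, $y=1-\gamma$, $k=q-1$ in exactly the same way to conclude $g\neq 0$, so the two proofs coincide at the only nontrivial step.
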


\begin{proof}
Consider the $6 \times 6$-matrix
\begin{align*}
M_\gamma : = \left[\begin{array}{cccccc}
0 & 1 & 0 & 0 & 0 & 0 \\
0 & 0 & -1 & 0 & 0 & 0 \\
0 & 0 & 0 & 0 & \gamma^{q-1} & \gamma^{q-1} \\
1 & 1 & -1 & -1 & 0 & 0 \\
-\gamma^{q-1} & (1-\gamma)^{q-1} & 0 & 0 & \gamma^{q-1} & -(1-\gamma)^{q-1} \\
 0 & 0 & -(1+\gamma)^{q-1} & -\gamma^{q-1} & (1+\gamma)^{q-1} & \gamma^{q-1} \\
\end{array}\right]
\end{align*}
Note that $M_\gamma$ is the submatrix of the altered $\tilde{R}(K_4,p^\gamma)$ formed by removing the columns corresponding to $v_0$.
Thus, if $M_\gamma$ is invertible then $(K_4,p^\gamma)$ is independent.
We have,
\begin{align*}
\det M_\gamma = (\gamma^{q-1})^2 \left(2 \gamma^{q-1} - (1+\gamma)^{q-1} + (1- \gamma)^{q-1} \right).
\end{align*}
By Lemma \ref{l:powerinequality}, if $q-1 >1$ then,
\begin{align*}
(1+\gamma)^{q-1} - (1-\gamma)^{q-1} > 2^{q-1} \gamma^{q-1} > 2\gamma^{q-1},
\end{align*}
while if $q-1 <1$ then,
\begin{align*}
(1+\gamma)^{q-1} - (1-\gamma)^{q-1} < 2^{q-1} \gamma^{q-1} < 2\gamma^{q-1}.
\end{align*}
Thus $\det M_\gamma\not=0$ and so $M_\gamma$ is invertible, as required.
\end{proof}

\begin{lemma}\label{lem:k7-k3}
The graph $K_7 - K_3$ is minimally rigid in $\ell_q^3$ for any $q \in (1,\infty), q\neq 2$.
\end{lemma}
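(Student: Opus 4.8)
The plan is to reduce the statement to exhibiting a single independent placement. Since $K_7-K_3$ has $|V|=7$ and $|E|=\binom{7}{2}-3=18=3|V|-3$, and since $\dim\T(p)=3$ for every well-positioned placement in $\ell_q^3$ when $q\neq 2$, any independent placement $p$ of $K_7-K_3$ is automatically regular (its rank $18=|E|$ is maximal) and rigid ($18=3\cdot7-\dim\T(p)$), hence minimally rigid by Proposition~\ref{p:rigid}(iii). So it suffices to find one placement whose altered rigidity matrix has linearly independent rows. I should note first that none of the moves of Section~\ref{sec:ops}, nor coning, nor bracing, produces $K_7-K_3$ from a smaller graph, for degree and counting reasons: $0$- and $1$-extensions and vertex splits cannot create the degree pattern $(6,6,6,6,4,4,4)$ out of a six-vertex graph, a vertex-to-$H$ substitution fails the edge count, and coning or bracing cannot output a $(3,3)$-tight graph on seven vertices. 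So a direct rank computation is unavoidable.

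Fix the labelling $V=\{v_1,\dots,v_7\}$ with the deleted triangle on $\{v_5,v_6,v_7\}$, so $\{v_1,v_2,v_3,v_4\}$ induces a $K_4$ and each of $v_5,v_6,v_7$ is adjacent to exactly $v_1,v_2,v_3,v_4$. I would view $K_7-K_3$ as obtained from $H:=K_6-v_5v_6$, the graph on $\{v_1,\dots,v_6\}$ missing only that one edge, by adjoining $v_7$ together with the edges $v_7v_1,\dots,v_7v_4$. As $H\subseteq K_6=K_{2\cdot 3}$, the graph $H$ is independent in $\ell_q^3$ by Corollary~\ref{c:K2d}(i); so $\tilde R(H,p)$ has rank $14$ for generic $p$, and a four-dimensional flex space consisting of the three translations together with one genuine flex $x^\star$, namely the flex "dual" to the missing edge $v_5v_6$.

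The heart of the argument is a cokernel reduction. Let $\omega=(\omega_e)$ lie in the cokernel of $\tilde R(K_7-K_3,p')$, where $p'|_H=p$ and $p'_{v_7}$ is to be chosen. The three columns indexed by $v_7$ force $\sum_{i=1}^{4}\omega_{v_7v_i}(p'_{v_7}-p_{v_i})^{(q-1)}=0$; for $p'_{v_7}$ in general position any three of these four vectors are linearly independent, so $\omega_{v_7v_i}=t\,a_i$ for nonzero scalars $a_1,\dots,a_4$ realising the unique (up to scale) dependence. Feeding this back, $\omega|_{E(H)}$ must satisfy $\tilde R(H,p)^{\mathsf T}\,\omega|_{E(H)}=-t\,\eta$, where $\eta$ is supported on the $v_1,\dots,v_4$ blocks with $v_i$-block $a_i(p_{v_i}-p'_{v_7})^{(q-1)}$. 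Since $\tilde R(H,p)$ has independent rows, such a solution can exist only if $\eta$ lies in its row space, i.e. only if $\eta$ is orthogonal to every infinitesimal flex of $(H,p)$; the relation defining the $a_i$ makes $\eta$ automatically orthogonal to the translations, so the entire obstruction is whether $\langle\eta,x^\star\rangle\neq 0$. If it is nonzero then $t=0$, hence $\omega|_{E(H)}=0$ and $\omega=0$, proving independence.

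It remains to choose coordinates with $\sum_{i=1}^{4}a_i\langle(p_{v_i}-p'_{v_7})^{(q-1)},x^\star_{v_i}\rangle\neq 0$. For this I would place the $K_4$ on $\{v_1,\dots,v_4\}$ in a concrete, partly degenerate configuration in the spirit of Lemma~\ref{l:specialk4ind}, so that $x^\star$ and the scalars $a_i$ have closed forms, complete it to a placement of $H$, and choose $p'_{v_7}$ so that the displayed sum collapses to an expression built from $(q-1)$-th powers — of the type $2\gamma^{q-1}-(1+\gamma)^{q-1}+(1-\gamma)^{q-1}$ or a close variant — whose non-vanishing for $q\neq2$ is exactly Lemma~\ref{l:powerinequality}. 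An equivalent route is to put $v_1$ at the origin, delete its three columns, and show directly that the resulting $18\times18$ matrix is nonsingular, taking the remaining coordinates invariant under permuting $v_5,v_6,v_7$ so the determinant block-diagonalises over the $S_3$-isotypic components into small blocks again controlled by Lemma~\ref{l:powerinequality}. The main obstacle, in either approach, is the bookkeeping: keeping the placement simultaneously generic enough to be independent and structured enough that the final non-vanishing reduces to one strict power inequality — the computations in Lemmas~\ref{l:specialk4ind} and~\ref{l:powerinequality} being the template.
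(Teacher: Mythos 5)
Your structural reduction is sound: writing $K_7-K_3$ as $H\cup\{v_7\}$ with $H=K_6-v_5v_6$, using Corollary~\ref{c:K2d}(i) to get independence of $H$, and arguing via the cokernel that a row dependence forces $\omega_{v_7v_i}=ta_i$ and then $t\,\eta$ into the row space of $\tilde R(H,p)$, so that independence of $K_7-K_3$ follows from a single non-vanishing pairing $\langle\eta,x^\star\rangle\neq0$ with the one non-translational flex of $(H,p)$, is a correct (and arguably cleaner) reorganization than the paper's direct elimination; the deduction of minimal rigidity from independence plus $|E|=3|V|-3$ is also fine. The difficulty is that you stop exactly where the lemma's real content begins. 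You never exhibit the placement, never compute $x^\star$ or the coefficients $a_i$, and never verify $\langle\eta,x^\star\rangle\neq0$; the phrases ``I would place\dots'' and ``collapses to an expression \dots whose non-vanishing \dots is exactly Lemma~\ref{l:powerinequality}'' are a plan, not a proof. This cannot be waved through by genericity: the whole point is that for $q=2$ the count $18>3\cdot7-6$ makes the analogous pairing degenerate, so the non-vanishing genuinely depends on $q\neq2$ and on a concrete computation. You also leave unchecked that your ``partly degenerate'' configuration keeps $H=K_6-v_5v_6$ independent, and that the four vectors $(p'_{v_7}-p_{v_i})^{(q-1)}$ have a one-dimensional dependence space with the coefficients you need.

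Moreover, the expectation that the obstruction reduces to a single strict power inequality is optimistic. In the paper's computation the final determinant factors as
\[
\bigl((1-\gamma^{q-1})-(1-\gamma)^{q-1}\bigr)\bigl((1-\gamma)^{q-1}+(2\gamma)^{q-1}-(1+\gamma^{q-1})\bigr),
\]
and only the first factor is killed directly by Lemma~\ref{l:powerinequality}; the second requires a separate argument, choosing $\gamma\in(0,1)$ with $f(\gamma)\neq0$ for $f(x)=(2^{q-1}-1)x^{q-1}+(1-x)^{q-1}-1$, using $f(1)=2^{q-1}-2\neq0$ and continuity. So even granting your framework, the decisive step is a two-part non-degeneracy verification that your proposal does not carry out; until you produce an explicit placement and show the pairing (or an equivalent determinant) is non-zero for every $q\in(1,\infty)$, $q\neq2$, the lemma is not proved.
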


\begin{proof}
Let $G:=K_7 - K_3$ be the graph with vertex set $V := \{ v_0 , v_1 , v_2, v_3 , a , b , c \}$ and edge set $E := K(V) \setminus \{ ab, ac , bc \}$.
Choose $\gamma \in (0,1)$.
We now define a placement $p$ of $G$ in $\ell_q^3$ by putting
\[p_{v_0}=(0,0,0),\quad p_{v_1} = (0,1,0),\quad p_{v_2} = (-1,0,0),\quad p_{v_3} = (\gamma, \gamma,0),\]
\[p_a = (0,0,-1),\quad  p_b = (1,1,1),\quad p_c = (1,0,1).\]
Let $(K_4,r)$ be the bar-joint framework in $\ell_q^2$ with, 
\[r_{v_0}=(0,0),\quad r_{v_1} = (0,1), \quad r_{v_2} = (-1,0), \quad r_{v_3} = (\gamma, \gamma).\]
Then, by Lemma \ref{l:specialk4ind}, the altered rigidity matrix $\tilde{R}(K_4,r)$ is independent.
By shifting all $(v_i;1)$ and $(v_i;2)$ columns of $\tilde{R}(G,p)$ to the left, we obtain the matrix
\begin{align*}
\left[\begin{array}{cc}
\tilde{R}(K_4,r)&0_{6 \times 13} \\
* & M 
\end{array}\right],
\end{align*}
where for any $a \in \mathbb{R}$ we define $a_{n \times m}$ to be the $n \times m$ matrix with $a$ for each entry,
and $M$ is a $12 \times 13$ matrix.
To show $\tilde{R}(G,p)$ is independent it suffices to show $M$ has row independence.

By reordering rows and columns if needed,
we have that
\begin{align*}
M = \kbordermatrix{
 & (v_0,z) \ldots (v_3,z) & (a,x) & (a,y) & (b,x) & (b,y) & (c,x) & (c,y) & (a,z) & (b, z) & (c, z) \\
v_i a, ~  0\leq i \leq 3 & I_4  & A_x & A_y& 0_{4 \times 1}& 0_{4 \times 1}& 0_{4 \times 1}& 0_{4 \times 1}& -1_{4 \times 1}& 0_{4 \times 1}& 0_{4 \times 1} \\
v_i b, ~ 0\leq i \leq 3 & -I_4 & 0_{4 \times 1} & 0_{4 \times 1} & B_x & B_y & 0_{4 \times 1}& 0_{4 \times 1}& 0_{4 \times 1}& 1_{4 \times 1}& 0_{4 \times 1} \\
v_i c,  ~0\leq i \leq 3 & -I_4 & 0_{4 \times 1} & 0_{4 \times 1} & 0_{4 \times 1}& 0_{4 \times 1}& C_x & C_y & 0_{4 \times 1}& 0_{4 \times 1}& 1_{4 \times 1} \\
}.
\end{align*}
(we order the rows $(v_0,a), \ldots, (v_3,a)$, etc.)
where $I_4$ is the $4 \times 4$ identity matrix and
\begin{align*}
A_x := \left[\begin{array}{c}
0 \\
0 \\
1 \\
-\gamma^{q-1}
\end{array}\right] ,
\qquad
A_y := \left[\begin{array}{c}
0 \\
-1 \\
0 \\
-\gamma^{q-1}
\end{array}\right]= C_y, \\
B_x := \left[\begin{array}{c}
1 \\
1 \\
2^{q-1} \\
(1-\gamma)^{q-1}
\end{array}\right]= C_x,
\qquad
B_y := \left[\begin{array}{c}
1 \\
0 \\
1 \\
(1-\gamma)^{q-1}
\end{array}\right] \, .
\end{align*}
By applying row operations to $M$ we obtain a $12 \times 13$ matrix of the form
\begin{align*}
\left[\begin{array}{cc}
I_4&* \\
0_{8 \times 4} & N
\end{array}\right],
\end{align*}
where $N$ is the $8 \times 9$ matrix
\begin{align*}
N: = \left[\begin{array}{ccccccccc}
A_x & A_y & B_x & B_y & 0_{4 \times 1}& 0_{4 \times 1}& -1_{4 \times 1}& 1_{4 \times 1}& 0_{4 \times 1} \\
A_x & A_y & 0_{4 \times 1}& 0_{4 \times 1}& C_x & C_y & -1_{4 \times 1}& 0_{4 \times 1}& 1_{4 \times 1} \\
\end{array}\right],
\end{align*}
and we note that the rows of $N$ are linearly independent if and only if the rows of $M$ are linearly  independent.
By adding the seventh and ninth columns to the eighth column
followed by subtracting the first four rows of $N$ from the last four rows of $N$ 
(i.e. subtract the first from the fifth, the second from the sixth, etc.)
we obtain
\begin{align*}
\left[\begin{array}{ccccccccc}
A_x & A_y & B_x & B_y & 0_{4 \times 1}& 0_{4 \times 1}& -1_{4 \times 1}& 0_{4 \times 1}& 0_{4 \times 1} \\
0_{4 \times 1} & 0_{4 \times 1} & -B_x & -B_y & C_x & C_y & 0_{4 \times 1}& 0_{4 \times 1}& 1_{4 \times 1} \\
\end{array}\right] \, .
\end{align*}
We may remove the eighth column to obtain the $8 \times 8$ matrix
\begin{align*}
O: = \left[\begin{array}{cccccccc}
0 & 0 & 1 & 1 & 0 & 0 & -1 & 0 \\
0 & -1 & 1 & 0 & 0 & 0 & -1 & 0 \\
1 & 0 & 2^{q-1} & 1 & 0 & 0 & -1 & 0 \\
-\gamma^{q-1} & -\gamma^{q-1} & (1-\gamma)^{q-1} & (1-\gamma)^{q-1} & 0 & 0 & -1 & 0 \\
0 & 0 & -1 & -1 & 1 & 0 & 0 & 1 \\
0 & 0 & -1 & 0 & 1 & -1 & 0 & 1 \\
0 & 0 & -2^{q-1} & -1 & 2^{q-1} & 0 & 0 & 1 \\
0 & 0 & -(1-\gamma)^{q-1} & -(1-\gamma)^{q-1} & (1-\gamma)^{q-1} & -\gamma^{q-1} & 0 & 1
\end{array}\right]
\end{align*}
and note that the rows of $N$ are linearly independent if and only if the rows of $O$ are linearly  independent.
By subtracting the first row from the second, third and fourth rows, and by subtracting the fifth row from the sixth, seventh and eighth rows,
followed by deleting the first and fifth rows and the last two columns,
we obtain the $6 \times 6$ matrix
\begin{align*}
P: = \left[\begin{array}{cccccc}
0 & -1 & 0 & -1 & 0 & 0 \\
1 & 0 & 2^{q-1}-1 & 0 & 0 & 0 \\
-\gamma^{q-1} & -\gamma^{q-1} & (1-\gamma)^{q-1} -1 & (1-\gamma)^{q-1} -1& 0 & 0 \\
0 & 0 & 0 & 1 & 0 & -1  \\
0 & 0 & -2^{q-1} +1 & 0 & 2^{q-1}-1 & 0 \\
0 & 0 & -(1-\gamma)^{q-1} +1 & -(1-\gamma)^{q-1} +1 & (1-\gamma)^{q-1} -1 & -\gamma^{q-1} 
\end{array}\right]
\end{align*}
and, as $\det P=\det O$, $O$ is invertible if and only if $P$ is invertible. 
By subtracting the second column of $P$ from the fourth, adding the sixth column of $P$ to the fourth,
and then deleting the second and sixth columns and the first and fourth rows,
we obtain the $4 \times 4$ matrix
\begin{align*}
Q: = \left[\begin{array}{cccc}
1 &  2^{q-1}-1 & 0 & 0  \\
-\gamma^{q-1}  & (1-\gamma)^{q-1} -1 & (1-\gamma)^{q-1} -1+\gamma^{q-1}& 0  \\
0  & -2^{q-1} +1 & 0 & 2^{q-1}-1  \\
0 & -(1-\gamma)^{q-1} +1 & -(1-\gamma)^{q-1} +1 -\gamma^{q-1} & (1-\gamma)^{q-1} -1  
\end{array}\right]
\end{align*}
and, as $\det Q = -\det P$, $P$ is invertible if and only if $Q$ is invertible. 
By adding the fourth column of $Q$ to the second and then deleting the third row and fourth columns,
we obtain the $3 \times 3$ matrix
\begin{align*}
R: = \left[\begin{array}{ccc}
1 &  2^{q-1}-1 & 0   \\
-\gamma^{q-1}  & (1-\gamma)^{q-1} -1 & (1-\gamma)^{q-1} -1+\gamma^{q-1}  \\
0 & 0 & -(1-\gamma)^{q-1} +1 -\gamma^{q-1}  
\end{array}\right].
\end{align*}
Since $\det R = (1-2^{q-1}) \det Q$ and $2^{q-1} \neq 1$,
$Q$ is invertible if and only if $R$ is invertible.% 

We now calculate that
\begin{align*}
\det R = ((1 -\gamma^{q-1})-(1-\gamma)^{q-1})((1-\gamma)^{q-1} +(2\gamma)^{q-1} - (1 +\gamma^{q-1})),
\end{align*}
thus $R$ is not invertible if and only if either $1 -\gamma^{q-1} = (1-\gamma)^{q-1}$ or 
\begin{align}\label{eqn:k7-k3}
(2\gamma)^{q-1} - \gamma^{q-1} + (1-\gamma)^{q-1} -1 = (2^{q-1} - 1)\gamma^{q-1} + (1-\gamma)^{q-1} - 1 =0
\end{align}
By Lemma \ref{l:powerinequality},
as $q  \neq 2$ and $1 >\gamma$,
the first equality cannot hold,
thus $R$ is invertible if and only if Equation (\ref{eqn:k7-k3}) does not hold.

Consider the continuous function $f:\mathbb{R} \rightarrow \mathbb{R}$ with
\begin{align*}
f(x) := (2^{q-1} - 1)x^{q-1} + (1-x)^{q-1} - 1.
\end{align*}
Note that $f(1) = 2^{q-1}-2 \neq 0$, as $q \neq 1$,
and so we can choose $\gamma \in (0,1)$ such that $f(\gamma) \neq 0$.
Thus Equation \ref{eqn:k7-k3} does not hold and $R$ is invertible.
This now implies that $R(G,p)$ has linearly independent rows,
thus $K_7 - K_3$ is independent in $\ell_q^3$.
Since $f_3(K_7 - K_3) = 3$ also,
we have that $K_7 - K_3$ is minimally rigid in $\ell_q^3$.
\end{proof}

\begin{theorem}\label{thm:projplane}
Let $G=(V,E)$ be a triangulation of the projective plane.
Then $G$ is minimally rigid in $\ell_q^3$ for all $q\in (1,\infty)$, $q\neq 2$.
\end{theorem}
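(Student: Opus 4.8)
The plan is to run an induction on Barnette's recursive construction (Proposition~\ref{prop:projplane}), with $K_6$ and $K_7-K_3$ as base cases, using the fact that vertex splitting preserves independence (Proposition~\ref{p:split}).

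First I would record the counting fact that any triangulation $G=(V,E)$ of the projective plane satisfies $|E|=3|V|-3$: Euler's formula gives $|V|-|E|+|F|=1$, and each face being a triangle gives $3|F|=2|E|$, whence $f_3(G)=3$. Since $q\neq2$ we have $\dim\T(p)=d=3$ for well-positioned placements, so as soon as we know $G$ is independent in $\ell_q^3$ — i.e.\ there is a placement $p$ with $\rank R(G,p)=|E|=3|V|-3$ — this rank is maximal (the kernel of $R(G,p)$ always contains the $3$-dimensional tangent space $\T(p)$ of trivial infinitesimal flexes, so $\rank R(G,p)\leq 3|V|-3$), hence $(G,p)$ is regular and $\rank R(G,p)=3|V|-\dim\T(p)$, so $(G,p)$ is rigid by Proposition~\ref{p:rigid}(iii). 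Thus $(G,p)$ is minimally rigid, and it suffices to prove that every triangulation of the projective plane is independent in $\ell_q^3$.

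For the base cases, $K_6=K_{2\cdot 3}$ is minimally rigid (hence independent) in $\ell_q^3$ by Corollary~\ref{c:K2d}(ii), and $K_7-K_3$ is minimally rigid (hence independent) in $\ell_q^3$ by Lemma~\ref{lem:k7-k3}. By Proposition~\ref{prop:projplane}, $G$ is obtained from one of these by a sequence of topological vertex splittings, each staying within the class of triangulations of the projective plane. I would then check that a topological vertex split of a triangulated surface is exactly a $3$-dimensional vertex split in the sense of Definition~\ref{vertexsplitdef}: the link of the split vertex $v$ is a cycle, and choosing the two ``hinge'' vertices $v_1,v_2$ on this cycle partitions the disc around $v$ into two pieces, producing new vertices $v_0,w_0$ joined by an edge and with common neighbours exactly $\{v_1,v_2\}$ — the $d-1=2$ shared neighbours demanded by the definition — while the remaining link vertices are distributed between $v_0$ and $w_0$. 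Proposition~\ref{p:split} then shows independence in $\ell_q^3$ is preserved by each such move, so by induction $G$ is independent in $\ell_q^3$, and by the first paragraph $G$ is minimally rigid in $\ell_q^3$.

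The main obstacle has, in effect, already been overcome upstream: the analytic heavy lifting sits in Lemma~\ref{lem:k7-k3} (the explicit $K_7-K_3$ determinant, resting on the power inequality Lemma~\ref{l:powerinequality}) and in Proposition~\ref{p:split} (independence under vertex splitting), both already proved. The only genuine point to be careful about here is the dictionary between the topological vertex splitting of Steinitz and Barnette and the combinatorial $3$-dimensional vertex split of Definition~\ref{vertexsplitdef} — specifically, that the two hinge vertices of the topological split coincide with the $d-1$ common neighbours of $v_0$ and $w_0$ required by the definition — which is routine but should be stated explicitly.
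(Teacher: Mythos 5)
Your proposal is correct and follows essentially the same route as the paper: induction along Barnette's recursive construction with base cases $K_6$ (Corollary \ref{c:K2d}(ii)) and $K_7-K_3$ (Lemma \ref{lem:k7-k3}), using Proposition \ref{p:split} to carry (minimal) rigidity through the vertex splits. Your extra details — the Euler-count $|E|=3|V|-3$ deriving rigidity from independence, and the explicit matching of topological vertex splitting with the combinatorial $3$-dimensional vertex split of Definition \ref{vertexsplitdef} — are correct elaborations of steps the paper leaves implicit.
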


\begin{proof}
We prove the result by induction on $|V|$. Corollary \ref{c:K2d}(ii) shows that $K_6$ is minimally rigid in $\ell_q^3$ and Lemma \ref{lem:k7-k3} shows that $K_7-K_3$ is minimally rigid in $\ell_q^3$. 
Let $G=(V,E)$ be a triangulation of the projective plane. Proposition \ref{prop:projplane} shows that $G$ can be generated from $K_6$ or $K_7-K_3$ by topological vertex splitting operations. 
We can now apply Proposition \ref{p:split} to show that $G$ is minimally rigid in $\ell_q^3$ completing the proof. 
\end{proof}

\end{document}